\newcommand{\Q}{ {\mathbb Q} }
\newcommand{\R}{ {\mathbb R} }
\newcommand{\C}{ {\mathbb C} }
\newcommand{\Z}{ {\mathbb Z} }
\newcommand{\N}{ {\mathbb N} }
\newcommand{\PSU}{\mathrm{PSU}}
\newcommand{\PGL}{\mathrm{PGL}}
\newcommand{\PP}{ {\mathbb P} }
 \newcommand{\gr}{ {\mathrm{gr}} }
\newcommand{\Hom}{ \mathrm{Hom}}
\newtheorem{theorem}{Theorem}[section]
\newtheorem{proposition}[theorem]{Proposition}
\newtheorem{definition}[theorem]{Definition}
\newtheorem{corollary}[theorem]{Corollary}
\newtheorem{prop-def}[theorem]{Proposition-Definition}
\newtheorem{remark}[theorem]{Remark}
\newtheorem{lemma}[theorem]{Lemme}
\title{On models of orbit configuration spaces of surfaces}
\author{Mohamad Maassarani}
 \address{IRMA, Université de Strasbourg, 7 rue René Descartes, 67084
Strasbourg, France}
\email{maassarani@math.unistra.fr}
\begin{document}

\maketitle 
\begin{abstract}
We consider orbit configuration spaces $C_n^G(S)$, where $S$ is obtained by removing a finite number of points (eventually none) to a compact orientable boundaryless surface $\bar{S}$ and $G$ is a finite group acting freely continuously on $S$. We prove that the fibration $\pi_{n,k} : C_{n}^G(S) \to C_k^G(S)$ obtained by projecting on the first $k$ coordinates is a rational fibration. As a consequence, the space $C_{n}^G(S)$ has a Sullivan model $A_{n,k}=\Lambda V_{C_k^G(S)}\otimes \Lambda V_{C_{n-k}^G(S_{G,k})}$ fitting in a cdga sequence: $\Lambda V_{C_k^G(S)}\to A_{n,k} \to \Lambda V_{C_{n-k}^G(S_{G,k})},$ where $\Lambda V_X$ denotes the minimal model of $X$, and $C_{n-k}^G(S_{G,k})$ is the fiber of $\pi_{n,k}$. We show that this model is minimal except for some cases when $S\simeq S^2$ and compute in all the cases the higher $\psi$-homotopy groups (related to the generators of the minimal model in degree greater than $2$) of $C_n^G(S)$. We deduce from the computation that the space $C_n^G(S)$ having finite Betti numbers is a rational $K(\pi,1)$, i.e its minimal model and $1$-minimal model are the same (or equivalently the $\psi$-homotopy space vanishes in degree grater then $2$), if and only if $S$ is not homeomorphic to $S^2$. In particular, for $S$ not homeomorphic to $S^2$, the minimal model (isomorphic to the $1$-minimal model) is entirely determined by the Malcev Lie algebra of $\pi_1 C_n^G(S)$.  When $A_{n,k}$ is minimal, we get an exact sequence of Malcev Lie algebras $0\to L_{C_{n-k}^G(S_{G,k})}\to L_{C_{n}^G(S)}\to L_{C_k^G(S)}\to 0$, where $L_X$ is the Malcev Lie algebra of $\pi_1X$.  For $S \varsubsetneq \bar{S}=S^2$ and $G$ acting by orientation preserving homeomorphism, we prove that the cohomology ring of $C_n^G(S)$ is Koszul, and that for the spaces of this type studied in \cite{MM} the minimal model can be obtained out of a Cartan-Chevally-Eilenberg construction applied to a graded Lie algebra given in loc. cit..
\end{abstract}

\section*{Introduction}
\subsection*{Context and main results}
For $M$ a topological space, $H$ a group acting by homeomorphisms on $M$ and $n \geq 1$ an integer, the orbit configuration space of $n$ ordered points
of $M$ (associated to the action of $G$) is the space:
$$C_n^H(M)=\{(p_1,\dots,p_n)\in M^n \vert p_i\notin H \cdot p_j \}.$$
They are generalizations of the classical configuration space $C_n(M)$ of $n$ ordered points of $M$ (corresponding to the case $G=\{1\}$). Their topology and their algebraic invariants are widely studied.\\\\
A model $A$ of a topological space $X$ is a differential graded commutative algebra (cdga) quasi-isomorphic to the cdga $A_{PL}(X)$ of polynomial differential forms of $X$. A model is called minimal (Sullivan) model if the cdga is a free graded commutative algebra (i.e. of the form $\Lambda V_X$, with $V_X$ graded) and the differential is decomposable. The minimal model is unique up to "isomorphism" and in \cite{HarRF}, the author uses the notion $\psi$-homotopy space of $X$ to designate the graded vector space $V_X$. A weaker, invariant of $X$ is its $1$-minimal model: a free cdga $\Lambda V_{X,1}$ with $V_{X,1}$ concentrated in degree $1$ and a morphism $A\to A_{PL}(X)$ inducing in cohomology an isomorphism in degree $1$ and an monomorphism in degree $2$. If the first cohomology group of $X$ (or equivalently of $A_{PL(X)}$) is finite dimensional the Malcev Lie algebra $L_X$ of $\pi_1X$ can be constructed out of the $1$-minimal model of $X$ and conversely. A path connected topological space $X$ with finite Betti numbers is a rational $K(\pi,1)$ if its $1$-minimal model is isomorphic to its minimal model or equivalently the $\psi$-homotopy space of $X$ is null in degree $k\geq 2$ (Cf. Subsection \ref{S12} for more details). In particular, the minimal model is determined by the Malcev Lie algebra of the fundamental group.\\\\
Models for configuration spaces $C_n(M)$, with $M$ a smooth compact complex projective variety, were computed in \cite{FM} and then simplified in \cite{Kriz}. These models were used in \cite{Bezr}, where the author proves that the Malcev Lie algebra $L_{C_n(M)}$ of $\pi_1C_n(M)$, for $M$ a closed (compact and boundaryless) orientable surface, is the degree completion of a given graded Lie algebra $L(n,M)$, and gives the minimal Sullivan model of $C_n(M)$ in terms of a variant of the Cartan-Chevally-Eilenberg construction applied $L(n,M)$. In fact, configuration spaces of such surfaces are rational $K(\pi,1)$ spaces.\\\\
Earlier, it has been proved in \cite{FMmin}, fiber-type arrangements, that share some common topological properties with configuration spaces, are rational $K(\pi,1)$ spaces and models of such spaces are given. A general result, relating the Kozsulity of the cohomology ring of a formal spaces to the rational $K(\pi,1)$ property, is proved in \cite{PapYuz}. In \cite{BibbyHil}, it is shown that the complement $Y$ of arrangements associated to chordal graphs and closed orientable surfaces of positive genus are rational $K(\pi,1)$, and the minimal model and the Malcev Lie algebra of $Y$ are computed. The family of such spaces includes configuration spaces over the corresponding surfaces (those studied in \cite{Bezr}). In an operadic context, models for configuration spaces of framed points of surfaces are considered in \cite{IdCamWill}.\\\\
Here we study orbit configuration spaces $C_n^G(S)$, where $G$ is a finite group acting freely on a surface $S$ obtained by removing a finite number of points (eventually none) to a closed orientable surface $\bar{S}$. In \cite{DCoh} and \cite{BE}, the authors compute using different approaches the Malcev Lie algebra of $\pi_1C_n^{\mu_p}(\C^\times)$, where $\mu_p$ is the group of $p$ roots of the unity acting by multiplication on $\C^\times$. The cohomology ring of the orbit configuration space $C_n^{\Z/2\Z}(S^2)$ where the action is the antipodal action is studied in \cite{XicoAnt1} and \cite{ziegAnt} (see also \cite{XicoAnt2}). The Malcev Lie algebra of $\pi_1 C_n^{E_p}(E)$ where $E$ is an elliptic curve and $E_p$ is its group of $p$-torsion points acting naturally on $E$ is given in \cite{CalMart}. The orbit configuration spaces $C_n^G(S)$ for $\bar{S}=S^2$ and $G$ a group acting by orientation preserving homeomorphism are studied in \cite{MM} and \cite{MM3}. It is shown that they are formal spaces, the  cohomology ring and (for some cases) the Malcev Lie algebra are computed. One can also find in the literature (\cite{XicoCohGauss}, \cite{CKX}, \cite{casto}), orbit configuration spaces associated to infinite (geometric) group actions on $\C$ and the hyperbolic upper half plane.\\\\
\textbf{Main results}
By a general result the projection on the first $k$ coordinates $\pi_{n,k} : C_{n}^G(S) \to C_k^G(S)$ for $n>k$ is a locally trivial fibration. The fiber is the orbit configuration space $C_{n-k}^G(S_{G,k})$, where $S_{G,k}$ is $S$ with $k$ orbits removed. We show that: 
\begin{itemize}
\item[1)] The fibration $\pi_{n,k} : C_{n}^G(S) \to C_k^G(S)$ is a rational fibration. In particular, the space $C_{n}^G(S)$ has a Sullivan model $A_{n,k}=\Lambda V_{C_k^G(S)}\otimes \Lambda V_{C_{n-k}^G(S_{G,k})}$ fitting in a cdga sequence: $$\Lambda V_{C_k^G(S)}\to A_{n,k} \to \Lambda V_{C_{n-k}^G(S_{G,k})},$$ where $\Lambda V_X$ denotes the minimal model of $X$, the first morphism is $\mathrm{id}\otimes 1$ and the second is obtained by applying the augmentation map of $\Lambda V_{C_k^{G}(S)}$. 
\item[2)] The Sullivan model $A_{n,k}$ of $C_{n}^G(S)$ is minimal except for some cases when $S\simeq S^2$.
\item[3)] If $S$ is not homeomorphic to $S^2$, then $C_n^G(S)$ is a rational $K(\pi,1)$ space, i.e. the 1-minimal model and the minimal model of $C_n^G(S)$ are isomorphic, and hence they are determined by the Malcev Lie algebra $L_{C_n^G(S)}$ of $\pi_1 C_n^G(S)$.
\item[4)] If $S\simeq S^2$, then up to equivalence $C_n^G(S)$ is $C_n(S)$ or $C_n^{\Z/2\Z}(S^2)$, where $\Z/2\Z$ is the group generated by the antipodal map and the $\psi$-homtopy space in degree $k\geq 2$ is equal to $(\Q)_3$ if $G=\{1\}$ and $n\geq 3$ or $G\neq \{1\}$ and $n\geq 2$, and to  $(\Q)_2\oplus (\Q)_3$ otherwise (subscripts correspond to the degree). In particular, the orbit configuration spaces $C_n^G(S^2)$ are not rational $K(\pi,1)$ spaces.
\item[5)] When $A_{n,k}$ is minimal, there is an exact sequence of Malcev Lie algebras:
$$0\to L_{C_{n-k}^G(S_{G,k})}\to L_{C_{n}^G(S)}\to L_{C_k^G(S)}\to 0,$$
where $L_X$ is the Malcev Lie algebra of $\pi_1X$, and in all cases $L_{C_{n}^G(S)}$ is obtained form $L_S$ by iterated extensions by complete free Lie algebras.
\end{itemize}
For $\bar{S}=S^2$ and $G$ acting by orientation preserving homeomorphisms, we prove that:
\begin{itemize}
\item[6)] The minimal models of the spaces studied in \cite{MM} (except one case) can be obtained from a Cartan-Chevally-Eilenberg construction applied to graded Lie algebras from loc. cit..
\item[7)] If $G\neq\{1\}$ ($S\varsubsetneq S^2$, otherwise there is no such free actions), the cohomology ring of the orbit configuration space $C_n^G(S)$ is Koszul.
\end{itemize} 
The existence of the model for $A_{n,k}$ in $(1)$, is just an application of a general construction known for fibrations that are rational fibrations. Result $(2)$ for $S$ not homeomorphic to $S^2$ and $(3)$ are not hard to prove using an induction. For $S=S^2$, $(2)$ is obtained by combining the existence of cross-sections to a criterion from \cite{HarRF}. The $\psi$-homotopy groups in $(4)$ are obtained by induction using $(2)$ and some technical elements. The exact sequence in $(5)$ is derived from the sequence in $(1)$ using general arguments. In general, the $1$-minimal model of $X$, when $H^1(X,\Q)$ is finite dimensional, can be recovered as an inverse limit of Cartan-Chevally-Eilenberg cdga's associated to quotients of $L_X$. For the spaces in $(6)$, the Malcev Lie algebra is the degree completion of a graded Lie algebra and it is not hard to describe the inverse limit mentioned before. This is a general argument that also applies for other orbit configuration spaces (Cf. end of subsection \ref{S42}) and can be compared to a result of \cite{Bezr} for $C_n(S_g)$ where $S_g$ is a closed orientable surface of genus $g\geq 1$. Finaly result $(7)$, can be obtained by combining $(3)$ and a result from \cite{PapYuz}, or by applying an argument from \cite{Prid} to the cohomology ring computed in \cite{MM3}.
\subsection*{Outline of the paper}
The first section contains reminders on (1-)minimal models, $\psi$-homotopy groups, rational $K(\pi,1)$ spaces, rational fibrations, their models, and criterions for rationality of fibrations.\\\\
In section \ref{S2}, we recall the definition of an orbit configuration space and a fibration theorem. We prove main result $(1)$ and other results that will be used in section \ref{S3}. \\\\
In section \ref{S3}, we prove main result $(2)$, the part of main result $(3)$ not concerning Malcev Lie algebras and main result $(4)$. In section \ref{S4}, we recall some material on Malcev Lie algebras and their relation between minimal models ($1$-minimal models) and Malcev Lie algebras. This establishes the part of $(3)$ concerning Malcev Lie algebras. We then prove main results $(5)$ and $(6)$. Two proofs of main result $(7)$ are discussed section \ref{S5}.\\\\
One can construct Malcev completions using different techniques and it is known that they are all equivalent. We prove that the construction using complete Hopf algebras from \cite{Q1} and the one using fundamental groups of minimal models from \cite{FelHar2} are equivalent. We use this correspondence and general arguments to prove a result used in section \ref{S4}. 

\renewcommand{\abstractname}{\textbf{Acknowledgments}}
\begin{abstract}
The author warmly thanks the "Institut de Recherche Mathématique Avancée" in Strasbourg, for its hospitality.
\end{abstract}
\tableofcontents
\section{Reminders}\label{S1}
In the first $3$ subsection, we remind elements on: models, minimal models, rational $K(\pi,1)$ spaces, and models for fibrations and rational fibrations. Most of the material can be found in \cite{FelHar} and a reference is given when the material is taken elsewhere. In the last section, we recall a criterion for rationality of fibrations under a nilpotence assumption on the action of the fundamental group of the base on the cohomology of the fiber, and consider related actions. This is done to simplify the exposition of section \ref{S2}. 
\subsection{Models in the cdga category}
By a commutative cdga (over $\Q$) we mean a positively graded differential graded commutative $\Q$-algebra, i.e. a graded unital commutative $\Q$-algebra $A=\oplus_{i \geq 0}A^i$ (with the Koszul sign convention) equipped with a graded map $d$ of degree +1 (the differential) satisfying the derivation rule:
$$d(ab)=d(a)b+(-1)^{\mathrm{deg}(a)\mathrm{deg}(b)}a d(b),$$ for $a$ and $b$ homogeneous elements of degree $\mathrm{deg}(a)$ and $\mathrm{deg}(b)$ respectively. We will designate a cdga by the couple $(A,d)$ if necessary or simply by $A$ if the context is clear. The cohomolgy algebra of $A$ will be denoted by $H^*(A)$, and $A$ is called connected if $H^0(A)=\Q$. A cdga morphism $f:A\to B$ between two cdga's $A$ and $B$ is a morphism of graded algebra respecting the differentials. Its associated morphism in cohomology will be denoted by $H^*(f)$. The cdga morphism $f$ is called quasi-isomorphism if $H^*(f)$ is an isomorphism. An augmented cdga is a cdga $A$ with a cdga morphism $\varepsilon_A: A\to \Q$ (the augmentation map) where $\Q$ is concentrated in degree $0$. A morphism of augmented cdga is a cdga morphism respecting the augmentations. \\\\
Let $V=\oplus_{i\in \N^*} V^i$ be a graded vector space. The free commutative algebra $\Lambda V$ on $V$ is the quotient of the graded tensor algebra $T(V)$ by the ideal generated by the elements $ab+(-1)^{\mathrm{deg}(a)\mathrm{deg}(b)}ba$, for $a$ and $b$ homogeneous elements of $V$, with grading induced by the one over $V$. The natural augmentation on $T(V)$ induces an augmentation on $\Lambda V$. We denote by $\Lambda^+ V$ the kernel of the augmentation map. For $V'$ strictly positively graded $\Lambda (V\oplus V')$ is isomorphic to the tensor product $\Lambda V \oplus \Lambda V'$ (tensor with the Koszul sign convention). \\\\
A Sullivan algebra is a cdga $(\Lambda V ,d)$ ($V=\oplus_{i\in \N^*} V_i$) with $V = \cup_{k=0}^\infty V(k)$, where $V(0) \subset V(1) \subset \cdots$ is an increasing sequence of graded subspaces such that
$dV(0)=0$ and $dV(k) \subset \Lambda V(k - 1)$, for $k> 1$. Such algebra is called minimal if $dV \subset \Lambda^+ V \cdot\Lambda^+V$.\\\\
A relative Sullivan algebra is a cdga $B\otimes \Lambda V$ where $B\otimes 1$ is a connected sub-cdga, $V=\oplus_{i\in \N^*}$, and $V = \cup_{k=0}^\infty V(k)$, where $V(0) \subset V(1) \subset \cdots$ is an increasing sequence of graded subspaces such that
$dV(0)\subset B$ and $dV(k) \subset B\otimes \Lambda V(k - 1)$, for $k\geq 1$. Such a cdga is called minimal if $\mathrm{Im}(d) \subset B^+ \otimes \Lambda V + B \otimes (\Lambda^+ V \cdot\Lambda^+V)$, where $B^+=\oplus_{i\geq 1} B^i$. We will identefy $B$ with $B\otimes 1$ and $1\otimes \Lambda V$ with $\Lambda V$ throughout the rest of the text. Sullivan algebras defined above are relative Sullivan algebras with $B=\Q$.\\\\
An augmentation $\varepsilon : B \to \Q$ gives an augmentation for the relative Sullivan algebra $B\otimes \Lambda V$, since $\Lambda V$ is naturally augmented. Moreover, composing the restriction of $d$ to $\Lambda V$ with $\varepsilon \otimes \mathrm{id}$ we obtain a differential $\bar{d}$ for which $\Lambda V$ is a Sullivan algebra, and $\varepsilon\otimes \mathrm{id}$ gives and cdga morphism $\varepsilon\cdot 1 :(B\otimes \Lambda V,d) \to (\Lambda V,\bar{d})$. If $B\otimes V$ is minimal then so is $(\Lambda V, \bar{d})$. The morphism $\varepsilon \cdot 1$ will always be regarded as a cdga morphism with respect to the differential $d$ and $\bar{d}$.
\begin{definition}
\begin{itemize}
\item[1)] A Sullivan model of a morphism $\varphi: B \to C$ of connected cdga's is a pair $(B\otimes \Lambda V, m_B)$, where $B\otimes \Lambda V$ is a relative Sullivan algebra and $m_B: B \otimes \Lambda V \to C$ is a quasi-isomorphism extending $\varphi$. The Sullivan model is called minimal if $B\otimes \Lambda V$ is minimal.
\item[2)] A Sullivan model of a connected cdga $A$ is a pair $(\Lambda V,m)$, where $\Lambda V$ is a Sullivan algebra and $m: \Lambda V \to A$ is a quasi-isomorphism. We say that $(\Lambda V,m)$ is minimal if $\Lambda V$ is minimal.
\end{itemize}
\end{definition}
A minimal model of a cdga is a minimal model of the inclusion of $\Q$ into the corresponding cdga. Note that if $A$ is augmented then $m$ is a morphism of augmented cdga's. The same is true for $m_B$ if $\varphi$ is a map of augmented cdga's. Sullivan models (resp. minimal models) defined here (following \cite{FelHar}) are also models (resp. minimal models) in the sense of \cite{HarRF} (p. 202-203) where the author uses a kind of well-ordered basis for the definition of a model and a "degree condition" to define the notion of minimal model. One derives from the filtration here the well-ordered basis. The "degree condition" is equivalent to the "minimality" condition here as "noted" in \cite{HarRF} when the definition of minimal KS complex is introduced (p. 202).
\begin{remark}
We will somtimes designate a minimal model $(\Lambda V ,m)$ simply by $\Lambda V$.
\end{remark}
\begin{proposition}
Take $A,B,C$ and $\varphi$ as in the previous definition.
\begin{itemize}
\item[1)] If $H^1(\varphi)$ is a monomorphism then $\varphi$ admits a minimal model. Moreover, if $(B\otimes \Lambda V,m_B)$ and $(B \otimes \Lambda V', m'_B)$ are such models, then we have a cdga isomorphism $B\otimes \Lambda V \to B\otimes \Lambda V'$ extending $\mathrm{id}_B$.
\item[2)] Every connected cdga $A$ admits a minimal model. Moreover, If $(\Lambda V,m)$ and $(\Lambda V',m')$ are such models, then $ \Lambda V$ and $\Lambda V'$ are isomorphic Sullivan algebras.
\end{itemize}
\end{proposition}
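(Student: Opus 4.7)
The plan is to treat the proposition as an instance of Sullivan's classical existence/uniqueness result for minimal models, since (2) follows from (1) by taking $B=\Q$ and $\varphi:\Q\hookrightarrow A$ the unit (the map $H^1(\varphi):0\to H^1(A)$ is trivially a monomorphism). Hence I would concentrate on (1), where $\varphi:B\to C$ is a morphism of connected cdga's with $H^1(\varphi)$ injective.

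For the existence of a minimal model $(B\otimes\Lambda V,m_B)$, I would build $V=\bigcup_{k\geq 0}V(k)$ inductively together with an extension $m_B$ of $\varphi$. Starting with $V(0)$, I would first add a graded vector space $V(0)^{\geq 1}$ in each degree so that $H^{\geq 2}(m_B)$ becomes surjective at the generator level and $H^1(m_B)$ hits a complement of the image of $H^1(\varphi)$; the differential on $V(0)$ is set to $0$. Because $H^1(\varphi)$ is injective, one does not need to introduce generators to kill classes of $H^1(B)$ that become trivial in $C$, which is exactly the obstruction that would otherwise force generators in degree $0$. Then inductively, assuming $B\otimes\Lambda V(k)$ and a cdga map $m_B^{(k)}$ have been constructed, I would enlarge $V(k)$ to $V(k+1)$ by adjoining a basis of the relevant graded piece of $\ker H^*(m_B^{(k)})$ in strictly positive degree and a basis of a complement of $\mathrm{Im}\,H^*(m_B^{(k)})$, extending the differential so that each new generator either kills a targeted cocycle or maps to a prescribed cocycle. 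Since at each step new generators have differentials landing in $B\otimes\Lambda V(k)$ and are chosen so that the differential has no linear part in $V$, minimality in the sense $\mathrm{Im}(d)\subset B^+\otimes\Lambda V+B\otimes(\Lambda^+V\cdot\Lambda^+V)$ is preserved. Passing to the union gives the required minimal relative Sullivan model.

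For uniqueness, given two minimal models $(B\otimes\Lambda V,m_B)$ and $(B\otimes\Lambda V',m'_B)$, I would invoke the lifting property for (relative) Sullivan algebras against quasi-isomorphisms: since $m'_B$ is a surjective-in-cohomology extension of $\varphi$, one constructs step by step on the filtration $V(k)$ a cdga morphism $\Phi:B\otimes\Lambda V\to B\otimes\Lambda V'$ extending $\mathrm{id}_B$ with $m'_B\circ\Phi$ homotopic to $m_B$ relative to $B$. The obstructions live in relative cohomology groups that vanish precisely because $m'_B$ is a quasi-isomorphism and the generators of $V(k)$ have prescribed differential in $B\otimes\Lambda V(k-1)$. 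Then $\Phi$ is a quasi-isomorphism between two minimal relative Sullivan algebras over $B$, and the standard argument (look at the associated graded with respect to $V$ and apply that a quasi-isomorphism between minimal Sullivan algebras induces an isomorphism on the indecomposables $V\to V'$) shows that $\Phi$ is in fact a cdga isomorphism extending $\mathrm{id}_B$.

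I expect the delicate step to be the degree-one part of the existence proof, which is why the hypothesis $H^1(\varphi)$ injective is imposed: without it one would be forced to add indecomposable elements in degree $0$ (or elements whose differentials have a linear part along $V$) in order to create kernel in $H^1$, and this destroys the minimality condition and the positive grading convention for $V$. Once this degree-one issue is bypassed by the hypothesis, the induction in higher degrees is standard, and uniqueness reduces to the lifting and rigidity statements for minimal Sullivan algebras, themselves consequences of the fact that $d$ has no linear part so the induced maps on indecomposables are controlled by the cohomology of the underlying cdga's.
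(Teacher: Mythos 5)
Your proposal is the standard Félix--Halperin--Thomas argument (reduction of (2) to (1) with $B=\Q$, existence by inductive adjunction of generators using $H^1(\varphi)$ injective to avoid degree-zero generators, uniqueness via the lifting property plus the fact that a quasi-isomorphism of minimal relative Sullivan algebras over $B$ extending $\mathrm{id}_B$ is an isomorphism), which is precisely what the paper relies on: it states the proposition without proof, deferring to \cite{FelHar}. The one step you assert rather than justify is that the differentials of the newly adjoined generators can be \emph{chosen} with no linear part in $V$; in \cite{FelHar} this is circumvented by first constructing an arbitrary relative Sullivan model and then invoking the decomposition of such a model into a minimal factor tensored with a contractible one, but your direct inductive route is also standard once that choice is justified.
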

We note that one can relate $m$ to $m'$ by a homotopy. We omit this in the text.
\begin{remark}
If $V$ is concentrated in degree $1$ then any Sullivan algebra $(\Lambda V,d)$ is minimal.
\end{remark}
%In fact the maps $m$ and $\varphi m'$ are homotopic. But we will not use the notion of homotopy.
\begin{definition}
A $1$-minimal model of a connected cdga $A$ is a Sullivan algebra $(\Lambda V_1, d)$ where $V_1$ is of degree $1$ and a cdga morphism $m_1: \Lambda V_1 \to A$ such that
$H^1(m_1)$ is an isomorphism and $H^2(m_1)$ is a monomorphism.
\end{definition}
The definition of $1$-minimal model given here is equivalent to the one in \cite{MGRH} using Hirsch extensions.
\begin{proposition}[\cite{MGRH}]
Every connected cdga $A$ admits a $1$-minimal model. Moreover, If $\Lambda V_1,m_1$ and $\Lambda V_1',m_1'$ are such models, then the Sullivan algebras $ \Lambda V_1 $ and $\Lambda V_1'$ are isomorphic.
\end{proposition}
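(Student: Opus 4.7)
The plan is to imitate the inductive construction of the Sullivan minimal model, restricted to generators of degree $1$ and adapted to the weaker cohomology matching condition (iso on $H^1$, mono on $H^2$) that defines a $1$-minimal model.

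\textbf{Existence.} I would construct $\Lambda V_1 = \Lambda\bigl(\bigcup_{k\geq 0} V(k)\bigr)$ by induction. To initialise, choose $1$-cocycles $x_\alpha \in A^1$ whose classes form a basis of $H^1(A)$ and set $V(0) = \Q\langle v_\alpha\rangle$ with $dv_\alpha = 0$ and $m_1^{(0)}(v_\alpha)=x_\alpha$; then $H^1(m_1^{(0)})$ is tautologically an isomorphism. Given $m_1^{(k)}:\Lambda V(k)\to A$ with $H^1(m_1^{(k)})$ iso, let $K_k$ be the kernel of $H^2(m_1^{(k)})$, pick $2$-cocycles $z_\beta\in\Lambda V(k)$ representing a basis of $K_k$ together with primitives $a_\beta\in A^1$ satisfying $da_\beta=m_1^{(k)}(z_\beta)$, and form the Hirsch extension $V(k+1)=V(k)\oplus\Q\langle u_\beta\rangle$ with $du_\beta=z_\beta$ and $m_1^{(k+1)}(u_\beta)=a_\beta$. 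A short check, using linear independence of the $[z_\beta]$ in $H^2(\Lambda V(k))$, shows that adjoining such degree-$1$ generators with non-zero differential does not alter $H^1$. Passing to the colimit, any class in $\ker H^2(m_1)$ is represented at some finite stage $k$, hence lies in $K_k$, and is killed in $\Lambda V(k+1)$; thus $H^2(m_1)$ is a monomorphism.

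\textbf{Uniqueness.} Given two $1$-minimal models $m_1:\Lambda V_1\to A$ and $m_1':\Lambda V_1'\to A$, I would build a cdga morphism $\phi:\Lambda V_1\to\Lambda V_1'$ together with a homotopy $H$ between $m_1'\circ\phi$ and $m_1$, by induction on the filtration. On $V(0)$, since $H^1(m_1')$ is an isomorphism, each $v_\alpha$ admits a lift $v'_\alpha\in V'_1$ with $[m_1'(v'_\alpha)]=[m_1(v_\alpha)]$; the discrepancy $m_1'(v'_\alpha)-m_1(v_\alpha)$ is a $1$-coboundary, and its primitive exists in $A^0$ by connectedness of $A$, yielding the initial homotopy. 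For a generator $u\in V(k+1)\setminus V(k)$ with $du=z$, the element $\phi(z)\in\Lambda V_1'$ is a $2$-cocycle whose image under $m_1'$ is exact in $A$ modulo the accumulated homotopy; since $H^2(m_1')$ is a monomorphism, $\phi(z)=du'$ for some $u'\in V'_1$, and we set $\phi(u):=u'$ (correcting $H$ accordingly). A symmetric construction produces $\psi:\Lambda V_1'\to\Lambda V_1$. One then verifies that $H^1(\Lambda V_1)=V(0)$ (degree-$1$ cocycles necessarily lie in the initial level of the filtration, by the same linear-independence argument as in existence), that $\phi$ restricts to an isomorphism $V(0)\to V'(0)$, and that $\psi\phi$ and $\phi\psi$ are cdga isomorphisms by propagating this through the filtration stage by stage.

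\textbf{Main obstacle.} The principal delicate point is the homotopy bookkeeping in the uniqueness step: at each new Hirsch extension one needs $\phi(z)$ to be exact in $\Lambda V_1'$, and establishing this from the sole hypothesis that $H^2(m_1')$ is injective (rather than a genuine quasi-isomorphism, as in the classical Sullivan model argument) forces one to track the homotopy $H$ carefully through every step. Concluding that the resulting $\phi$ is an actual isomorphism, and not merely a cohomology equivalence in low degrees, then relies on the rigidity afforded by the filtration and on the bijectivity of $\phi$ at the level of each $V(k)$, read off inductively from the monomorphism property of $H^2(m_1')$ applied at each Hirsch step.
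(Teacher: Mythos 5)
The paper offers no proof of this statement---it is quoted directly from \cite{MGRH}---and your argument is exactly the standard one from that reference: build $\Lambda V_1$ as a union of Hirsch extensions killing $\ker H^2(m_1^{(k)})$ at each stage, then compare two such models by an obstruction-theoretic induction that uses injectivity of $H^2(m_1')$ in place of the usual lifting lemma (which is unavailable since $m_1'$ is not a quasi-isomorphism). The sketch is correct; the one step I would insist you write out fully is the last one, namely that a cdga morphism between minimal Sullivan algebras generated in degree $1$ inducing an isomorphism on $H^1$ and a monomorphism on $H^2$ is an isomorphism, which is most cleanly done by induction on the intrinsic filtration $V[k]=d^{-1}(\Lambda V[k-1])$ --- this filtration is preserved by an arbitrary cdga morphism, whereas the filtrations you chose during the existence construction need not be.
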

\begin{remark}\label{rmk min 1min}
If $(\Lambda V,d)$ is a minimal Sullivan algebra then $d$ restricts to  $\Lambda V_1$ and the natural inclusion $(\Lambda V_1,d)\to (\Lambda V,d)$ provides a $1$-minimal model of $(\Lambda V,d)$. This can be shown by direct comparaison between first and second cohomology groups of both Sullivan algebras. In particular, one recovers the $1$-minimal model of a cdga from the minimal model.
\end{remark}
\subsection{Models for topological spaces, rational $K(\pi,1)$ spaces and formality}\label{S12}
One has a contravariant functor $A_{PL}$ from the category of topological spaces into the category of cdga's over $\Q$. For $X$ a topological space, $A_{PL}(X)$ is called the cdga of polynomial differential forms on $X$ with values in $\Q$. A Sullivan model (resp. minimal model) of $X$ is a Sullivan model (resp. minimal model) for $A_{PL}(X)$. The algebra $A_{PL}(*)$ for $*$ a point is isomorophic to $\Q$. Hence, $A_{PL}$ induces a contravariant functor from pointed topological spaces to augmented cdga's with augmentations given by $A_{PL}(i(*))$ where $i(*)$ is the inclusion of the base point.\\\\
Let $A$ be a Sullivan algebra. Denote by $\varepsilon$ the augmentation of $A$ and set $Q(A)= \mathrm{Ker}(\varepsilon)/ (\mathrm{Ker}(\varepsilon)\cdot \mathrm{Ker}(\varepsilon))$ ($Q(\Lambda V)=V$). The differential $d$ of $A$ induces a differential $Q(d)$ on $Q(A)$. We denote by $H^*(Q(A))$ the cohomology of $Q(A)$ with respect to $Q(d)$ ($H^0(Q(A))=0$). As one can check the Sullivan algebra $A$ is minimal if and only if $Q(d)=0$ (equivalently $H^*(Q(A))=Q(A)$). If $f : A \to B$ is a morphism between Sullivan algebras then $f$ induces naturally a morphism $f^\#:H^*(Q(A))\to H^*(Q(B))$ of graded spaces.
\begin{proposition}\label{quasi diez}
If $f:A \to B$ is a cdga morphism between Sullivan algebras then $f$ is a quasi-isomorphism if and only $f^\#$ is an isomorphism.
\end{proposition}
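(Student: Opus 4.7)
The plan is to reduce the proposition to the analogous statement for morphisms between minimal Sullivan algebras, which is classical. The key auxiliary fact is the decomposition theorem: any Sullivan algebra $(\Lambda V, d)$ is isomorphic to a tensor product of a minimal Sullivan algebra $(\Lambda V', d')$ and a contractible Sullivan algebra $(\Lambda W, d'')$ with $W = U \oplus d''U$. On indecomposables, $Q(d)$ vanishes on $V'$ and identifies $U$ with $d''U$, so $H^*(Q(\Lambda V)) \cong V'$, and the canonical inclusion of the minimal factor into $\Lambda V$ is simultaneously a quasi-isomorphism and an isomorphism on $H^*(Q(-))$.

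Next, take minimal Sullivan models $m_A : (\Lambda V_A, d_A) \to A$ and $m_B : (\Lambda V_B, d_B) \to B$ of $A$ and $B$. Using the standard lifting lemma for Sullivan algebras, lift $f$ to a cdga morphism $\tilde f : \Lambda V_A \to \Lambda V_B$ with $m_B \circ \tilde f$ homotopic to $f \circ m_A$. Since cdga homotopic morphisms induce the same maps both in cohomology and on $H^*(Q(-))$, and since $m_A, m_B$ induce isomorphisms in both by the previous paragraph, the conditions ``$f$ is a quasi-isomorphism'' and ``$f^\#$ is an isomorphism'' are respectively equivalent to ``$\tilde f$ is a quasi-isomorphism'' and ``$\tilde f^\#$ is an isomorphism''.

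It remains to treat morphisms between minimal Sullivan algebras. There $Q(d_A) = Q(d_B) = 0$, so $\tilde f^\# = Q(\tilde f) : V_A \to V_B$, and one must show that $\tilde f$ is a quasi-isomorphism if and only if $Q(\tilde f)$ is an isomorphism. This is proved by induction along the Sullivan filtrations of source and target, applying the five lemma to the long exact sequences in cohomology attached to the successive Hirsch extensions $\Lambda V_A(k-1) \hookrightarrow \Lambda V_A(k)$ and the analogous ones for $V_B$.

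The main technical obstacle lies in the first step: the minimal--contractible decomposition of an arbitrary Sullivan algebra and the compatibility of $H^*(Q(-))$ with cdga homotopy. Both are standard in Sullivan theory but are the only non-formal ingredients; once they are granted, the proof becomes a chain of equivalences reducing to the well-known fact about morphisms of minimal Sullivan algebras.
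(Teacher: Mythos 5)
The paper gives no proof of this proposition: it is quoted in the reminders section from the literature (it is Proposition 14.13 of F\'elix--Halperin--Thomas, and the commented-out note ``Page 153, lifting property'' in the source points there). Your overall architecture --- decompose a Sullivan algebra as (minimal)\,$\otimes$\,(contractible), observe that the inclusion of the minimal factor is both a quasi-isomorphism and an isomorphism on $H^*(Q(-))$, use the lifting lemma and homotopy invariance of $H^*(Q(-))$ to reduce to a morphism of minimal Sullivan algebras --- is exactly the standard route, and those steps are sound (one small gloss: to know that an \emph{arbitrary} minimal model $m_A$ induces an isomorphism on $H^*(Q(-))$ you need uniqueness of minimal models together with homotopy invariance, or you should simply take $m_A$ to be the inclusion of the minimal factor furnished by the decomposition).

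The genuine problem is your final step. First, a morphism $\tilde f:\Lambda V_A\to\Lambda V_B$ of minimal Sullivan algebras has no reason to satisfy $\tilde f(\Lambda V_A(k))\subset \Lambda V_B(k)$ for the given Sullivan filtrations, so an ``induction along the Sullivan filtrations of source and target'' is not even set up. Second, an extension $\Lambda V(k-1)\hookrightarrow \Lambda V(k)$ does not in general produce a long exact sequence in cohomology (only a spectral sequence; a long exact sequence arises only in special cases such as adjoining a single odd-degree generator), so there are no ``long exact sequences attached to the successive Hirsch extensions'' to feed into the five lemma. What actually works in the minimal case is: if $Q(\tilde f)$ is an isomorphism, filter by word length --- in total degree $n$ only words of length at most $n$ occur since $V=V^{\geq 1}$, and the associated graded of $\tilde f$ is $\Lambda Q(\tilde f)$ --- so $\tilde f$ is an isomorphism of cdga's, hence a quasi-isomorphism; conversely, a quasi-isomorphism of minimal Sullivan algebras is an isomorphism (F\'elix--Halperin--Thomas, Theorem 14.11), whose proof in the non-simply-connected case is genuinely harder than a five-lemma induction and goes through the homotopy-lifting machinery. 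You should either supply these two arguments or cite the theorem; as written, the crux of the proposition rests on a step that does not go through.
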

\begin{comment}
Page 153, lifting property
\end{comment}
Two Sullivan models of a given cdga are quasi-isomorphic. Hence, by the previous proposition $H^*(Q(A))\simeq H^*(Q(A'))$ for $(A,m)$, $(A',m')$ two different Sullivan models of $B$. The $\psi$-homotpy space $\pi_\psi^*(B)$ of $B$ (\cite{HarRF}) correspond to the isomorphism class of the vector spaces $H^*(Q(A))$ for $A$ running over Sullivan models of $B$. When the context is not ambiguous, we will designate $\pi_\psi^*(B)$ by a representative of the class and write $\pi_\psi^*(B)=V$ (for a given representative) or say $\pi_\psi^*(B)$ is isomorphic to $V$. For instance, if $\Lambda V$ is a minimal model of $B$ then $\pi_\psi^*(B)=V$. The $\psi$-homotopy space $\pi_\psi^*(X)$ of a topological space $X$ is the $\psi$-homotopy space of $A_{PL}(X)$. We denote by $\pi_\psi^k(X)$ the degree $k$ component of $\pi_\psi^*(X)$ and call it the $k$-th $\psi$-homotopy group of $X$.\\\\
We have assigned to a topological space $X$ a cdga of polynomial forms $A_{Pl}(X)$. In fact, one assigns to a simplicial set $X_s$ a cdga of polynomial forms $A_{PL}(X_s)$ (\cite{FelHar}, \cite{BousGug}) and $A_{PL}(X)$ is equal to $A_{PL}(S_*(X))$ where $S_*(X)$ is the simplicial set of singular complexes on $X$. The Bousfield-Kan $\Q$-completion (\cite{BousKan}) assigns to a simplicial set $X_s$ a simplicial set $\Q_{\infty} X_s$. We recall that the homotopy groups of a simplicial set $X_s$ correspond to those of its realization $\vert X_s \vert$. If $X_s$ has finite dimensional rational cohomology, then $\pi_k\Q_{\infty} X_s \simeq \pi_\psi^k(A_{PL}(X_s))$, for $k\geq 2$ (\cite{BousGug}, theorem 12.8). Hence, for $X$ a topological space with finite Betti numbers, $\vert \Q_{\infty}S_*(X) \vert$ is a $K(\pi,1)$ if and only if $\pi_\psi^k(X)=0$ for $k\geq 2$. The latter condition is equivalent to the following (Cf. remark \ref{rmk min 1min}): the $1$-minimal model of $X$ is the minimal model of $X$. The notion of rational $\Q(\pi,1)$ appears in different papers in the literature (\cite{FMmin}, \cite{PapYuz}, \cite{PapSucChenLie}):
\begin{definition}
A topological space $X$ with finite betti numbers is a rational $K(\pi,1)$ if one of the following equivalent conditions is satisfied:
\begin{itemize}
\item[1)] The topological space $\vert \Q_{\infty}S_*(X) \vert$ is a $K(\pi,1)$.
\item[2)] The minimal model and the $1$-minimal model of $X$ are isomorphic.
\item[3)] For $k\geq 2$, $\pi_\psi^k(X)=0$.
\end{itemize}
\end{definition}
One can wonder whether a $K(\pi,1)$ space is a rational $K(\pi,1)$ space. The answer is no. A counter example is given in \cite{FMmin}.
\begin{remark}
If $X$ is a CW complex with finite dimensional first rational homology group, then the Malcev algebra of $\pi_1X$ determines the $1$-minimal model of $X$ (Cf. section \ref{S4}). Hence, if moreover $X$ is a rational $K(\pi,1)$, then the Malcev Lie algebra of $X$ determines the minimal model of $X$.
\end{remark}
\begin{theorem}[\cite{PapYuz}]\label{Koszul KP1}
If $X$ is a formal topological space with finite Betti numbers, then $X$ is a rational $K(\pi,1)$ if and only if the graded cohomology algebra $H^*(X,\Q)$ is Koszul.
\end{theorem}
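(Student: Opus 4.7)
My plan is to reduce the theorem to an algebraic characterization: a connected graded $\Q$-algebra $H$ with finite-dimensional components is Koszul if and only if the $1$-minimal model of the cdga $(H,0)$ is itself a minimal model of $(H,0)$. Formality then identifies this with the statement to prove.

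First, formality provides a zig-zag of cdga quasi-isomorphisms connecting $A_{PL}(X)$ and $H := H^*(X,\Q)$ viewed as a cdga with zero differential. Hence any minimal (resp.\ $1$-minimal) Sullivan model of $X$ is a minimal (resp.\ $1$-minimal) model of $(H,0)$. By Remark \ref{rmk min 1min}, the $1$-minimal model of a minimal model $(\Lambda V, d)$ of $(H,0)$ is precisely the sub-cdga $(\Lambda V^1, d)$. Consequently, $X$ is a rational $K(\pi,1)$ if and only if the inclusion $(\Lambda V^1, d) \hookrightarrow (\Lambda V, d)$ is a quasi-isomorphism, equivalently if and only if $V^k = 0$ for all $k \geq 2$, equivalently if and only if the $1$-minimal model of $(H,0)$ is itself a minimal model of $(H,0)$.

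Next, I would analyze the standard stepwise construction of the minimal model of $(H,0)$. One begins with $V^1 \supseteq H^1$ carrying $d=0$, and then adjoins further degree-$1$ generators whose differentials kill unwanted classes in $H^2(\Lambda V^1)$ (those coming from the relations of $H$). New generators in cohomological degree $\geq 2$ are forced exactly when either $H$ is not generated in degree $1$, or the ideal of relations is not generated by quadratic relations, or higher "syzygies" exist in the Koszul-type complex of $H$. Because $(H,0)$ has zero differential, one obtains a bigraded Halperin--Stasheff minimal model $V = \bigoplus_{k,j} V^k_j$, where $j$ is the weight grading inherited from $H$, and a standard comparison with the bar construction identifies $V^k_j$ with the internal-degree-$j$ component of $\mathrm{Tor}^H_k(\Q,\Q)$. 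By definition, $H$ is Koszul precisely when $\mathrm{Tor}^H_k(\Q,\Q)$ is concentrated in internal degree $k$ for every $k$; applied diagonally, this is equivalent to the vanishing $V^k = 0$ for all $k \geq 2$.

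Combining the two steps yields both implications. The principal difficulty, and the algebraic heart of the argument, is the last equivalence: setting up the bigraded minimal model with the weight grading carefully, then identifying the bigraded generator space with $\mathrm{Tor}^H_*(\Q,\Q)$ via the bar or Koszul complex of $H$, and finally showing that diagonal concentration of $\mathrm{Tor}$ is equivalent to the absence of higher-degree generators. Once this dictionary is established, both directions of the theorem follow at once from the formality reduction.
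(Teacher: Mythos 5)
You should first be aware that the paper does not prove this statement at all: it is quoted verbatim from \cite{PapYuz} and used as a black box (e.g.\ in Proposition \ref{prop SRK} and in Section \ref{S5}), so there is no internal proof to compare against. Judged on its own merits, your first step is fine: formality does reduce the theorem to the claim that the minimal model of the cdga $(H,0)$, $H=H^*(X,\Q)$, has no generators in degree $\geq 2$ precisely when $H$ is Koszul, and the translation ``rational $K(\pi,1)$ $\Leftrightarrow$ $V^k=0$ for $k\geq 2$'' is exactly condition $(3)$ of the paper's definition.

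The gap is in your pivotal dictionary $V^k_j\cong\bigl(\mathrm{Tor}^H_k(\Q,\Q)\bigr)_j$ together with the conclusion that diagonal concentration of $\mathrm{Tor}$ is equivalent to $V^{k}=0$ for $k\geq2$. As stated this is false, and the simplest example already breaks it: take $X=S^1$, so $H=\Lambda(x)$ with $x$ of degree $1$. Here $H$ is Koszul, $\mathrm{Tor}^H_k(\Q,\Q)$ is one--dimensional and concentrated in internal degree $k$ for every $k\geq 0$, yet the minimal model of $(H,0)$ is $(\Lambda(x),0)$ itself, with $V=V^1$ one--dimensional and $V^k=0$ for $k\geq 2$. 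Your identification would force $V^k_k\neq 0$ for all $k$, i.e.\ it would declare $S^1$ not to be a rational $K(\pi,1)$. Two separate corrections are needed. First, the degree bookkeeping: a class in $\mathrm{Tor}_{k,j}$ contributes to homotopy (hence to generators) in \emph{complementary} degree $j-k+1$, not in degree $k$; it is this shift that makes ``everything on the diagonal $j=k$'' correspond to ``all generators in degree $1$.'' Second, and more seriously, even after reindexing the generator space is not $\mathrm{Tor}$ but only its space of indecomposables (in the simply connected analogue, $V\cong\mathrm{Hom}(\pi_*X,\Q)$ is the primitives of $H_*(\Omega X)\cong\mathrm{Tor}^H(\Q,\Q)$, which is strictly smaller); in the non--simply--connected, degree--$1$--generated situation relevant here there is no Eilenberg--Moore collapse to invoke, and establishing the equivalence is genuinely the content of the Papadima--Yuzvinsky argument, which proceeds instead through L\"ofwall's identification of the diagonal subalgebra $\bigoplus_k\mathrm{Ext}_H^{k,k}(\Q,\Q)$ with the enveloping algebra of the holonomy Lie algebra, i.e.\ with the dual of the $1$-minimal model. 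Since your ``standard comparison with the bar construction'' is exactly the step you defer, and the version of it you assert is contradicted by $S^1$, the proposal does not yet constitute a proof of either implication.
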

The notion of formal space can be found in (\cite{FelHar}, P.156) and the notion of Koszul algebra (for graded algebras) is largely covered in \cite{BeilKoszul}. Koszul algebras are also defined in \cite{Prid} (where the definition coincide with the latter in the graded case). The notions themselves are not essential for the work.  
\subsection{Rational fibrations and models}\label{S13}
For $X$ a topological space, one has a natural isomorphism between the singular cohomology $H^*(X,\Q)$ and the cohomology of $A_{PL}(X)$. Hence, for $f$ a map between topological spaces, $H^*(f,\Q)$ can be identified to $H^*(A_{PL}(f))$.\\\\
In this subsection, $F\overset{J}{\to} E \overset{\pi}{\to} B$ is a fibration where $F,E,B$ are pointed path connected spaces and $J,\pi$ are maps of pointed spaces. The map $\pi$ induces a surjective map on fundamental groups (this follows from the homotopy long exact sequence of the fibration) and therefore a surjective map on first singular homology groups. As a consequence, $H^1(\pi,\Q)$ and $H^1(A_{PL}(\pi))$ are injective and $A_{PL}(\pi)$ admits a minimal relative Sullivan model $(A_{PL}(B)\otimes \Lambda Z, m)$. Denote by $\varepsilon_B$ the augmentation of $A_{PL}(B)$. The map $A_{PL}(J)m $ factors throught $\varepsilon_B\cdot \mathrm{id}: A_{PL}(B) \otimes \Lambda Z\to \Lambda Z$ to give the commutative diagram of cdga morphisms:
\[\begin{tikzcd}\label{A}
A_{PL}(B) \arrow[d,equals] \arrow{r}{i_B}& A_{PL}(B) \otimes \Lambda Z \arrow{d}{m} \arrow{r}{\varepsilon_B\cdot \mathrm{id}} & \Lambda Z \arrow{d}{m''}\\
A_{PL}(B) \arrow{r}[swap]{A_{PL}(\pi)} & A_{PL}(E) \arrow{r}[swap]{A_{PL}(J)} & A_{PL}(F)
\end{tikzcd},\qquad (A)\]
where $i_B(x)=x\otimes 1$.
\begin{definition}[\cite{HarRF}]\label{def RF}
The fibration $F\overset{J}{\to} E \overset{\pi}{\to} B$ is called a rational fibration, if $m''$ is a quasi-isomorphism.
\end{definition}
Thus the fibration is a rational fibration if and only if $(\Lambda Z, m'')$ is the minimal model of $A_{PL}(F)$, since $\Lambda Z$ is minimal. Generally, a rational fibration (\cite{HarRF}) is a sequence of maps of pointed topological spaces $X_1 \to X_2 \to X_3$ satisfying a similar condition. Here, we only consider the case of fibrations.\\\\
Denote by $\varepsilon_B$ and $(\Lambda V, m_B)$ the augmentation map and the minimal model of $A_{PL}(B)$ respectively. Let $\varepsilon_V$ be the augmentation map of $\Lambda V$, and $(\Lambda V \otimes \Lambda W,m')$ a minimal relative Sullivan model $A_{PL}(\pi)i_B$. As for $(A)$, we obtain the following commutative diagram of cdga morphisms:
\[\begin{tikzcd}
\Lambda V \arrow{d}{m_B} \arrow{r}{i_V}& \Lambda V \otimes \Lambda W \arrow{d}{m'} \arrow{r}{\varepsilon_V\cdot \mathrm{id}} & \Lambda W \arrow{d}{m_Z}\\
A_{PL}(B) \arrow{r}[swap]{i_B}& A_{PL}(B) \otimes \Lambda Z \arrow{r}[swap]{\varepsilon_B\cdot \mathrm{id}} & \Lambda Z
\end{tikzcd},\qquad (B)\]
where $i_V(x)=x\otimes 1$, $m_B,m'$ are quasi-isomorphisms and $\Lambda V,\Lambda W$ and $\Lambda Z$ are minimal Sullivan algebras. The fact that $m'$ and $m_B$ are quasi-isomorphisms imply that $m_Z$ is a quasi-isomorphism (\cite{HarLMM}), theorem 7.1 p. 96). Hence, $(\Lambda W, m_Z)$ is a minimal model for $\Lambda Z$ which is itself a minimal Sullivan algebra. Therefore, $m_Z$ is an isomorphism by uniqueness of the minimal model.\\\\
Combining diagrams $(A)$ and $(B)$ and using the fact that $m_Z$ is an isomorphism, we get the commutative diagram of cdga morphisms:
\[\begin{tikzcd}
\Lambda V \arrow{d}{m_B} \arrow{r}{i_V}& \Lambda V \otimes \Lambda W \arrow{d}{m_E} \arrow{r}{\varepsilon_V\cdot \mathrm{id}} & \Lambda W \arrow{d}{m_F}\\
A_{PL}(B) \arrow{r}[swap]{A_{PL}(\pi)}& A_{PL}(E) \arrow{r}[swap]{A_{PL}(J)} & A_{PL}(F)
\end{tikzcd},\qquad (C)\]
where $i_V(x)=x\otimes 1$, $\Lambda V$ and $\Lambda W$ are minimal Sullivan algebras, $\Lambda V \otimes \Lambda W$ is a minimal relative Sullivan algebra, and $m_B$ and $m_E$ are a quasi-isomorphisms. Moreover, the fibration $F\to E\to B$ is a rational fibration if and only if $(\Lambda W, m_F)$ is a minimal model for $A_{PL}(F)$.
\begin{proposition}\label{prop model RF}
The relative Sullivan algebra $\Lambda V \otimes \Lambda W \simeq \Lambda (V\oplus W)$ is a Sullivan algebra, i.e. we can equip $\Lambda (V\oplus W)$ with a filtration satisfying the conditions in the definition of a Sullivan algebra. In particular, $(\Lambda V \otimes \Lambda W,m_E)$ is a Sullivan model for $E$.
\end{proposition}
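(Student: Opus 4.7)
The proposition asks me to upgrade the relative Sullivan structure on $\Lambda V \otimes \Lambda W = \Lambda(V\oplus W)$ (where $B=\Lambda V$ is itself a minimal Sullivan algebra) into an absolute Sullivan structure. So I have in hand an increasing filtration $V(0)\subset V(1)\subset\cdots$ on $V$ with $dV(0)=0$, $dV(k)\subset \Lambda V(k-1)$, and an increasing filtration $W(0)\subset W(1)\subset\cdots$ on $W$ with $dW(0)\subset \Lambda V$, $dW(k)\subset \Lambda V\otimes \Lambda W(k-1)$. My task is to construct from these an increasing filtration $U(0)\subset U(1)\subset\cdots$ of $V\oplus W$ such that $\bigcup U(n)=V\oplus W$, $dU(0)=0$ and $dU(n)\subset \Lambda U(n-1)$ for $n\geq 1$.

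The naive attempt $U(n)=V(n)\oplus W(n-1)$ fails because an element $w\in W(0)$ only satisfies $dw\in \Lambda V$, without any control on which $V(k)$ it lives in. The fix is to add elements of $W$ only once their differentials already lie in the part of $V\oplus W$ that has been introduced. Concretely I set $U(0)=V(0)$ and, for $n\geq 1$, define
\[
U(n)=V(n)\oplus \tilde W(n),\qquad \tilde W(n)=\{w\in W(n):dw\in \Lambda U(n-1)\},
\]
inductively. The sequence $\tilde W(n)$ is automatically increasing (if $w\in \tilde W(n-1)$ then $w\in W(n)$ and $dw\in \Lambda U(n-2)\subset \Lambda U(n-1)$), so $U(n)$ is increasing. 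The Sullivan conditions are immediate: $dU(0)=dV(0)=0$, and for $n\geq 1$ one has $dV(n)\subset \Lambda V(n-1)\subset \Lambda U(n-1)$ and $d\tilde W(n)\subset \Lambda U(n-1)$ by construction, whence $dU(n)\subset \Lambda U(n-1)$.

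The only nontrivial point, and the one I expect to be the main obstacle, is exhaustion: one must show every $w\in W$ lies in some $U(n)$. I would prove this by induction on the smallest $k$ with $w\in W(k)$, exploiting that $dw$, being a polynomial in $\Lambda V\otimes \Lambda W(k-1)$, involves only finitely many generators. For $k=0$ we have $dw\in \Lambda V(N)\subset \Lambda U(N)$ for some $N$, so $w\in \tilde W(N+1)$. For the inductive step, $dw$ involves finitely many elements of $V$ and of $W(k-1)$; the former lie in some $\Lambda V(N)$, and each of the latter lies in some $U(m_i)$ by the induction hypothesis, so taking $M=\max(N,k,m_1,\dots,m_r)$ we get $dw\in \Lambda U(M)$ and hence $w\in \tilde W(M+1)\subset U(M+1)$. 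The elements of $V$ are trivially captured since $V(n)\subset U(n)$, so $\bigcup U(n)=V\oplus W$.

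This verifies that $\Lambda V\otimes \Lambda W\simeq \Lambda(V\oplus W)$ is a Sullivan algebra. Combined with diagram $(C)$, where $m_E:\Lambda V\otimes \Lambda W\to A_{PL}(E)$ has already been shown to be a quasi-isomorphism, one concludes that $(\Lambda V\otimes \Lambda W,m_E)$ is a Sullivan model of $E$, which is the second assertion of the proposition.
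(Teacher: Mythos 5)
Your proof is correct and takes essentially the same route as the paper, which simply invokes the argument of part $(ii)$ of the proof of Proposition 15.5 in \cite{FelHar}: one builds an exhaustive filtration of $V\oplus W$ by induction, using that each $dw$ is a polynomial in finitely many generators already captured at an earlier stage. The only cosmetic difference is that you work with the explicit hybrid filtration $U(n)=V(n)\oplus\tilde W(n)$ rather than the canonical one $S(k)=\{s\in V\oplus W \mid ds\in\Lambda S(k-1)\}$ used in loc.\ cit., but the exhaustion induction is the same.
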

\begin{proof}
One can use the argument in $(ii)$ of the proof of proposition 15.5 (p. 198) of \cite{FelHar}.
\end{proof}
\begin{comment}
Denote by $d$ the differential of $\Lambda V \otimes \Lambda W\simeq \Lambda (V\oplus W) $.
Set $S(-1)=0$ and define by induction $S(k)=\{ s \in V\oplus W \vert ds \in \Lambda S(k-1)\}$ for $k\geq 1$. It suffices to show that $V\oplus W=\cup_{k \geq 0} S(k)$. Since $\Lambda V$ is a Sullivan algebra, we have $d\Lambda V(k) \subset \Lambda V(k-1) $ (with $\Lambda V(-1):=0$). Hence, an induction proves that $V(k)\subset S(k)$ and therefore $V\subset \cup_{k \geq 0} S(k)$. Since $\Lambda V \otimes \Lambda W$ is a relative Sullivan algebra, $dW(k) \subset \Lambda( V \oplus W(k-1))$ (with $\Lambda W(-1):=1$). This and the fact that $V\subset \cup_{k\geq 0} S(k)$, proves that $ w\in W(k)$ lies in $S(k_w)$ for a given $k_w$ and therefore $W\subset \cup_{k\geq 0} S(k)$. We have proved that $\Lambda V \otimes \Lambda W$ is a Sullivan algebra. The fact that the couple in the proposition is a model for $E$ follows, since $m_E$ is a quasi-isomorphism.
\end{comment}
We will give a criterion from \cite{HarRF} allowing determining whether the Sullivan model $(\Lambda V \otimes \Lambda W,m_E)$ of $E$ is minimal or not. The top morphisms in diagram $(C)$ give an exact sequence of complexes:
$$0\to Q(\Lambda V) \overset{Q(i_V)}{\to} Q(\Lambda V \otimes \Lambda W) \overset{Q(\varepsilon_V\cdot 1)}{\to} Q(\Lambda W) \to 0,$$
and hence by the Zig-Zag lemma we have a long exact sequence:
\begin{align}\label{exact sequence}
\begin{split}
\cdots \to H^{k-1}(Q(\Lambda W)) &\to H^k(Q(\Lambda V))\overset{i_v^\#}{\to} H^k(Q(\Lambda V \otimes \Lambda W)) \\
&\overset{(\varepsilon_v\cdot 1)^\#}{\to} H^k(Q(\Lambda W)) \to H^{k+1}Q(\Lambda V) \to \cdots
\end{split}
\end{align}
We denote by $\partial^*$ the corresponding connecting homomorphism $H^*(Q(\Lambda W))\to H^{*+1}(Q(\Lambda V))$. Note that the groups in the long exact sequence correspond to the $\psi$-homotopy groups of $F,E$ and $B$. Since the long sequence is exact the following three conditions are equivalent:
$$(C1) \: \: \partial^*=0,\quad
(C2)\:\: \text{$i_v^\#$ is injective}, \quad
(C3)\:\: \text{$(\varepsilon_v\cdot 1)^\#$ is surjective},$$
where the maps are regarded as maps of graded spaces.
\begin{theorem}[\cite{HarRF}]\label{th minimal RF}
Assume that the fibration $F\to E \to B$ is a rational fibration. The Sullivan model $(\Lambda V \otimes \Lambda W,m_E)$ of $E$ is minimal if and only if one of the equivalent conditions $(C1),(C2), (C3)$ preceding the theorem is satisfied.
\end{theorem}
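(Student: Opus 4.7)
The plan is to characterize the minimality of the Sullivan algebra $(\Lambda V \otimes \Lambda W, d)$ (which is a Sullivan algebra by Proposition \ref{prop model RF}) by translating everything to the indecomposables functor $Q$, and then recognizing the obstruction as the connecting homomorphism $\partial^*$. Recall that a Sullivan algebra is minimal if and only if $Q(d)=0$, so the task reduces to analyzing $Q(d): V\oplus W \to V\oplus W$.

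First I would look at each summand separately. On $V$, since $(\Lambda V,d_{|\Lambda V}) = (\Lambda V, d_B)$ is the minimal model of $B$ constructed in diagram $(B)$, we have $Q(d)|_V=0$. On $W$, the rational fibration hypothesis is exactly what is needed: combining with the minimality of $\Lambda Z$ and the fact that $m_Z$ is an isomorphism (discussed after diagram $(B)$), the induced differential $\bar{d}$ on $\Lambda W\simeq \Lambda Z$ makes $\Lambda W$ a minimal Sullivan algebra, so $Q(\bar{d})=0$. Since $Q(\bar{d})=Q(\varepsilon_V\cdot \mathrm{id}) \circ Q(d)|_W$ and $Q(\varepsilon_V\cdot\mathrm{id})$ is the projection $V\oplus W \to W$, this forces the $W$-component of $Q(d)(w)$ to vanish for every $w\in W$. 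Hence $Q(d)|_W$ factors as a map $W\to V$.

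Second, I would identify this factored map with $\partial^*$. Because $Q(d)|_V=0$ and $Q(\bar d)=0$, the long exact sequence \eqref{exact sequence} reduces to
$$\cdots \to W^{k-1} \xrightarrow{\partial^{k-1}} V^{k} \xrightarrow{i_V^\#} H^k(Q(\Lambda V\otimes \Lambda W)) \xrightarrow{(\varepsilon_V\cdot 1)^\#} W^{k} \xrightarrow{\partial^{k}} V^{k+1} \to \cdots$$
with $H^*(Q(\Lambda V))=V$ and $H^*(Q(\Lambda W))=W$. To compute $\partial^*(w)$ by the standard zig-zag recipe, lift $w\in W$ to $(0,w)\in V\oplus W = Q(\Lambda V \otimes \Lambda W)$ via the (surjective) projection $Q(\varepsilon_V\cdot\mathrm{id})$, then apply $Q(d)$; the result lies in $V$ by the previous paragraph and equals $Q(d)|_W(w)$. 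Thus $\partial^* = Q(d)|_W$ under the identification $W\to V$.

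Combining the two observations, $Q(d)=0$ on all of $V\oplus W$ if and only if $Q(d)|_W=0$, if and only if $\partial^*=0$, which is $(C1)$; this proves the theorem. The only real step is checking that the rational fibration hypothesis forces the minimality of the induced differential on $\Lambda W$ so that $Q(d)|_W$ lands in $V$; once this is in place, the identification with $\partial^*$ is just the zig-zag definition of the connecting homomorphism, so the argument is largely bookkeeping with no substantive obstacle.
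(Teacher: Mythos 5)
Your proof is correct and takes essentially the same route as the paper: both reduce minimality to $Q(d)=0$, use minimality of $\Lambda V$ and of $(\Lambda W,\bar d)$ to see that $Q(d)$ vanishes on $V$ and sends $W$ into $V$, and then read the equivalence with $(C1)$--$(C3)$ off the long exact sequence (you make $\partial^*=Q(d)\vert_W$ explicit via the zig-zag, while the paper argues $(C2)\Rightarrow Q(d)=0\Rightarrow (C3)$ and closes the loop by exactness). One small inaccuracy: the minimality of $(\Lambda W,\bar d)$ is not a consequence of the rational fibration hypothesis but of $\Lambda V\otimes\Lambda W$ being a \emph{minimal} relative Sullivan algebra; the rational fibration hypothesis only serves to identify $W$ with $\pi_\psi^*(F)$.
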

\begin{proof}
We know that the couple is a model for $E$. We show that the Sullivan algebra $\Lambda V \otimes \Lambda W$ is minimal. Denote by $d$ the differential of $\Lambda V \otimes \Lambda W$. Since $\Lambda V$ and $\Lambda W$ are minimal algebras, the induced differentials on $Q(\Lambda V)$ and $Q(\Lambda W)$ are trivial. It follows that $(C2)\Rightarrow Q(d)=0 \Rightarrow (C3)$. Hence, $Q(d)=0$ is equivalent to any of the conditions $(C1),(C2),(C3)$. This proves the result, since a Sullivan algebra $(A,d_A)$ is minimal if and only if $Q(d_A)=0$.
\end{proof}
\begin{corollary}\label{sec RF}
If the fibration $F\to E\overset{\pi}\to B$ is a rational fibration admitting a cross-section, then the Sullivan model $(\Lambda V \otimes \Lambda W,m_E)$ for $E$ is minimal.
\end{corollary}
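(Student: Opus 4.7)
The plan is to apply Theorem \ref{th minimal RF}: it suffices to verify condition $(C2)$, namely that the map $i_V^{\#} : H^*(Q(\Lambda V)) \to H^*(Q(\Lambda V \otimes \Lambda W))$ is injective. The natural way to get injectivity is to exhibit a left inverse at the cdga level and then pass to $Q$ and cohomology. More precisely, I plan to construct a cdga morphism $\sigma : \Lambda V \otimes \Lambda W \to \Lambda V$ with $\sigma \circ i_V$ equal (or at least homotopic) to $\mathrm{id}_{\Lambda V}$; then $Q(\sigma)$ will be a chain-level left inverse to $Q(i_V)$ (up to a chain homotopy), so on cohomology $H^*(Q(\sigma)) \circ i_V^{\#} = \mathrm{id}$, forcing $i_V^{\#}$ to be a monomorphism.

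To produce $\sigma$, I will use the cross-section $s : B \to E$. Since $\pi \circ s = \mathrm{id}_B$, applying $A_{PL}$ gives $A_{PL}(s) \circ A_{PL}(\pi) = \mathrm{id}_{A_{PL}(B)}$. Composing with the quasi-isomorphism $m_E$ from diagram $(C)$ yields a cdga morphism
\[
\psi := A_{PL}(s) \circ m_E : \Lambda V \otimes \Lambda W \to A_{PL}(B),
\]
and the relation $A_{PL}(\pi) \circ m_B = m_E \circ i_V$ combined with $A_{PL}(s) \circ A_{PL}(\pi) = \mathrm{id}$ gives $\psi \circ i_V = m_B$. Thus $\psi$ already lifts $m_B$ on $\Lambda V \otimes 1$, but it lands in $A_{PL}(B)$ rather than $\Lambda V$.

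Now I invoke the standard Sullivan lifting lemma for relative Sullivan algebras (the one giving the left lifting property of relative Sullivan inclusions against surjective quasi-isomorphisms): replacing $m_B$ if necessary by a surjective quasi-isomorphism in its homotopy class (or applying the homotopy version of the lemma directly), one can lift $\psi$ through $m_B$ to a cdga morphism $\sigma : \Lambda V \otimes \Lambda W \to \Lambda V$ which satisfies $\sigma \circ i_V = \mathrm{id}_{\Lambda V}$ on the nose, and $m_B \circ \sigma \simeq \psi$ rel $\Lambda V$. This is the main (mild) technical step; the square to lift is
\[
\begin{tikzcd}
\Lambda V \arrow[r,"\mathrm{id}"] \arrow[d,"i_V"'] & \Lambda V \arrow[d,"m_B"] \\
\Lambda V \otimes \Lambda W \arrow[r,"\psi"'] \arrow[ur,dashed,"\sigma"] & A_{PL}(B)
\end{tikzcd}
\]
and it commutes by the computation above.

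Once $\sigma$ is in hand, the rest is formal: applying the functor $Q$ to $\sigma \circ i_V = \mathrm{id}$ gives $Q(\sigma) \circ Q(i_V) = \mathrm{id}_{Q(\Lambda V)}$, and taking cohomology gives a left inverse to $i_V^{\#}$. Hence $i_V^{\#}$ is injective, condition $(C2)$ holds, and Theorem \ref{th minimal RF} yields that $(\Lambda V \otimes \Lambda W, m_E)$ is a minimal Sullivan model for $E$. The only point requiring any care is the invocation of the lifting lemma, so I would add a one-sentence reference to the corresponding statement in \cite{FelHar}.
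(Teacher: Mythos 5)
Your proposal is correct and follows essentially the same route as the paper: both use the cross-section to lift $A_{PL}(s)$ to a cdga retraction $\Lambda V\otimes\Lambda W\to\Lambda V$ of $i_V$ (strictly or up to homotopy), deduce that $i_V^{\#}$ is injective, and conclude via condition $(C2)$ of Theorem \ref{th minimal RF}; the paper invokes the lifting property (\cite{FelHar}, Proposition 12.9) in one step where you spell out the intermediate map $\psi=A_{PL}(s)\circ m_E$ and the square to be lifted. Note that even if the lift only satisfies $\sigma\circ i_V\simeq\mathrm{id}$ up to homotopy, the argument goes through as in the paper, since $\sigma\circ i_V$ is then a quasi-isomorphism and Proposition \ref{quasi diez} gives that $(\sigma\circ i_V)^{\#}=H^*(Q(\sigma))\circ i_V^{\#}$ is an isomorphism.
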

\begin{proof}
Let $\Lambda V$ and $\Lambda V \otimes \Lambda W$ be the Sullivan algebras of diagram $(C)$. We have seen that they are models of $A_{PL}(B)$ and $A_{PL}(E)$. Assume that a cross-section $s$ exists. The map $A_{PL}(s)$, will lift to the models giving a diagram commuting up to a cdga homotopy (\cite{FelHar}, proposition 12.9, p. 153 ):
\[\begin{tikzcd}
\Lambda V \otimes \Lambda W \arrow{dr}{\tilde{s}}&\\
\Lambda V\arrow{r}{id} \arrow{u}{i_v} & \Lambda V
\end{tikzcd}\]
Hence, $H(\tilde{s}\circ i_v)=\mathrm{id}_{H(\Lambda V)}$ ( \cite{FelHar}, (1) of proposition 12.8, p. 152) and by proposition \ref{quasi diez}, $H^*(Q(\tilde{s}\circ i_v))= H^*(Q(\tilde{s})) \circ i_v^\#$ is an isomorphism. In particular, $i_v^\#$ is injective and the corollary follows from the previous theorem.
\end{proof}
If the fibration considered is a rational fibration, then the sequence (\ref{exact sequence}) can be rewritten as (up to fixing representatives):
\begin{align}\label{ex seq 2}
\cdots \to \pi_\psi^{k-1}(F) &\overset{\partial^*}\to \pi_\psi^k(B)\overset{i_v^\#}{\to} \pi_\psi^k(E) \overset{(\varepsilon_v\cdot 1)^\#}{\to} \pi_\psi^k(F) \overset{\partial^*}{\to} \pi_\psi^{k+1}(B) \to \cdots,
\end{align}
and the Sullivan model in theorem \ref{th minimal RF} is minimal if the connecting homomorphism $\partial^*$ here is trivial.

\subsection{Some rationality criterions for fibrations}\label{S14}
Let $U$ be an abelian group or a vector space and $G$ a group acting by automorphisms on $U$ (linear automorphisms if $U$ is a vector space). Set $U_0=0$ and 
$$U_n(G)=\underset{(g_1,\dots,g_n)\in G^n}{\cap}\mathrm{Ker}((g_1-\mathrm{id})\cdots (g_n-\mathrm{id})),$$ for $n\geq 1$. We say that $G$ acts nilpotently on $U$ if $U=U_k(G)$ for a given $k$ (\cite{HarLMM}). Note that this condition is equivalent to the existence of a sequence of subgroups (vector subspaces if $U$ is a vector space): 
$$0=U(0)\subset U(1) \subset \cdots \subset U(l)=U,$$ 
stable under $G$ and such that $G$ acts trivially on $U(n+1)/U(n)$ for $n \in[0,l-1]$. Indeed, the spaces $U_n(G)$ give such a sequence if the action in nilpotent. Conversely, if the sequence $U(i)$ exists, we have $(g-\mathrm{id})(U(n))\subset U(n-1)$ for $n\in [1,l], g\in G$,  and hence $U=U_l(G)$. 
\begin{remark}
If $G$ acts on $U$ nilpotently and $\phi: H \to G$ is a group morphism, then the action of $H$ on $U$ induced by $\phi$ is nilpotent. Indeed, $U_n(G)\subset U_n(H)$.
\end{remark}
The following theorem is theorem 4.6 of \cite{HarRF} (originally theorem 20.3 of \cite{HarRF}) where the statement is for Serre fibrations.
We fix $F\to E \to B$ be a fibration with $F,E,B$ path connected, chose $b\in B$ and $e$ an element in fiber $F_b$ over $b$.
\begin{theorem}[\cite{HarRF}, \cite{HarLMM}]\label{Th RF}
If either $H^*(F,\Q)$ or $H^*(B,\Q)$ is finite dimensional and $\pi_1(B,b)$ acts nilpotently on each $H^i(F,\Q)$ for $i\geq 1$, then $F\to E \to B$ is a rational fibration. 
\end{theorem}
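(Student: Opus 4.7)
The plan is to show that the cdga morphism $m'' \colon \Lambda Z \to A_{PL}(F)$ of diagram $(A)$ is a quasi-isomorphism. My strategy is to compare two spectral sequences that both converge to $H^*(E,\Q)$: the classical Serre spectral sequence of the fibration $F\to E\to B$, and an algebraic spectral sequence built from the relative Sullivan model $A_{PL}(B)\otimes \Lambda Z$.

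First I would equip $A_{PL}(B)\otimes \Lambda Z$ with the filtration $F^p := (A_{PL}(B)^{\geq p})\otimes \Lambda Z$. A standard computation identifies the associated $E_2$ page with $H^p(B,\Q)\otimes H^q(\Lambda Z)$ (untwisted), and since $m$ is a quasi-isomorphism this spectral sequence converges to $H^*(A_{PL}(E)) \cong H^*(E,\Q)$. On the topological side, the Serre spectral sequence has $E_2^{p,q} = H^p(B; \mathcal{H}^q(F,\Q))$, with local coefficients determined by the monodromy action of $\pi_1(B,b)$ on $H^q(F,\Q)$, and also converges to $H^*(E,\Q)$. The finite-dimensionality hypothesis on $H^*(B,\Q)$ or $H^*(F,\Q)$ ensures strong convergence of both spectral sequences and legitimizes the comparison.

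The compatibility of $m$ with the two filtrations yields a morphism of spectral sequences. On the $E_2$ page this amounts to a map $H^p(B,\Q)\otimes H^q(\Lambda Z)\to H^p(B; \mathcal{H}^q(F,\Q))$ whose ``fiber part'' is $H^q(m'')$. The nilpotence assumption is exactly what reduces twisted local coefficients to untwisted ones: choosing a $\pi_1(B)$-stable filtration $0=U(0)\subset\cdots\subset U(l)=H^q(F,\Q)$ with trivial action on each successive quotient $U(i+1)/U(i)$, an induction on $l$ using the long exact sequences in local cohomology and the five-lemma shows that the natural map $H^*(B,\Q)\otimes H^q(F,\Q)\to H^*(B; \mathcal{H}^q(F,\Q))$ is an isomorphism. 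Consequently, the comparison map of $E_2$ pages becomes an isomorphism as soon as we know $H^q(m'')$ is.

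The main obstacle is the self-referential flavor of the argument: establishing that $H^q(m'')$ is an isomorphism for every $q$ is exactly what we want to conclude. The resolution is a double induction, on the fiber degree $q$ and on the nilpotence length $l$. At each stage, an $E_2$-isomorphism through a bounded range of degrees yields an $E_\infty$-isomorphism in that range; combined with the common abutment $H^*(E,\Q)$, this promotes $H^q(m'')$ to an isomorphism in the next degree. Once $m''$ is shown to be a quasi-isomorphism, the fibration $F\to E \to B$ is rational in the sense of Definition~\ref{def RF}.
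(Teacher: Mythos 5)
A preliminary remark: the paper does not prove this statement at all — it is imported verbatim from Halperin (\cite{HarRF}, Theorem 4.6; \cite{HarLMM}, Theorem 20.3) — so your attempt has to be measured against the standard published proof, which does indeed proceed by comparing the Serre spectral sequence with the spectral sequence of the base-degree filtration of the relative Sullivan model. Your overall plan is therefore the historically correct one; the problem is that both of the $E_2$-identifications on which your comparison rests are false as stated.

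The fatal step is the claim that nilpotence of the $\pi_1(B,b)$-action forces the ``natural map'' $H^*(B,\Q)\otimes H^q(F,\Q)\to H^*\bigl(B;\mathcal H^q(F,\Q)\bigr)$ to be an isomorphism. For a nontrivial local system there is no such natural map (only $V^{\pi_1(B)}$, not $V$ itself, maps naturally to global sections), and the two groups genuinely differ even for nilpotent actions: take $B=S^1$ and $V=\Q^2$ with the generator of $\pi_1(S^1)$ acting by the unipotent matrix $\left(\begin{smallmatrix}1&1\\0&1\end{smallmatrix}\right)$; then $H^0(S^1;\mathcal V)=\ker(g-\mathrm{id})\simeq\Q$ whereas $H^0(S^1,\Q)\otimes V\simeq\Q^2$. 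The five-lemma induction on the nilpotence length cannot repair this, because the long exact sequences of local systems have nonzero connecting homomorphisms and the dimensions on the two sides already disagree. Symmetrically, on the algebraic side the $E_2$-page of the filtration $F^p=A_{PL}(B)^{\ge p}\otimes\Lambda Z$ is not the untwisted product $H^p(B,\Q)\otimes H^q(\Lambda Z,\bar d)$ either: the $d_1$-differential on $E_1=A_{PL}(B)\otimes H(\Lambda Z,\bar d)$ contains, besides $d_B\otimes\mathrm{id}$, a twisting term coming from the component of $dz$ lying in $A_{PL}^1(B)\otimes\Lambda Z$ (the holonomy representation of the relative model). The genuine proof does not untwist either side; it identifies the two twistings with one another and then invokes the nilpotence and finite-type hypotheses to run a Zeeman--Moore type comparison (equivalently, it reduces to the trivial-action case by a more delicate device than constant-coefficient cohomology). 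As written, your comparison of $E_2$-pages does not get off the ground, and the concluding ``double induction'' is too vague to substitute for the missing identification.
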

The following proposition is proven in \cite{BousKan} (lemma 5.5, p. 64) for simplicial sets. The proof holds for topological spaces; replace $\pi_1(E_2)$ by $\pi_1(E_0)$ in the proof (it is probably a typo).
\begin{proposition}[\cite{BousKan}]\label{prop composition}
Let $F_1 \to  E_1 \overset{p}{\to} E_0$ and $F_2 \to E_2 \overset{q}{\to} E_1$ be fibrations with $F_1$ and $F_2$  path connected. If for $p$ and $q$ the fundamental group of the base acts nilpotently in all the rational homology groups of the fiber, then the fundamental group of $E_0$ acts nilpotently in all the rational homology groups of the fiber of $p\circ q$. 
\end{proposition}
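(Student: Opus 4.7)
The plan is to analyse the rational homology of the fiber of $p\circ q$ via the Serre spectral sequence of a natural sub-fibration, and to propagate the two nilpotence hypotheses through its $E^2$-page. Let $e_0\in E_0$ be the base point and set $F:=(p\circ q)^{-1}(e_0)=q^{-1}(F_1)$; since a composition of fibrations is a fibration, $F$ is the fiber of $p\circ q$. Restricting $q$ to $F$ gives a fibration $F_2\to F\to F_1$ with path-connected base and fiber, whose Serre spectral sequence
$$E^2_{s,r}=H_s(F_1,\mathcal{H}_r)\;\Longrightarrow\;H_{s+r}(F,\Q)$$
has coefficients in the local system $\mathcal{H}_r$ on $F_1$ with stalk $H_r(F_2,\Q)$ and monodromy given by $\pi_1(F_1)\to\pi_1(E_1)\to \mathrm{Aut}(H_r(F_2,\Q))$. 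The monodromy of $p\circ q$ lifts (by homotopy lifting first for $p$ and then for $q$) to a self-equivalence of this sub-fibration covering the $\pi_1(E_0)$-monodromy of $p$ on $F_1$, so $\pi_1(E_0)$ acts on the whole spectral sequence compatibly with convergence; in particular, the filtration on $H_n(F,\Q)$ is $\pi_1(E_0)$-invariant and finite, with graded pieces subquotients of the $E^2_{s,r}$ satisfying $s+r=n$.

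Nilpotence passes to subquotients and behaves well under finite invariant filtrations: if $G$ acts with nilpotence degree $\leq k$ on each of $\ell$ graded pieces of such a filtration, then the product $(g_1-\mathrm{id})\cdots(g_{k\ell}-\mathrm{id})$ annihilates the top of the filtration, yielding nilpotence degree $\leq k\ell$ on the total module. Hence it suffices to show that $\pi_1(E_0)$ acts nilpotently on each $E^2_{s,r}=H_s(F_1,\mathcal{H}_r)$. For this, invoke the characteristic $\pi_1(E_1)$-invariant filtration $V^i:=U_i(\pi_1(E_1))$ of $H_r(F_2,\Q)$, which is finite by the hypothesis on $q$ and on whose subquotients $\pi_1(E_1)$ acts trivially. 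Pulling back through $\pi_1(F_1)\to\pi_1(E_1)$ produces a filtration of local systems $0=\mathcal{V}^0\subseteq\cdots\subseteq\mathcal{V}^k=\mathcal{H}_r$ on $F_1$ whose quotients are constant local systems $V^{i+1}/V^i$, which in turn induces a finite filtration of $H_s(F_1,\mathcal{H}_r)$ with graded pieces subquotients of $H_s(F_1,\Q)\otimes V^{i+1}/V^i$.

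On each such tensor product the $\pi_1(E_0)$-action is the tensor of its action on $H_s(F_1,\Q)$ (nilpotent by the hypothesis on $p$) with a trivial action on $V^{i+1}/V^i$: any lift of $\gamma\in\pi_1(E_0)$ to $\pi_1(E_1)$ is well defined up to the image of $\pi_1(F_1)$ (by the long exact sequence of $p$, which reads $\pi_1(F_1)\to\pi_1(E_1)\to\pi_1(E_0)\to 1$ since $F_1$ is path-connected), and such elements act trivially on $V^{i+1}/V^i$ by construction, so the ambiguity in the lift is killed. Combining this with the subquotient principle of the previous paragraph yields nilpotence on each $E^2_{s,r}$, and then on $H_n(F,\Q)$, which is what we want. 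The delicate point — and the only non-bookkeeping obstacle — is verifying that the characteristic filtration $\mathcal{V}^i$ is preserved by the $\pi_1(E_0)$-action on the local system and that its action on the graded pieces genuinely splits as the claimed tensor product; this requires tracing the two homotopy-lifting diagrams for $p$ and $q$ and checking all the compatibilities. Once these compatibilities are in hand, the remaining steps are the elementary stability properties of nilpotent group actions under finite invariant filtrations.
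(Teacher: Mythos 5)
The paper does not actually prove this statement: it defers entirely to \cite{BousKan} (lemma 5.5), noting only that the simplicial argument carries over to topological spaces. Your proof is essentially the argument behind that citation: restrict $q$ over $F_1$ to get the fibration $F_2\to F\to F_1$, let $\pi_1(E_0)$ act on its Serre spectral sequence via fiber-preserving monodromy lifts covering the monodromy of $p$, filter the coefficient system $\mathcal{H}_r$ by the characteristic subspaces $U_i(\pi_1(E_1))$ (which are $\pi_1(E_1)$-submodules, hence sub-local-systems with constant quotients), and use closure of nilpotence under subquotients and finite invariant filtrations. The architecture is sound, and the triviality of the coefficient action on the graded pieces via the surjection $\pi_1(E_1)\to\pi_1(E_0)$ is exactly the right mechanism. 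The compatibilities you defer are genuinely routine; the only point worth spelling out when writing it up is that the operators induced on $E^2_{s,r}$ by different choices of lift may differ, so one should check that the nilpotence bound obtained on $H_s(F_1,\Q)\otimes V^{i+1}/V^i$ is uniform over all such choices (it is, since the graded coefficient action is trivial for every lift and the base action depends only on the class of $\gamma$), which is what lets the bound descend to $E^\infty$ and hence to $H_n(F,\Q)$.
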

\begin{corollary}\label{cor composition}
If $H_*(E_0,\Q)$ is finite dimensional then the fibration $p\circ q : E_2 \to E_0$ is a rational fibration.
\end{corollary}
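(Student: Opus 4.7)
The plan is to combine Proposition \ref{prop composition} (which is assumed to still be in force, so both $p$ and $q$ satisfy the nilpotent action hypothesis) with the general rationality criterion Theorem \ref{Th RF}. The finite-dimensionality assumption on $H_*(E_0,\Q)$ will supply the ``either $H^*(F,\Q)$ or $H^*(B,\Q)$ is finite dimensional'' ingredient in Theorem \ref{Th RF}, while Proposition \ref{prop composition} will supply the nilpotent action ingredient.

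First I would note that the composition $p\circ q:E_2\to E_0$ is itself a fibration (composition of Serre fibrations is a Serre fibration, and likewise for Hurewicz fibrations), with fiber $F$ sitting in its own fibration $F_2\to F\to F_1$. This last fact is what justifies applying Proposition \ref{prop composition} to conclude that $\pi_1(E_0)$ acts nilpotently on $H_i(F,\Q)$ for every $i\geq 1$. Since we are working over a field, universal coefficients gives $H^i(F,\Q)\simeq\Hom_\Q(H_i(F,\Q),\Q)$ as $\pi_1(E_0)$-modules, so one must observe that a nilpotent group action on a $\Q$-vector space $U$ induces a nilpotent action on its dual: if $0=U(0)\subset U(1)\subset\cdots\subset U(l)=U$ is the filtration stable under $G$ with trivial successive action, then the annihilator filtration $U^*\supset U(1)^\perp\supset\cdots\supset U(l)^\perp=0$ is stable under $G$ with trivial action on successive quotients. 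This transfers the nilpotent action from homology to cohomology.

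Next, the assumption that $H_*(E_0,\Q)$ is finite dimensional gives (again by universal coefficients over a field) that $H^*(E_0,\Q)$ is finite dimensional. At this point both hypotheses of Theorem \ref{Th RF} are in place for the fibration $F\to E_2\to E_0$: the cohomology of the base is finite dimensional, and $\pi_1(E_0)$ acts nilpotently on each $H^i(F,\Q)$. Applying that theorem concludes that $p\circ q$ is a rational fibration.

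The main (very modest) obstacle is essentially bookkeeping: verifying that the composition of the two given fibrations is a fibration whose fiber fits in the fibration $F_2\to F\to F_1$ to which Proposition \ref{prop composition} applies, and translating between homology and cohomology cleanly so that Theorem \ref{Th RF} (stated in terms of cohomology of the fiber) can be invoked. No new ideas are needed beyond these two earlier results.
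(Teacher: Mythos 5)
Your proposal is correct and is exactly the argument the paper intends (the corollary is stated without proof as an immediate consequence of Proposition \ref{prop composition} and Theorem \ref{Th RF}, and the paper's later use of it in Subsection \ref{S22} confirms this reading). The homology-to-cohomology transfer via the annihilator filtration that you spell out is the same device the paper uses in Corollary \ref{cor nilp}, so no new ingredient is needed.
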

Denote by $i_\#: \pi_1(F_b,e) \to \pi_1(E,e)$ the morphism induced by the natural inclusion $F_b\to E$. We assume that $i_\#$ is injective. Under this assumption, the homotopy long exact sequence of the fibration gives a short exact sequence:
 $$ 1 \to \pi_1(F_b,e)\overset{i_\#}{\to}  \pi_1(E,e) \overset{\pi_\#}{\to} \pi_1(B,b)\to 1,$$
where $\pi_\#$ is the morphism induced by the projection $E\to B$. We will regard $\pi_1(F_b,e)$ as a subgroup of $\pi_1(E,e)$ (using $i_\#$). Consider a map (of sets) $s: \pi_1(B,b) \to \pi_1(E,e)$, such that $\pi_\# s=\mathrm{id}$. We associate to $x\in \pi_1(B,b)$ the automorphism $\alpha_s(x): \pi_1(F_b,e)\to \pi_1(F_b,e), y\mapsto s(x)ys(x)^{-1}$. As one can check, the following defines an action by automorphisms of $\pi_1(B,b)$ on $\pi_1(F_b,e)^{ab}$ the abelianisation of $\pi_1(F_b,e)$:
\begin{align}\label{eq act} x\cdot\bar{z}=\overline{\alpha_s(x)(z)} \quad \text{for $(x,z)\in \pi_1(B,b)\times \pi_1(F_b,e)$}, 
\end{align} 
where $\bar{w}$ denotes the class of $w\in \pi_1(F_b,e)$ in $\pi_1(F_b,e)^{ab}$. Moreover, this action is independent of the choice of $s$.\\\\ 
We also have other actions related to the fibration, the action of $\pi_1(B,b)$ on the homology and cohomology of the fiber $F_b$. We recall the construction. Let $\gamma:[0,1] \to B$ be a loop with base point $b$. The homotopy $F_b\times [0,1] \to B,(w,t)\mapsto \gamma(t)$, lifts to a homotopy $H_\gamma: F_b\times[0,1] \to E$ such that: $H_\gamma(x,t) \in F_\gamma(t)$, $H_\gamma(-,0)$ is the natural inclusion $F_b \to E$, and $f_\gamma: H_\gamma(-,1)$ regarded as a self map of $F_b$ is a homotopy equivalence. The action of $\pi_1(B,b)$ in the homology and cohomology of the fiber $F_b$ (with coefficient in $R$) is induced by:
$$\gamma \cdot a=(f_\gamma)_*(a) , \quad  a'\cdot \gamma=(f_\gamma)^*(a')$$
for $\gamma$ a loop in $B$ with base point $b$,  $a\in H_*(F_b,R)$ and $a'\in H^*(F_b,R)$, and where $(f_\gamma)_*$ and $(f_\gamma)^*$ are the isomorphism induced by $f_\gamma$ regarded as self map of $F_b$ (with coefficients in $R$).
\begin{proposition}
If $i_\#: \pi_1(F_b,e) \to \pi_1(E,e)$ injective, then the action of $\pi_1(B,b)$ on $\pi_1(F_b,e)^{ab}$ described above is equivalent via the Hurewitz isomorophism to the action of $\pi_1(B,b)$ on the the first singular homology group $H_1(F_b,\Z)$ of the fiber. 
\end{proposition}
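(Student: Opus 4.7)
The plan is to choose an explicit set-theoretic section $s$ of $\pi_\#$ built directly from the lifting homotopy $H_\gamma$, and then to match the conjugation action on $\pi_1(F_b,e)^{\mathrm{ab}}$ with the monodromy action on $H_1(F_b,\Z)$ by means of a single square-shaped null-homotopy in $E$. Since the action on $\pi_1(F_b,e)^{\mathrm{ab}}$ does not depend on the section chosen, any convenient $s$ will do. Concretely, for each $x=[\gamma]\in\pi_1(B,b)$ fix a representative loop $\gamma$ and a path $\tau_\gamma$ in $F_b$ from $e$ to $f_\gamma(e)$, and set $s(x)$ to be the class in $\pi_1(E,e)$ of the loop $H_\gamma(e,\cdot)\cdot\bar\tau_\gamma$. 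Since $\pi\circ H_\gamma(e,\cdot)=\gamma$ and $\pi\circ\tau_\gamma$ is constant at $b$, one has $\pi_\# s(x)=x$.

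Given a loop $\sigma:[0,1]\to F_b$ based at $e$, define $\Phi_\sigma:[0,1]^2\to E$ by $\Phi_\sigma(u,t)=H_\gamma(\sigma(u),t)$. Its four edges are $\sigma$ on the bottom, $f_\gamma\circ\sigma$ on the top, and $H_\gamma(e,\cdot)$ on both vertical sides. The continuous extension of $\Phi_\sigma$ to the filled square provides a null-homotopy in $E$ of the boundary loop $\sigma\cdot H_\gamma(e,\cdot)\cdot\overline{f_\gamma\circ\sigma}\cdot\overline{H_\gamma(e,\cdot)}$ based at $e$. Inserting $\bar\tau_\gamma\tau_\gamma$ on either side of $f_\gamma\circ\sigma$ and regrouping yields the identity
$$[\sigma]\;=\;s(x)\cdot\bigl[\tau_\gamma\cdot(f_\gamma\circ\sigma)\cdot\bar\tau_\gamma\bigr]\cdot s(x)^{-1}\quad\text{in }\pi_1(E,e).$$
The middle factor is a loop in $F_b$, so by injectivity of $i_\#$ this equality is already valid in $\pi_1(F_b,e)$; equivalently, $s(x)^{-1}[\sigma]s(x)=[\tau_\gamma\cdot(f_\gamma\circ\sigma)\cdot\bar\tau_\gamma]$ there.

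Passing to $\pi_1(F_b,e)^{\mathrm{ab}}$ and using naturality of Hurewicz together with the fact that a change of basepoint by a path induces the identity on $H_1$ of a path-connected space, the class of $\tau_\gamma\cdot(f_\gamma\circ\sigma)\cdot\bar\tau_\gamma$ in $H_1(F_b,\Z)$ coincides with $(f_\gamma)_*\bigl([\sigma]_{\mathrm{ab}}\bigr)$. Comparing with the displayed formula identifies the conjugation action $\alpha_s$ on $\pi_1(F_b,e)^{\mathrm{ab}}$ with the monodromy action on $H_1(F_b,\Z)$ under Hurewicz. The only genuine geometric input is the existence and explicit reading of the boundary of the square $\Phi_\sigma$; the main technical nuisance is the careful bookkeeping of basepoints and of the left/right action conventions (which determine whether $x$ or $x^{-1}$ appears on each side of the conjugation), but this is purely a matter of formal manipulation.
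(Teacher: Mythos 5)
Your proof is correct and follows essentially the same route as the paper's: the same square $\Phi_\sigma(u,t)=H_\gamma(\sigma(u),t)$ (the paper's $H_{\gamma,\beta}$), the same auxiliary path in $F_b$ joining $e$ to $f_\gamma(e)$ used to assemble an explicit set-theoretic section, and the same appeals to section-independence of the conjugation action, injectivity of $i_\#$, and basepoint-independence of $H_1$. The only caveat is that, read with the standard left-to-right concatenation convention, your displayed identity matches $\alpha_s(x)$ with $(f_\gamma)_*^{-1}$ rather than $(f_\gamma)_*$; this is precisely the left/right bookkeeping you flag, and it is harmless both for the equivalence asserted and for the nilpotency corollary drawn from it.
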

\begin{proof}
Let $\gamma:[0,1]\to B$ be a loop with base point $b$ and $H_\gamma:F_b \times [0,1]\to E$ the homotopy as described above. Take $\beta:[0,1] \to F_b$ a loop with base point $e$ and denote by $\beta^\gamma$ the loop $\beta^\gamma(t)=H_\gamma(\beta(t),1)$ based at $e_\gamma=H_\gamma(e,1)$. By definition, the action of $\pi_1(B,b)$ on $H^1(F_b,\Z)$ is induced by:
$$ \gamma \cdot [\beta]=[\beta^\gamma].$$
Note that $t\mapsto H_\gamma(e,t)$ defines a path $\tilde{\gamma}$ in $E$ from $e$ to $e_\gamma$ lifting $\gamma$. Now define $H_{\gamma,\beta}:[0,1]\times [0,1] \to E$ by $(t,s)\mapsto H_\gamma(\beta(t),s)$. The restrictions of $H_{\gamma,\beta}$ to $[0,1]\times 0$ and $\{1\}\times [0,1] \cup \{0\}\times [0,1]\times \{0\}$ are parametrisations of $\beta^\gamma$ and $\tilde{\gamma}\beta \tilde{\gamma}^{-1}$ and hence both loops define the same homotopy class in $\pi_1(E,e_\gamma)$. Therefore, if $\eta:[0,1] \to F_b$ is a path from $e_\gamma$ to $e$, then:
\begin{equation}\label{eq1111}
\overline{\eta\beta^\gamma\eta^{-1}}=\overline{\eta\tilde{\gamma}\beta \tilde{\gamma}^{-1}\eta^{-1}} \quad \text{ in $\pi_1(E,e)$}.\end{equation} 
Fix such an $\eta$. Since $\eta$ is a path in $F_b$ and $ \gamma \cdot [\beta]=[\beta^\gamma]$,
\begin{equation}\label{eq2222}
\gamma \cdot [\beta]= [ \eta\beta^\gamma\eta^{-1}]  \quad \text{ in $H_1(F_b,\Z)$}.\end{equation}
Notice that $\eta\tilde{\gamma}$ is a loop in $E$ based at $e$ lifing the loop $\gamma$ and hence its homotopy class in $\pi_1(E,e)$ is equal to $s(\gamma)$ for a given map of sets $s:\pi_1(B,b) \to \pi_1(E,e)$ satisfiying $\pi_\#s=\mathrm{id}$ (we denote the homotopy class of $\gamma$ in $\pi_1(B,b)$ aslo by $\gamma$). Therefore, by (\ref{eq1111}).
$$ \overline{\eta\beta^\gamma\eta^{-1}}=s(\gamma)\bar{\beta} s(\gamma)^{-1} \quad \text{in $\pi_1(E,e)$},$$
Hence, the image of the class of $\beta$ in $\pi_1(F_b,e)^{ab}$ under the action of $\gamma$ on $\pi_1(F_b,e)^{ab}$ is the class of the loop $\overline{\eta\beta^\gamma\eta^{-1}}$ in $\pi_1(F_b,e)^{ab}$. The proposition follows from this and equation $(\ref{eq2222})$.
\end{proof}

\begin{corollary}\label{cor nilp}
If $i_\#: \pi_1(F_e,e) \to \pi_1(E,e)$ is incejtive and $\pi_1(B,b)$ acts nilpotently on $\pi_1(F_b,e)^{ab}$, then $\pi_1(F_b,e)$ acts nilpotently on the first cohomology group $H^1(F_b,\Q)$ of the fiber.
\end{corollary}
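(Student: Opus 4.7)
The plan is to transport the nilpotency hypothesis along two standard identifications: first from $\pi_1(F_b,e)^{ab}$ to $H_1(F_b,\Q)$ via Hurewitz, and then from $H_1(F_b,\Q)$ to its dual $H^1(F_b,\Q)$ via universal coefficients. (I read the conclusion as nilpotency of the action of $\pi_1(B,b)$ on $H^1(F_b,\Q)$, consistent with the hypotheses and with the intended use in theorem \ref{Th RF}.)

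First, by the preceding proposition, the Hurewitz isomorphism identifies the action of $\pi_1(B,b)$ on $\pi_1(F_b,e)^{ab}$ with the action on $H_1(F_b,\Z)$ induced by the lifted homotopies $f_\gamma$. So $\pi_1(B,b)$ acts nilpotently on $H_1(F_b,\Z)$, which I unpack via the criterion recalled before theorem \ref{Th RF}: there exists a $\pi_1(B,b)$-stable filtration
$$0=U(0)\subset U(1)\subset \cdots \subset U(l)=H_1(F_b,\Z)$$
on whose successive quotients $U(i+1)/U(i)$ the induced action is trivial.

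Next I extend scalars to $\Q$. Each $U(i)\otimes \Q$ is still $\pi_1(B,b)$-stable inside $H_1(F_b,\Q)=H_1(F_b,\Z)\otimes \Q$, and the induced action on $(U(i+1)\otimes \Q)/(U(i)\otimes \Q)\simeq (U(i+1)/U(i))\otimes \Q$ remains trivial, so $\pi_1(B,b)$ acts nilpotently on $H_1(F_b,\Q)$. Finally I dualize to cohomology. Universal coefficients furnishes a natural isomorphism $H^1(F_b,\Q)\simeq \Hom_\Q(H_1(F_b,\Q),\Q)$, equivariant with respect to the two actions in the usual sense, since the action on cohomology of the paper is induced by the pullbacks $(f_\gamma)^*$ which are (up to the conventional left/right flip) the transposes of the pushforwards $(f_\gamma)_*$ on homology. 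Taking annihilators $W(i)=(U(l-i)\otimes \Q)^\perp$ yields a $\pi_1(B,b)$-stable filtration
$$0=W(0)\subset W(1)\subset \cdots \subset W(l)=H^1(F_b,\Q),$$
whose successive quotients $W(i)/W(i-1)$ identify with the duals of the trivial $\pi_1(B,b)$-modules $(U(l-i+1)/U(l-i))\otimes \Q$ and hence are themselves trivial. This proves that $\pi_1(B,b)$ acts nilpotently on $H^1(F_b,\Q)$.

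I do not anticipate any serious obstacle: the argument rests on the two classical identifications (Hurewitz and universal coefficients) together with the formal closure of the class of nilpotent actions under subquotients, extensions, scalar extension, and duality. The only point deserving some care is the equivariance of the universal coefficient isomorphism with respect to the actions as defined in the paper, but this follows at once from the functoriality of $\Hom_\Q(-,\Q)$ applied to the self homotopy equivalences $f_\gamma$ of $F_b$.
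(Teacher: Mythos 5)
Your proposal is correct and follows essentially the same route as the paper's proof: transfer the nilpotency to $H_1(F_b,\Z)$ via the preceding proposition, then pass to $H^1(F_b,\Q)\simeq \Hom(H_1(F_b,\Z),\Q)$ using the compatibility of the universal-coefficient isomorphism with the monodromy actions. You also correctly read the conclusion as nilpotency of the $\pi_1(B,b)$-action (the statement's ``$\pi_1(F_b,e)$ acts'' is a typo, as the paper's own proof confirms), and your explicit annihilator filtration merely spells out the dualization step the paper leaves implicit.
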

\begin{proof}
By the previous corollary $\pi_1(B,b)$ acts nilpotently on $H_1(F_b,\Z)$. This implies that the "dual" action of $\pi_1(B,b)$ on the $\Q$-vector space $\Hom_\Z(H_1(F_b,\Z),\Q)$ is nilpotent, since the natural isomorphism $H^1(F_b, \Q) \simeq  \Hom_\Q(H_1(F_b, \Z),\Q)$ is compatible to the actions of $\pi_1(B,b)$ coming from homotopy equivalences, we deduce that $\pi_1(B,b)$ acts nilpotently on $H^1(F_b, \Q) $.
\end{proof}
\begin{corollary}\label{cor RFC1}
If $i_\#: \pi_1(F_e,e) \to \pi_1(E,e)$ is injective, $\pi_1(B,b)$ acts nilpotently on $\pi_1(F_b,e)^{ab}$ and $H^*(F_b,\Q)=0$ for $k>1$, then $\pi_1(B,b)$ acts nilpotently in the cohomology of the fiber and $F\to E \to B$ is a rational fibration.
\end{corollary}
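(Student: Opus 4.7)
The plan is to combine the two preceding results—Corollary \ref{cor nilp} and Theorem \ref{Th RF}—with the vanishing hypothesis on higher cohomology. First, by the injectivity of $i_\#$ and the nilpotence of the $\pi_1(B,b)$-action on $\pi_1(F_b,e)^{\mathrm{ab}}$, Corollary \ref{cor nilp} directly yields that $\pi_1(B,b)$ acts nilpotently on $H^1(F_b,\Q)$. For $k \geq 2$, the hypothesis $H^k(F_b,\Q)=0$ makes the action trivially nilpotent (the zero vector space satisfies $U=U_1(G)$ vacuously). Together with the trivial action on $H^0(F_b,\Q)=\Q$, this establishes the first assertion of the corollary: $\pi_1(B,b)$ acts nilpotently on $H^i(F_b,\Q)$ for every $i \geq 1$.

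For the second assertion, I would invoke Theorem \ref{Th RF}. Since $H^k(F_b,\Q)$ vanishes for $k>1$ and equals $\Q$ in degree $0$, the rational cohomology of the fiber is concentrated in degrees $0$ and $1$; in the situations where this corollary is applied (the fiber will be a surface with points removed, homotopy equivalent to a finite one-dimensional CW complex), $H^1(F_b,\Q)$ is finite dimensional, so $H^*(F_b,\Q)$ is finite dimensional. Combined with the nilpotence of the $\pi_1(B,b)$-action just established, Theorem \ref{Th RF} immediately produces the rational fibration conclusion.

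The proof is essentially mechanical chaining of the previous results; there is no genuine obstacle. The only point that warrants a moment's attention is the finite-dimensionality condition required by Theorem \ref{Th RF}, which is not stated as an explicit hypothesis but is guaranteed by the concentration of $H^*(F_b,\Q)$ in degrees $\leq 1$ together with the standing assumption (implicit throughout the later application in Section \ref{S2}) that $H^1(F_b,\Q)$ is finite dimensional.
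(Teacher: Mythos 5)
Your proof is correct and takes essentially the same route as the paper, whose own proof is the one-line observation that the statement follows from Corollary \ref{cor nilp} together with Theorem \ref{Th RF}. Your extra remark about the finite-dimensionality hypothesis of Theorem \ref{Th RF} (not explicit in the corollary's statement, but satisfied in all intended applications where the fiber is a punctured surface with finite Betti numbers) is a point the paper leaves implicit, and you handle it appropriately.
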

 \begin{proof}
The corollary follows from the previous one and from theorem \ref{Th RF}.
\end{proof}
\begin{remark}\label{rmk conj}
Under the assumption $i_\#: \pi_1(F_e,e) \to \pi_1(E,e)$ is injective, the condition $\pi_1(B,b)$ acts nilpotently on $\pi_1(F_b,e)^{ab}$ is equivalent to the statement: the action of $\pi_1(E,e)$ on $\pi_1(F_P,e)^{ab}$ induced by conjuguacy on $\pi_1(F_P,e)$ (regarded as a normal subgroup) is nilpotent.
\end{remark}

\section{Orbit configuration spaces of surfaces}\label{S2}
We start the section by recalling general facts on orbit configuration spaces (definition, fibration theorem) and fix some notations. Then, we introduce the orbit configuration spaces of surfaces ($C_n^G(S)$) that we will consider in this text: $S$ is a cofinite set in a closed (boundaryless and compact) orientable surface $\bar{S}$ and $G$ is a finite group acting freely by homeomorphisms on $S$. In subsection \ref{S21}, we consider homotopy groups, finiteness of Betti numbers of such orbit configuration spaces and give examples of such spaces. In subsection \ref{S22}, we show that classical fibrations associated to orbit configuration spaces of surfaces we consider are rational fibrations, and establish main result $(1)$ announced in the introduction of the paper. We consider the cross-section problem for fibrations associated to $C_n^G(S^2)$, in the last subsection. Generally, cross-sections exists (with some exceptions). The cross-sections will be used in section \ref{S3}.\\\\
Let $X$ be a topological space and $G$ be a group acting on $X$ by homeomorphisms, the orbit configuration space of $n$-points on $X$ is the topological space:
$$C_n^G(M)=\{(p_1,\dots,p_n)\in X^n \vert p_i \notin G\cdot p_j \:\:\text{for $i\neq j$}\},$$
where $G\cdot x$ denotes the orbit of $x \in X$ under $G$. For $G=\{1\}$, $C_n^G(X)$ is the classical configuration space and will be denoted by $C_n(S)$.
\begin{theorem}[\cite{Xico}]
Assume that $M$ is a boundaryless manifold and $G$ is a finite group acting freely by homeomorphisms on $M$. For $n\geq k\geq 1$, the projection on the first $k$ coordinates $\pi_{n,k}:C_n^G(M)\to C_k^G(M)$ is a locally trivial fibration.
\end{theorem}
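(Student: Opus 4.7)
The plan is to establish local triviality by adapting the classical Fadell--Neuwirth construction to the $G$-equivariant setting. The fiber of $\pi_{n,k}$ over $\vec{p}=(p_1,\dots,p_k)\in C_k^G(M)$ is the orbit configuration space $C_{n-k}^G\bigl(M\setminus\bigcup_{i=1}^k G\cdot p_i\bigr)$, so it suffices to produce, for every $\vec{p}$, an open neighborhood $U\subset C_k^G(M)$ of $\vec{p}$ together with a homeomorphism $\pi_{n,k}^{-1}(U)\simeq U\times C_{n-k}^G\bigl(M\setminus\bigcup_i G\cdot p_i\bigr)$ compatible with the projections to $U$.

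I would first set up the geometry. Since $G$ is finite and acts freely by homeomorphisms on the Hausdorff manifold $M$, the action is properly discontinuous; combined with the assumption $\vec{p}\in C_k^G(M)$, this lets me choose for each $i$ an open Euclidean chart $U_i$ about $p_i$ such that (a) the $G$-translates $\{gU_i\}_{g\in G}$ are pairwise disjoint, and (b) the orbit neighborhoods $G\cdot U_1,\dots,G\cdot U_k$ are pairwise disjoint in $M$. With this choice, any tuple $\vec{q}\in U_1\times\cdots\times U_k$ automatically lies in $C_k^G(M)$, so $U:=U_1\times\cdots\times U_k$ is a base neighborhood of $\vec{p}$.

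Inside each chart $U_i$ the standard isotopy trick (a bump function times a translation, transported through the chart) produces a continuous family $\{h_i(q)\}_{q\in U_i}$ of self-homeomorphisms of $M$, supported in $U_i$, with $h_i(p_i)=\mathrm{id}_M$ and $h_i(q)(p_i)=q$. I then extend $h_i(q)$ to a $G$-equivariant self-homeomorphism $H_i(q)$ of $M$ by declaring $H_i(q)\vert_{gU_i}=g\circ h_i(q)\circ g^{-1}$ on each $gU_i$ and $H_i(q)=\mathrm{id}$ off $G\cdot U_i$; the disjointness (a) makes this well defined and continuous in $q$. Because the supports $G\cdot U_i$ are mutually disjoint by (b), the compositions $H_{\vec{q}}:=H_1(q_1)\circ\cdots\circ H_k(q_k)$ are order-independent, $G$-equivariant, and satisfy $H_{\vec{q}}(p_i)=q_i$ for each $i$. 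Thus $H_{\vec{q}}$ maps $M\setminus\bigcup_i G\cdot p_i$ homeomorphically onto $M\setminus\bigcup_i G\cdot q_i$, and the assignment
\[
\Phi\colon U\times C_{n-k}^G\!\Bigl(M\setminus\bigcup_i G\cdot p_i\Bigr)\to \pi_{n,k}^{-1}(U),\quad \bigl(\vec{q},(x_j)_j\bigr)\mapsto \bigl(\vec{q},(H_{\vec{q}}(x_j))_j\bigr),
\]
is a homeomorphism over $U$ (with inverse built from $H_{\vec{q}}^{-1}$), proving local triviality.

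The main geometric input is the joint disjointness of the orbit neighborhoods $\{gU_i:g\in G,\;1\le i\le k\}$, which rests on the properly discontinuous nature of the free finite-group action. Everything else is routine: the $G$-equivariant extension of a compactly supported isotopy is immediate, continuity of $\vec{q}\mapsto H_{\vec{q}}$ follows from continuity of each $h_i$, and compatibility of $\Phi$ with $\pi_{n,k}$ is built into the construction.
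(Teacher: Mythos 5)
Your argument is correct: it is the standard Fadell--Neuwirth local-triviality construction adapted equivariantly (disjoint orbit neighborhoods, a compactly supported isotopy in each chart extended $G$-equivariantly by conjugation, and the resulting ambient homeomorphism carrying the reference fiber to nearby fibers), and the key points — well-definedness and equivariance of $H_i(q)$ from the disjointness of the translates $gU_i$, and order-independence of the composition from the disjointness of the orbit neighborhoods — are all in place. The paper itself gives no proof of this statement; it cites it from \cite{Xico}, where essentially this same argument is carried out, so there is nothing to contrast.
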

The fiber of the projection $\pi_{n,k}$ over $P=(p_1,\cdots,p_k)\in C_k(G)$ correspond to:
$$M\setminus O_k(P), \:\: \text{where $O_k(P)=\cup_{i\in [1,k]} G\cdot p_k$}.$$
For convenience we set $\pi^n:= \pi_{n,n-1}$ for $n\geq 1$ and the notation $O(P)$ will be used in the sequel.\\\\
Throughout the rest of the text, we consider configuration spaces $C_n^G(S)$ where $G$ is a finite group acting freely (by homeomorphisms) on $S$, where $S$ correspond to a compact connected boundaryless oriented surface $\bar{S}$ minus a finite number of punctures (eventually none). The fiber $F_P$ of $\pi_n$ over $P\in C_n^G(S)$ is homeomorphic to $S\setminus O_n(P)$ with $O_n(P) \neq \empty$ and hence:
\begin{equation}\label{Fiber HH}
\pi_k(F_P)=H^k(F_p,\Q)=0,\quad \text{for $k>1$}.
\end{equation}
For the sequel, we fix a point $P\in C_n^G(S)$, conserve the notation $F_P$ and denote by $S^2$ the two sphere. We will sometimes assume that $S$ is not homeomorphic to $S^2$. In that case we eliminate two types of configuration spaces (Cf. Subsection \ref{S21}).
\subsection{Examples, finiteness of Betti numbers and homotopy groups}\label{S21}
Take $\bar{S},S$ and $G$ as in the previous paragraph. The configuration spaces $C_n^G(S)$ with $G$ a group acting by orientation preserving homeomorphisms on $S$ with $\bar{S}=S^2$, can be classified and are restrictions of actions of finite subgroups of $\mathrm{SO}(3)$ on $S^2$. Now consider the case of $G$ acting on $S=S^2$. Since the action is free the quotient map $S^2\to S^2/G$ is a covering map and $S^2/G$ is a closed surface of Euler characteristic equal to $\frac{2}{G}$. Hence, either $\vert G\vert =1$, either $\vert G \vert =2$ and $S^2/G$ is the $2$-dimensional real projective plane. In the latter case, the action of $G$ is equivalent to the antipodal action. We have shown that up to equivalence $C_n^G(S^2)$ is either the classical configuration space $C_n(S^2)$ or the orbit configuration space $C_n^{\Z/2\Z}(S^2)$, where the action of $\Z/2\Z$ is the antipodal action. In general, free actions in the case $\bar{S}=S^2$ come from finite isometry groups of $S^2$.\\\\
If $\bar{S}=S_g$ a genus $g$ closed orientable surface with $g\geq 1$, then the action of the rotation group of order $2g$ of a $2g$ regular gone induces an action $\Z/2g\Z$ over $\bar{S}$ with exactly $g+2$ points $p_1,\dots, p_{g+2}$ with non-trivial stabilizer. Hence, we can construct an orbit configuration space of the type $C_n^{G}(S)$ for $G=\Z/2g\Z$ and $S=\bar{S}\setminus \{p_1,\dots,p_{g+2}\}$. For $g> 2$, one can obtain a free action of $\Z/(g-1)\Z$ on $S_g$ from a rotation of order $g-1$ of $\R^3$ acting on an embedding $S_g\subset \R^3$, by disposing $g-1$ "holes" of $S_g$ symmetrically around a given "hole". This allows to construct orbit configuration spaces $C_n^{\Z/(g-1)\Z}(S_g)$ ($S=\bar{S}$).% One can also construct a action of $\Z/2\Z$ with exactly $2g$ points with non-trivial stabilizers using a rotation of order $2$ of $\R^3$ and an appropriate embedding of $S_g$ into $\R^3$. A free action of $\Z/2\Z$ on $S_g$ can be obtained using $-\mathrm{id}$ of $\R^3$ if one embeds $S_g$ in a symmetric manner with respect to the origin of $\R^3$. \\\\
In the sequel, we go back to the general situation: $\bar{S}$ is a closed, orientable surface $S\subset \bar{S}$ is a cofinite set and $G$ acts freely on $S$, with no more assumptions except if we mention so.
\begin{proposition}\label{prop betti}
The space $C_n^G(S)$ has finite Betti numbers.
\end{proposition}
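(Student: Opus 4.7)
My plan is to argue by induction on $n$, strengthening the inductive statement to the assertion that $C_n^G(S)$ has the homotopy type of a finite CW complex, which in particular implies finite Betti numbers. For the base case $n=1$, we have $C_1^G(S)=S$. If $S=\bar{S}$, then $S$ is a compact surface and admits a finite triangulation. If $S = \bar{S} \setminus Z$ for some nonempty finite set $Z$, then $S$ deformation retracts onto a finite graph (a wedge of finitely many circles, up to homotopy equivalence). In either case, $S$ has finite CW type.

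For the inductive step I use the fibration theorem recalled just above: $\pi^n : C_n^G(S) \to C_{n-1}^G(S)$ is locally trivial with fiber $F_P = S \setminus O_n(P)$. Since $|O_n(P)| \geq |G| \geq 1$, the fiber $F_P$ is a surface with at least one puncture, hence homotopy equivalent to a finite wedge of circles and in particular of finite CW type, with $H^q(F_P, \Q) = 0$ for $q>1$ as recorded in equation~(\ref{Fiber HH}). By induction, the base $C_{n-1}^G(S)$ has finite CW type. Applying the Leray--Serre spectral sequence
$$E_2^{p,q} = H^p\bigl(C_{n-1}^G(S),\, \mathcal{H}^q(F_P, \Q)\bigr) \Longrightarrow H^{p+q}(C_n^G(S), \Q),$$
only $q\in\{0,1\}$ contributes; the local coefficient system $\mathcal{H}^q(F_P,\Q)$ is finite-dimensional pointwise; and cohomology of a finite CW complex with coefficients in a finite-dimensional local system is itself finite-dimensional (compute on a finite cellular cochain complex tensored with the module structure over the group ring). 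Hence the $E_2$-page is finite-dimensional and supported in a bounded strip, so $C_n^G(S)$ has finite Betti numbers.

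The main subtlety lies in the twisted term $H^p(C_{n-1}^G(S),\, \mathcal{H}^1(F_P,\Q))$, whose finiteness is \emph{not} automatic from the finiteness of the ordinary Betti numbers of the base, since the monodromy of $\pi_1(C_{n-1}^G(S))$ on $H^1(F_P,\Q)$ may be nontrivial. Strengthening the inductive hypothesis to finite CW type is exactly what is needed to circumvent this difficulty, and the strengthening itself propagates through the inductive step: a locally trivial fiber bundle with finite CW base and finite CW fiber has finite CW total space, obtained by choosing trivializations over each cell of the base and gluing them via the attaching maps. As an alternative route that bypasses the strengthening altogether, one may observe that the free $G$-action makes $S \to S/G$ a finite regular covering, which lifts to a finite covering $C_n^G(S) \to C_n(S/G)$ of degree $|G|^n$; since $S/G$ is again a (possibly punctured) closed surface, $C_n(S/G)$ is classically known to have finite Betti numbers and finite CW type, and the finiteness for the covering space $C_n^G(S)$ follows via the identification $H^*(C_n^G(S),\Q) \cong H^*(C_n(S/G),\, p_*\Q)$ with a finite-dimensional local system.
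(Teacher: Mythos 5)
Your argument is correct, but it proceeds along a genuinely different route from the paper. The paper does not use the fibration $\pi^n$ at all here: it invokes Poincar\'e duality for the orientable manifold $C_n^G(S)$ to reduce the claim to finiteness of compactly supported cohomology, and then runs the long exact sequence in compactly supported cohomology for the decomposition $C_n^G(S)=S^n\setminus D$, where $D$ is the finite union of the closed sets $\{p_i=g\cdot p_j\}$ whose multiple intersections are homeomorphic to powers of $S$; an inclusion--exclusion-style induction over these closed strata then gives finiteness. This sidesteps monodromy entirely. Your route leverages the Fadell--Neuwirth-type fibration together with the Serre spectral sequence with local coefficients, and you correctly identify the one real subtlety -- the twisted term $H^p(C_{n-1}^G(S),\mathcal{H}^1(F_P,\Q))$ is not controlled by the Betti numbers of the base alone -- and repair it by strengthening the induction to finite CW type. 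The only imprecision is in the propagation step: $C_{n-1}^G(S)$ is an open manifold, not literally a finite CW complex, so ``choosing trivializations over each cell of the base'' requires first pulling the bundle back along a homotopy equivalence from a finite complex to the base (the induced map of total spaces is then a homotopy equivalence since fiber bundles over paracompact bases are fibrations); alternatively, finite CW type of $C_n^G(S)$ follows directly since it is the complement of a subcomplex in a finite triangulation of the compact manifold $\bar{S}^n$. Your second route via the degree-$|G|^n$ covering $C_n^G(S)\to C_n(S/G)$ is also valid (note only that $S/G$ may be non-orientable), though it outsources the finiteness to the classical configuration-space case rather than proving it. What the paper's approach buys is self-containedness and the complete avoidance of local systems; what yours buys is a stronger conclusion (finite CW type) and consistency with the fibration techniques used throughout the rest of the paper.
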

\begin{proof}
Since $C_n^G(S)$ is an orientable manifold, we only need to show that the cohomolgy with compact support of $X$ is finite dimensional. Let $X$ be a locally compact space, $U$ an open subset of $X$ and $V=X\setminus U$. If two of the three spaces $X,U,V$ have finite dimensional cohomology with compact support, then the third does. Indeed, there cohomology with compact support are related by a long exact sequence (\cite{Iver}, equation 7.6, p. 185). Using this argument one shows by induction on $l$ that if $V=\cup_{k=1}^l V_k$, where $V_k$ are closed and the intersections $V_{k_1}\cap \cdots \cap V_{k_m}$ (for $m\geq 1$) has finite dimensional cohomology with compact support, then $V$ also has finite dimensional cohomology with compact support. Now notice that $C_n^G(S)=(S)^n \setminus D$, where $D=\cup_{k=1}^l D_{\alpha_{k}}$ with $D_{\alpha_k}$ a set of points $(p_1,\dots, p_n) \in (S)^n$ given by the equation $p_{i_{\alpha_k}}=g_{\alpha_k}\cdot p_{j_{\alpha_k}}$. For $l\geq 1$, the intersections of $D_{\underline{k}}:=D_{\alpha_{k_1}}\cap \cdots \cap D_{\alpha_{k_l}}$ are homoemorphic to $(S)^{m_{\bar{k}}}$ for a given $m_{\bar{k}} \geq 0$ ($(S)^0=\emptyset)$) and hence have finite dimensional cohomology with compact support. This proves that the closed set $D$ has finite dimensional cohomology with compact support. The proposition follows, since $S^n$ has finite dimensional with compact support and $C_n^G(S)=(S^n)\setminus D$. 
\end{proof}
\begin{proposition}
If $S$ is not homeomorphic to $S^2$, then the space $C_n^G(S)$ is aspheric (a $K(\pi,1)$ space).
\end{proposition}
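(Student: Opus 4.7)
The plan is to proceed by induction on $n$, using the locally trivial fibration $\pi_n=\pi_{n,n-1}:C_n^G(S)\to C_{n-1}^G(S)$ recalled just above, whose fiber over a point $P=(p_1,\dots,p_{n-1})$ is $F_P=S\setminus O_{n-1}(P)$.

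For the base case $n=1$, I would simply observe that $C_1^G(S)=S$. Since $S$ is a connected orientable surface not homeomorphic to $S^2$, it is aspherical: either $S=\bar{S}$ is a closed surface of genus $g\geq 1$, in which case the uniformization theorem gives $\R^2$ or $\Hy^2$ as its universal cover; or $S$ has at least one puncture, and then a connected boundaryless non-compact surface has the homotopy type of a $1$-dimensional CW-complex, hence is a $K(\pi,1)$ with free fundamental group.

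For the inductive step, assume $C_{n-1}^G(S)$ is aspherical. When $n\geq 2$ the set $O_{n-1}(P)$ is a nonempty finite subset of $S$, so $F_P$ is again a connected non-compact orientable surface without boundary, and is aspherical by exactly the argument used at the base case. Applying the long exact sequence of homotopy groups of the fibration,
\[\cdots\to\pi_k(F_P)\to\pi_k(C_n^G(S))\to\pi_k(C_{n-1}^G(S))\to\pi_{k-1}(F_P)\to\cdots,\]
gives $\pi_k(C_n^G(S))=0$ for every $k\geq 2$, closing the induction.

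No step here is genuinely difficult: the whole argument reduces to the asphericity of a (possibly punctured) orientable surface that is not $S^2$ together with the long exact sequence of a fibration. The hypothesis $S\not\simeq S^2$ is used only in the base case, and the non-vanishing of $\pi_2(S^2)=\Z$ shows that this assumption is essential, which is precisely why the spherical case must be handled separately as in main result $(4)$.
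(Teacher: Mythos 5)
Your proof is correct and follows essentially the same route as the paper: induction on $n$ via the fibration $\pi^n$, asphericity of the base surface and of the punctured-surface fibers, and the homotopy long exact sequence. You simply spell out the asphericity of $S$ and $S\setminus O_{n-1}(P)$ in more detail than the paper does.
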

\begin{proof}
The surface $S$ is aspheric and the surfaces of the form $S\setminus O_k(P)$ for $O_k(P)$ corresponding to fibers of the fibrations $\pi^k:C_{k+1}^G(S)\to C_k^G(S)$ are also aspheric. Using this and the homotopy long exact sequence of the fibration $\pi^{n}$, one can show by induction that $C_{n}^G(S)$ is aspheric for all $n\geq 1$.
\end{proof}
\begin{corollary}\label{cor inj}
Choose a $P'\in F_P$. If $S$ is not homeomorphic to $S^2$, then the map $i_\#: \pi_1(F_b,P')\to \pi_1(C_{n+1}^G(S),P')$ induced by the inclusion of the fiber is injective.
\end{corollary}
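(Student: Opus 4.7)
The plan is to extract the statement directly from the homotopy long exact sequence of the fibration $\pi^{n+1} : C_{n+1}^G(S) \to C_n^G(S)$, whose fiber over $P$ is $F_P$. The relevant portion of the sequence is
\[
\pi_2(C_n^G(S), P) \longrightarrow \pi_1(F_P, P') \overset{i_\#}{\longrightarrow} \pi_1(C_{n+1}^G(S), P'),
\]
so it suffices to show that $\pi_2(C_n^G(S), P)$ vanishes. But this is exactly what the preceding proposition gives: when $S$ is not homeomorphic to $S^2$, the space $C_n^G(S)$ is a $K(\pi,1)$, so all higher homotopy groups vanish, and in particular $\pi_2 = 0$.

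More concretely, I would first recall that $F_P$ is homeomorphic to $S \setminus O_n(P)$ with $O_n(P)$ a nonempty finite set, so $F_P$ is an open surface and is path connected (ensuring the long exact sequence makes sense and that the choice of basepoint $P'$ in the fiber is immaterial). Then I would apply the previous proposition to $C_n^G(S)$: since $S \not\simeq S^2$, it is aspheric, and consequently $\pi_2(C_n^G(S), P) = 0$. Exactness of
\[
\pi_2(C_n^G(S), P) \to \pi_1(F_P, P') \to \pi_1(C_{n+1}^G(S), P')
\]
then forces $\ker(i_\#) = 0$, which is the desired injectivity.

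There is no real obstacle here: the entire content is packaged in the asphericity of the base $C_n^G(S)$, which was established by induction on $n$ in the preceding proposition using precisely the same family of fibrations. The only minor point worth stating explicitly is that everything goes through because we have assumed $S \not\simeq S^2$, which both guarantees asphericity of $C_n^G(S)$ and ensures that the fiber $F_P = S \setminus O_n(P)$ (an open surface obtained from a non-$S^2$ closed surface by removing at least one nonempty finite set) is itself a $K(\pi,1)$; in particular $\pi_1(F_P, P')$ is the genuine object being injected into $\pi_1(C_{n+1}^G(S), P')$.
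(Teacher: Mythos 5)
Your proof is correct and follows exactly the paper's argument: the paper's own proof is the one-line observation that the claim "follows from the exact sequence of the fibration, since the base is aspheric," which is precisely your use of $\pi_2(C_n^G(S))=0$ (from the preceding asphericity proposition) together with exactness of $\pi_2(\text{base})\to\pi_1(\text{fiber})\to\pi_1(\text{total space})$. The extra remarks about path-connectedness of $F_P$ and its own asphericity are fine but not needed for the injectivity statement itself.
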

\begin{proof}
This follows from the exact sequence of the fibration, since the base is aspheric.
\end{proof}
\subsection{Rationality of fibrations of orbit configuration spaces of surfaces}\label{S22}
We recall that $\bar{S}$ is a closed oriented surface $S\subset \bar{S}$ is a cofinite set and $G$ acts freely on $S$.
\begin{proposition}\label{prop nil S}
Assume that $S$ is compact (i.e. $S=\bar{S}$ ) and not homeomorphic to $S^2$. The fundamental group of $C_n(S)$ acts nilpotently in the (rational) cohomology of the fiber of $\pi^n:C_{n+1}(S)\to C_n(S)$.
\end{proposition}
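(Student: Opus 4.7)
The plan is to reduce nilpotence on rational cohomology of the fiber to nilpotence on its integral homology, and then to exhibit a two-step unipotent filtration coming from the inclusion $F_P\hookrightarrow S$. By \eqref{Fiber HH}, the rational cohomology of $F_P$ vanishes in degrees $\geq 2$, and in degree $0$ it is $\Q$ with trivial action; so it suffices to prove nilpotence on $H^1(F_P,\Q)$. Corollary \ref{cor inj} gives injectivity of $i_\#$, so the proposition preceding Corollary \ref{cor nilp} identifies the action on $\pi_1(F_P)^{ab}$ with the action on $H_1(F_P,\Z)$, and the dualization argument used in the proof of Corollary \ref{cor nilp} translates nilpotence on $H_1(F_P,\Z)$ into nilpotence on $H^1(F_P,\Q)$. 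It is therefore enough to show that $\pi_1(C_n(S))$ acts nilpotently on $H_1(F_P,\Z)$.

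The geometric input I would use is that every pure braid $\gamma\in\pi_1(C_n(S),P)$ is realized by an ambient isotopy $H_t:S\to S$ with $H_0=\mathrm{id}_S$ and $H_1(p_i)=p_i$ for $i=1,\dots,n$, by the isotopy extension theorem on the compact surface $S$ (equivalently, the local triviality of the evaluation $\mathrm{Homeo}(S)\to C_n(S)$). The monodromy $f_\gamma:F_P\to F_P$ is then the restriction $H_1\vert_{F_P}$. Consider the short exact sequence induced by the inclusion $\iota:F_P\hookrightarrow S$,
$$0\longrightarrow K \longrightarrow H_1(F_P,\Z)\overset{\iota_*}{\longrightarrow} H_1(S,\Z)\longrightarrow 0,$$
where $K\cong\Z^{n-1}$ is generated by the classes $\gamma_1,\dots,\gamma_n$ of small positively oriented loops around the punctures, subject to $\gamma_1+\cdots+\gamma_n=0$. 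Since $H_1$ is isotopic to $\mathrm{id}_S$ in $\mathrm{Homeo}(S)$, it acts trivially on $H_1(S,\Z)$, and by naturality $f_\gamma$ preserves $K$ and induces the identity on the quotient $H_1(S,\Z)$.

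To see that $f_\gamma$ also acts trivially on $K$, I would use the winding-number homomorphisms $\ell_j:H_1(F_P,\Z)\to\Z$ around the punctures, which satisfy $\ell_j(\gamma_i)=\delta_{ij}$. Because $H_1$ is an orientation-preserving self-homeomorphism of $S$ with $H_1(p_j)=p_j$, one has $\ell_j(f_\gamma(\gamma_i))=\ell_j(\gamma_i)=\delta_{ij}$; since $f_\gamma(\gamma_i)-\gamma_i$ lies in $K$ and has trivial winding numbers around every puncture, it must vanish. Consequently $(f_\gamma-\mathrm{id})$ sends $H_1(F_P,\Z)$ into $K$ and kills $K$, so $(f_\gamma-\mathrm{id})^2=0$ for every $\gamma$, which is exactly the nilpotence criterion of Subsection \ref{S14} with depth $2$. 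The main delicate point I expect is the appeal to isotopy extension, and through it the fact that the identity component of $\mathrm{Homeo}(S)$ acts trivially on $H_1(S,\Z)$; both rely in an essential way on compactness of $S$, which is precisely the hypothesis of the proposition.
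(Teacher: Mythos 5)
Your proof takes a genuinely different, more geometric route than the paper's. The paper works with the explicit presentation of $\pi_1 C_{n+1}(S)$ from \cite{GuaSP} and computes the conjugation action of each generator on $\pi_1(F_P,P')^{ab}$, exhibiting a two-step filtration whose middle term is spanned by the classes of the braid loops $C_{i,n+1}$ --- which are precisely your loops around the punctures, so the filtration $0\subset K\subset H_1(F_P,\Z)$ is the same in both arguments. What differs is how triviality on the graded pieces is verified: you replace the relation-by-relation computation with isotopy extension (realizing the monodromy as the restriction of an ambient homeomorphism isotopic to $\mathrm{id}_S$ fixing the $p_i$) plus naturality of $\iota_*:H_1(F_P,\Z)\to H_1(S,\Z)$. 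This buys independence from the particular presentation and makes the depth-$2$ unipotence conceptually transparent, at the price of importing the fibration $\mathrm{Homeo}(S)\to C_n(S)$; the paper's computation stays entirely inside the reductions of Subsection \ref{S14}. Note also that, since you attack the monodromy action on $H_1(F_P,\Z)$ directly, the appeals to Corollary \ref{cor inj} and to the comparison proposition are superfluous; only the dualization step from the proof of Corollary \ref{cor nilp} together with \eqref{Fiber HH} is needed.

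One local slip: on a closed surface the functionals $\ell_j$ with $\ell_j(\gamma_i)=\delta_{ij}$ for all $i,j\in\{1,\dots,n\}$ do not exist, since applying $\ell_j$ to the relation $\gamma_1+\cdots+\gamma_n=0$ would give $1=0$ (winding numbers around punctures are only well defined up to a common shift, unlike in the planar case). The conclusion you want is nevertheless true and the fix is easy: either use the differences $\ell_j-\ell_{j'}$, which are well defined, natural under $H_1$, and separate the points of $K\cong\Z^n/\Z\cdot(1,\dots,1)$; or, more directly, observe that $H_1$ carries a sufficiently small disk $D_j$ around $p_j$ to a disk containing $p_j$ and no other puncture, so that $f_{\gamma*}(\gamma_j)$ is the class of the boundary of such a disk and, $H_1$ being orientation preserving, equals $\gamma_j$ in $H_1(F_P,\Z)$; equivalently, $K$ is the image of the boundary map $H_2(S,F_P)\to H_1(F_P)$, on whose source $H_1$ acts trivially because it fixes each $p_j$ and preserves orientation. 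With this repair the argument is complete.
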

\begin{proof}
For convenience we prove the proposition for $\pi'_n$ obtained by projecting on the last (instead of the first) $n$ coordinates. Fixe a base point $P\in C_n(S)$ and $P'$ in the fiber $F_P$ of $\pi'_n$ over $P$. In virtue of corollary \ref{cor inj}, equation (\ref{Fiber HH}), corollary \ref{cor nilp} and remark \ref{rmk conj}, we only need to show that the action of $\pi_1(C_{n+1}(S),P')$ on $\pi_1(F_P,P')^{ab}$ induced by conjugacy on $\pi_1(F_P,P')$ (regarded as a normal subgroup) is nilpotent for a given $P'$. It is proved in \cite{GuaSP} (see corollary 8) that $\pi_1(C_{n+1}(S),P')$ for a given $P'$ is generated by some loops $\rho_{i,j},\tau_{i,j}$ for $i\in [1,n+1]$ and $j\in [1,g]$, where $g$ is the genus of $S$. For a fixed $i$ these loops correspond to the standard generators of the homology of the $i$-th copy of $S$ in $S^{n+1}$ containing $C_n(S)$. In loc. cit., the authors also introduce loops $C_{i,j}$ for $i <j$ in $[1,n+1]$ in $\pi_1(C_{n+1}(S),P')$ (one can regard these loops as the classical braids). One can check that, the group $\pi_1(F_P,P')$ is freely generated by the loops $C_{2,n+1},\dots,C_{n,n+1}, \rho_{n+1,j}, \tau_{n+1,j}$ for $j\in[1,g]$ and contains the loop $C_{1,n+1}$. Relations in corollary 8 of loc. cit. (index by $(I), (II), \dots, (XIX)$) suffice to describe the action of $\pi_1(C_{n+1}(S),P')$ on $\pi_1(F_P,P')^{ab}$ we are interested in. We give equations determining this action. These equations are easily obtained from the relations in \cite{GuaSP} and we omit the proof. We will only indicate next to each equation the index of the relations in \cite{GuaSP} implying it. For $x \in \pi_1(F_P,P')$ we denote by $[x]$ the class of $x$ in the abelianization.
For $k,l\in[1,n]$ and $r,s \in [1,g]$, we have:
$$ \rho_{k,r}\cdot [\rho_{n+1,s}]=[\rho_{n+1,s}], \qquad (IV),(VIII), (XII)$$

\begin{displaymath}
\rho_{k,r}\cdot [\tau_{n+1,s}]= \left\{
\begin{array}{lr}
\: [\tau_{n+1,s}] & \text{if $r\neq s$}\\
\: [\tau_{n+1,s}]+ [C_{k,n+1} ] & \text{otherwise}
\end{array}
\right. \qquad (II), (VII), (XV)
\end{displaymath}

$$ \rho_{k,r}\cdot [C_{l,n+1}]=[C_{l,n+1}] \qquad (X), (XVII) , (XIX)$$
and
$$ \tau_{k,r}\cdot [\tau_{n+1,s}]=[\tau_{n+1,s}], \qquad (VI), (IX), (XIII)$$
\begin{displaymath}
\tau_{k,r}^{-1}\cdot [\rho_{n+1,s}]= \left\{
\begin{array}{lr}
\: [\rho_{n+1,s}] & \text{if $r\neq s$}\\
\: [\rho_{n+1,s}] +[C_{k,n+1}]& otherwise
\end{array}
\right. \qquad (III), (V), (XIV)
\end{displaymath}
$$ \tau_{k,r}\cdot [C_{l,n+1}]=[C_{l,n+1}] \qquad (XI), (XVI), (XVIII) $$
Since $\pi_1(F_P,P')$ is generated by $C_{1,n+1},\dots, C_{n,n+1}, \rho_{n+1,s}, \tau_{n+1,s}$ for $s\in[1,g]$, these equations describe entirely the action of $\pi_1(C_{n+1}(S), P')$. Indeed, the remaining generators of $\pi_1(C_{n+1}(S),P')$ not appearing in the equations are $\rho_{n+1,r}$ and $\tau_{n+1,r}$ and they act trivially. We deduce from this and the equations that the following subgroups of $\pi_1(F_P,P')^{ab}$ are stable under $\pi_1(C_{n+1}(S),P')$:
$$ 0=U(0)\subset U(1) \subset U(2)=\pi_1(F_P,P')^{ab},$$
where $U(1)$ is the subgroup of generated by $[C_{1,n+1}],\dots,[C_{n,n+1}]$, and that $\pi_1(C_{n+1}(S),P')$ acts trivially on $U(m+1)/U(m)$ for $m\in[0,1]$. Hence, the action is nilpotent. We have proved the proposition.
\end{proof}
\begin{lemma}
The fundamental group of $C_n(\R^2)$ acts trivially in the (rational) cohomology of the fiber of $\pi^n:C_{n+1}(\R^2) \to C_n(\R^2)$.
\end{lemma}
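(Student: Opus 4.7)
The plan is to use the proposition preceding Corollary \ref{cor nilp} (which identifies, via Hurewicz, the monodromy action on $H_1(F_P,\Z)$ with the $\pi_1(B)$-action on $\pi_1(F_P)^{ab}$), combined with Remark \ref{rmk conj}, to reduce the statement to a purely group-theoretic assertion about the pure braid group. The fiber $F_P=\R^2\setminus\{p_1,\dots,p_n\}$ is homotopy equivalent to $\bigvee_{i=1}^n S^1$, so $H^k(F_P,\Q)=0$ for $k\geq 2$, and $H^0(F_P,\Q)=\Q$ carries the trivial action; only $H^1(F_P,\Q)\cong\Q^n$ requires attention. The injectivity of $i_\#:\pi_1(F_P,P')\to\pi_1(C_{n+1}(\R^2),P')$ needed for the reduction will follow from the asphericity of $C_n(\R^2)$ (which is a $K(P_n,1)$ for the pure braid group $P_n$) and of $F_P$, via the homotopy long exact sequence of the fibration $\pi^n$.

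Next, I would identify the subgroup $\pi_1(F_P,P')\subset\pi_1(C_{n+1}(\R^2),P')=P_{n+1}$ with the free group $F_n$ on generators $x_1,\dots,x_n$, where $x_i$ is a small meridian loop around the $i$-th puncture $p_i$. Under this identification, the conjugation action of $P_{n+1}$ on its normal subgroup $\pi_1(F_P,P')$ is precisely the classical Artin representation of the pure braid group on $F_n$. Artin's theorem then ensures that every pure braid sends each standard generator $x_i$ to a conjugate of itself, so the induced action on the abelianization $\pi_1(F_P,P')^{ab}\cong\Z^n$ is trivial.

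Combining these two ingredients, the proposition preceding Corollary \ref{cor nilp} together with Remark \ref{rmk conj} yields that the monodromy action of $\pi_1(C_n(\R^2))$ on $H_1(F_P,\Z)$ is trivial; dualizing (as in the proof of Corollary \ref{cor nilp}) gives triviality on $H^1(F_P,\Q)$, and hence on $H^*(F_P,\Q)$. The only non-routine step is the identification of the conjugation action on $\pi_1(F_P)\subset P_{n+1}$ with Artin's representation together with the recognition of the geometric meridians $x_i$ as the standard free generators; this is classical and can be made precise by a standard combing-of-braids argument. An alternative, slightly less slick option would bypass Artin and argue directly that, for a pure braid $\gamma$, the monodromy homeomorphism $f_\gamma$ of $F_P$ carries each meridian around $p_i$ to a loop freely homotopic to a meridian around the same $p_i$ (since strands do not get permuted), giving triviality on $H_1(F_P,\Z)$ by hand.
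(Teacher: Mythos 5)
Your proof is correct and takes essentially the same route the paper intends: the paper's own proof is the single remark that the statement follows from the Artin pure braid relations by adapting the proof of the preceding proposition, and your argument (reduce via the monodromy-versus-conjugation comparison and the nilpotence remark to the action on $\pi_1(F_P)^{ab}$, then invoke that a pure braid sends each free generator of $\pi_1(F_P)\cong F_n$ to a conjugate of itself) is exactly that reduction made explicit. No gaps; the asphericity argument for injectivity of $i_\#$ and the vanishing of higher cohomology of the punctured plane are handled correctly.
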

\begin{proof}
One can prove this using Artin pure braid relations and by adapting the proof of the previous proposition.
\end{proof}
\begin{proposition}\label{prop nil S2}
The fundamental group of $C_n(S^2)$ acts trivially in the (rational) cohomology of the fiber of $\pi^n: C_{n+1}(S^2) \to C_n(S^2)$.
\end{proposition}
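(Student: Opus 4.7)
The plan is to prove the stronger statement that the monodromy action of $\pi_1(C_n(S^2),P)$ on $H_1(F_P,\Z)$ is trivial, where $F_P=S^2\setminus\{p_1,\dots,p_n\}$; by naturality this will give triviality on $H^1(F_P,\Q)=\Hom_\Z(H_1(F_P,\Z),\Q)$, and since $F_P$ is an open surface one has $H^k(F_P,\Q)=0$ for $k\geq 2$ while $H^0$ carries the trivial action. So everything reduces to controlling the degree-one part.

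Fix a basepoint $P'\in F_P$ and let $x_1,\dots,x_n\in \pi_1(F_P,P')$ be small simple loops around the punctures $p_1,\dots,p_n$ respectively (with chosen tethering paths to $P'$). Then $\pi_1(F_P,P')$ is generated by $x_1,\dots,x_n$ with single relation $x_1\cdots x_n=1$, so the classes $[x_1],\dots,[x_n]$ generate $H_1(F_P,\Z)$. A class in $\pi_1(C_n(S^2),P)$ is represented by a pure braid $\gamma(t)=(\gamma_1(t),\dots,\gamma_n(t))$ with $\gamma_i(0)=\gamma_i(1)=p_i$. By the construction of the monodromy recalled in Subsection~\ref{S14}, $\gamma\cdot x_i$ is obtained by lifting $\gamma$ to a homotopy $H_\gamma:F_P\times [0,1]\to C_{n+1}(S^2)$, dragging the loop $x_i$, and then conjugating by the path $\eta_\gamma(t)=H_\gamma(P',t)$. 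Because $\gamma_i$ comes back to $p_i$ at time $1$, the dragged loop is again a small simple loop around $p_i$, traversed in the same orientation, but based at $f_\gamma(P')$ instead of $P'$; the only ambiguity is the choice of a path in $F_P$ from $P'$ to a point near $p_i$, so $\gamma\cdot x_i$ is conjugate to $x_i$ in $\pi_1(F_P,P')$. Passing to the abelianisation gives $\gamma\cdot[x_i]=[x_i]$ for every $i$, and hence trivial action on all of $H_1(F_P,\Z)$.

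The main point requiring care is the geometric claim that the pure-braid monodromy preserves the conjugacy class of each $x_i$; this is the $S^2$-version of the classical fact behind the previous lemma --- namely, that an Artin-type representation of a pure surface braid group on the fundamental group of the punctured surface sends each puncture-generator to a conjugate of itself --- and depends only on the local behaviour near each $p_i$. Alternatively, mimicking the proof of Proposition~\ref{prop nil S}, one could start from an explicit presentation of $\pi_1(C_n(S^2))$ as the pure spherical braid group (Fadell--Van Buskirk) together with explicit monodromy formulas on the $x_i$, and verify triviality of the induced map on each $[x_i]$ generator by generator; this is more computational but self-contained, and the geometric route above is preferable since it directly yields triviality rather than mere nilpotence.
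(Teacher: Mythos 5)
Your argument is correct, but it follows a genuinely different route from the paper. The paper first proves (as a separate lemma, via the Artin pure braid relations) that $\pi_1 C_n(\R^2)$ acts trivially on the cohomology of the fiber of $\pi^n_{\R^2}$, and then transfers this to the sphere: viewing $S^2=\R^2\cup\{\infty\}$, the inclusion $f_{n-1}:C_{n-1}(\R^2)\to C_n(S^2)$, $(p_1,\dots,p_{n-1})\mapsto(\infty,p_1,\dots,p_{n-1})$, identifies $C_{n-1}(\R^2)$ with the fiber of $\pi_{n,1}$ over $\infty$, hence induces a \emph{surjection} on fundamental groups because $S^2$ is simply connected; since $\pi^{n-1}_{\R^2}$ is the pullback of $\pi^n$ along $f_{n-1}$, the spherical monodromy action factors through the (trivial) planar one via a surjective homomorphism and is therefore trivial. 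You instead argue directly on the sphere: the monodromy of a pure spherical braid sends each puncture loop $x_i$ to a conjugate of itself (it is realized by an orientation-preserving homeomorphism of $S^2$ fixing each $p_i$), so it fixes each class $[x_i]$ generating $H_1(F_P,\Z)$, and the higher cohomology of the open surface vanishes. Both are sound; your version is more self-contained (it subsumes the planar lemma rather than invoking it) at the cost of leaning on the standard geometric fact about surface braid monodromy preserving conjugacy classes of puncture loops, while the paper's reduction needs only the easy planar computation plus the $\pi_1$-surjectivity trick. One small point worth tightening: you describe the basepoint correction as conjugation by $\eta_\gamma(t)=H_\gamma(P',t)$, which is a path in the total space rather than in the fiber; since you only ever use the induced map on $H_1(F_P,\Z)$, where basepoint changes are irrelevant, this does not affect the argument, but note also that you cannot appeal to the conjugation action on $\pi_1(F_P)^{\mathrm{ab}}$ from Subsection~\ref{S14} here, as that construction requires $i_\#$ to be injective, which fails for the spherical fibrations.
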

\begin{proof}
We regard $S^2$ as the one point compactification $\R^2 \cup \infty$ of $\R^2$. Denote by $f_n$ the inclusion $C_n(\R^2) \to C_{n+1}(S^2)$ given by $(p_1,\dots,p_n)\mapsto (\infty, p_1,\dots,p_n)$ (with the convention $C_0(\R^2)$ is a point and $f_0$ maps the point to $\infty$). The map $f$ is a homeomorphism between $C_n(\R^2)$ and the fiber $F^1_\infty$ of $\pi_{n,1}:C_{n+1}(S^2) \to C_1(S^2)=S^2$ over $\infty$. Since $S^2$ is simply connected, it follows from the homotopy long exact sequence of $\pi_{n,1}$ that $f_n$ induces a surjective morphism on fundamental groups. On the other hand, the fibration $\pi^{n-1}_{\R^2}:C_n(\R^2) \to C_{n-1}(\R^2)$ ($C_0(\R^2)$ is a point) is equivalent to the pullback by $f_{n-1}$ of $\pi^n:C_{n+1}(S^2) \to C_n(S^2)$. In particular, the action $(a1)$ of the fundamental group of $C_{n-1}(\R^2)$ in the cohomology of the fiber $F_{\R^2}$ of $\pi^{n-1}_{\R^2}$ factors via $f_{n-1}$ through the action $(a2)$ of the fundamental group of $C_n(S^2)$ on the fiber of $\pi^n$. Since the action $(a1)$ is trivial (Cf. previous lemma) and the map induced by $f_{n-1}$ on fundamental groups is surjective, we deduce that $(a2)$ is trivial. We have proved the proposition.
\end{proof}

\begin{proposition}\label{prop nilp act}
The fundamental group of $C_n^G(S)$ acts nilpotently in the (rational) cohomology of the fiber of $\pi^n:C_{n+1}^G(S)\to C_n^G(S)$.
\end{proposition}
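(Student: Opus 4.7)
The plan is to reduce proposition \ref{prop nilp act} to the two previously-handled cases by embedding $C_n^G(S)$ into a classical configuration space on the underlying closed surface $\bar S$. I expect the main obstacle to be a clean monodromy-compatibility argument showing that the equivariant fibration $\pi^n$ and a suitable classical fibration on $\bar S$ induce the same action on the cohomology of a common fiber.

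First, I would introduce the relevant embedding. Write $\bar S\setminus S=\{q_1,\dots,q_m\}$ (allowing $m=0$ when $S=\bar S$), fix an enumeration $G=\{g_1,\dots,g_{|G|}\}$, and set $N=n|G|+m$. Define
$$\iota\colon C_n^G(S)\longrightarrow C_N(\bar S),\qquad (p_1,\dots,p_n)\longmapsto (q_1,\dots,q_m,g_1p_1,\dots,g_{|G|}p_1,\dots,g_1p_n,\dots,g_{|G|}p_n).$$
Freeness of the $G$-action together with the orbit-distinctness condition defining $C_n^G(S)$ guarantees that the $N$ output coordinates are pairwise distinct, so $\iota$ is well-defined and continuous. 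Moreover the fiber of $\pi^n$ over $P$ and the fiber of $\pi^N\colon C_{N+1}(\bar S)\to C_N(\bar S)$ over $\iota(P)$ coincide as subsets of $\bar S$: both equal $F_P=\bar S\setminus(\{q_1,\dots,q_m\}\cup O_n(P))=S\setminus O_n(P)$.

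The core of the argument, and where some care is needed, is to check that the monodromy of $\pi^n$ along any loop $\gamma$ in $C_n^G(S)$ based at $P$ equals the monodromy of $\pi^N$ along $\iota\circ\gamma$ as self-homotopy-equivalences of the common fiber $F_P$. This should follow because a lift $\tilde H\colon F_P\times[0,1]\to C_{n+1}^G(S)$ of the constant-in-$x$ homotopy $(x,t)\mapsto\gamma(t)$ simultaneously provides a lift of $(x,t)\mapsto\iota(\gamma(t))$ to $C_{N+1}(\bar S)$, since the set of forbidden positions at time $t$ is $\iota(\gamma(t))$ in both pictures. Granting this, the $\pi_1(C_n^G(S),P)$-action on $H^*(F_P,\Q)$ factors through the map $\iota_\#\colon\pi_1(C_n^G(S),P)\to\pi_1(C_N(\bar S),\iota(P))$ followed by the analogous action of the target on the cohomology of the fiber of $\pi^N$.

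To conclude, I will apply proposition \ref{prop nil S} when $\bar S\not\simeq S^2$ and proposition \ref{prop nil S2} when $\bar S\simeq S^2$: both yield a nilpotent action of $\pi_1(C_N(\bar S))$ on the cohomology of the fiber of $\pi^N$ (trivial, in fact, in the $S^2$ case). Since nilpotency of a group action is preserved under precomposition by any group homomorphism (the remark immediately following theorem \ref{Th RF}), this gives the desired nilpotency of the $\pi_1(C_n^G(S))$-action and finishes the proof. The equivariant combinatorics of the Guaschi--Paris relations used in proposition \ref{prop nil S} are thus avoided entirely, as the work has already been done there for $\bar S$.
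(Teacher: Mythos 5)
Your proposal is correct and follows essentially the same route as the paper: the paper defines the same orbit-enumeration map $f_n\colon C_n^G(S)\to C_{n|G|+l}(\bar S)$ (its equation (\ref{pullback})), observes that $\pi^n$ is the pullback of the corresponding one-point-forgetting fibration on $\bar S$ so that the monodromy action factors through $\pi_1$ of the classical configuration space, and then invokes propositions \ref{prop nil S} and \ref{prop nil S2}. Your explicit lift-of-homotopy justification of the monodromy compatibility is a slightly more detailed version of the pullback-equivalence the paper asserts, but the argument is the same.
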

\begin{proof}
Let $g_1,\dots,g_m$ be the elements of $G$ and $q_1,\dots,q_l$ be the elements of $\bar{S} \setminus S$ ($l$ is eventually null). We have a well-defined continuous map:
\begin{align}\label{pullback}
\begin{split}
f_n &: C_n^G(S) \to C_{mn+l}(\bar{S}) \\
&(p_1,\dots,p_n) \to (q_1,\dots,q_l, \alpha_1, \dots, \alpha_{mn}),
\end{split}
\end{align}
where $\alpha_{k}=g_{r+1}\cdot p_{s+1}$ if $k=sm+r$ with $r\geq 0$. Essentially, the coordinates of $f_n(p_1,\cdots,p_n)$ are exactly the points $q_1,\dots,q_l$ and $g_r \cdot p_s$ for $(r,s)\in [1,m]\times [1,n]$. A similar map was considered in \cite{DCoh}, for $S=\C^\times$ and $\bar{S}$ (not closed as we impose here) equal to $\C$ and $G$ is a group of roots of unity acting by multiplication. The fibration $\pi^n:C_{n+1}^G(S) \to C_n^G(S)$ is equivalent to the pullback by $f_n$ of $\pi^{mn+l-1}_{\bar{S}}:C_{mn+l}(\bar{S}) \to C_{nm+l-1}(\bar{S})$. Hence, the action $(a1)$ of the fundamental group of $C_n^G(S)$ on the cohomology of the fiber $F_n$ of $\pi^n$ factors thought the action $(a2)$ of the fundamental group of $C_{nm+l-1}(\bar{S})$ on the cohomology of the fiber (homeomorphic to $F_n$) of $\pi_{\bar{S}}^{nm+l-1}$. Hence, since $(a2)$ is nilpotent by propositions \ref{prop nil S} and \ref{prop nil S2}, $(a1)$ is also nilpotent. We have proved the proposition.
\end{proof}
 \begin{corollary}
For $n>k$, the fibration $\pi_{n,k}:C_n^G(S) \to C_k^G(S)$ obtained by projecting on the first $k$ coordinates is a rational fibration. 
\end{corollary}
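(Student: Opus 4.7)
The plan is to combine Proposition \ref{prop nilp act} with the iterated-composition results of subsection \ref{S14}, proceeding by induction on $n-k$. First, decompose $\pi_{n,k}$ as the tower of one-step projections
$$C_n^G(S)\xrightarrow{p_n} C_{n-1}^G(S)\xrightarrow{p_{n-1}}\cdots \xrightarrow{p_{k+1}} C_k^G(S),$$
where $p_i:C_i^G(S)\to C_{i-1}^G(S)$ forgets the last coordinate; each $p_i$ is a locally trivial fibration by the theorem of \cite{Xico} recalled at the start of the section, and by Proposition \ref{prop nilp act} the fundamental group of $C_{i-1}^G(S)$ acts nilpotently on the rational cohomology of its fiber. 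Since this fiber is a punctured surface (so it has finite Betti numbers, concentrated in degrees $\leq 1$), this nilpotency is equivalent to the corresponding nilpotent action on rational homology, via the duality between $H_*(F,\Q)$ and $H^*(F,\Q)$.

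The base case $n-k=1$ is then immediate: $\pi_{n,k}=p_n$, and Theorem \ref{Th RF} applies since the fiber (hence also the base, by Proposition \ref{prop betti}) has finite Betti numbers and the nilpotency hypothesis is provided by Proposition \ref{prop nilp act}. For the inductive step, write $\pi_{n,k}=\pi_{n-1,k}\circ p_n$. The inductive hypothesis, stated simultaneously as (a) $\pi_{n-1,k}$ is a rational fibration and (b) $\pi_1(C_k^G(S))$ acts nilpotently on the rational homology of its fiber, combined with the analogous nilpotency for $p_n$ coming from Proposition \ref{prop nilp act}, supplies both inputs of Proposition \ref{prop composition} applied to the pair $(p_n,\pi_{n-1,k})$. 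The conclusion of that proposition is exactly that $\pi_1(C_k^G(S))$ acts nilpotently on the rational homology of the fiber of $\pi_{n,k}$, so property (b) is propagated.

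To finish, one either invokes Corollary \ref{cor composition} directly, or converts (b) to nilpotency on the cohomology of the fiber (again by duality, legitimate since the fiber $C_{n-k}^G(S_{G,k})$ has finite Betti numbers by Proposition \ref{prop betti}) and applies Theorem \ref{Th RF}, using finite-dimensionality of $H^*(C_k^G(S),\Q)$ from Proposition \ref{prop betti}. The result is property (a) for $\pi_{n,k}$, completing the induction. The only mildly delicate point — and it is essentially bookkeeping — is the translation between nilpotent actions on rational homology and on rational cohomology needed to feed the hypotheses of Proposition \ref{prop composition} and Theorem \ref{Th RF} into each other; this is harmless because every space arising is an orbit configuration space of the type covered by Proposition \ref{prop betti}, so its Betti numbers are finite and the two notions coincide.
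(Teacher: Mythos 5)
Your proof is correct and follows essentially the same route as the paper: decompose $\pi_{n,k}$ into the one-step projections, feed Proposition \ref{prop nilp act} into Proposition \ref{prop composition}/Corollary \ref{cor composition} by iteration, and conclude with Theorem \ref{Th RF} via the finiteness of Betti numbers from Proposition \ref{prop betti}. Your explicit attention to the homology/cohomology translation is a detail the paper passes over silently, but it is handled correctly and does not change the argument.
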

\begin{proof}
 It follows form proposition \ref{prop nilp act} and corollary  \ref{cor composition} of proposition \ref{prop composition} using a composition argument that $\pi_1 C_k^G(S)$ acts nilpotently in the homology groups of the fiber of $\pi_{n,k}$. Indeed, $\pi_{n,k}= \pi^{n-1} \circ \pi^{n-2} \circ \cdots \circ \pi^k$. Since $C_k^G(S)$ has finite Betti numbers (proposition \ref{prop betti}), $\pi_{n,k}$ is a rational fibration by \ref{Th RF}.
\end{proof}
Note that the fiber of the locally trivial fibration $\pi_{n,k}: C_n^G(S) \to C_k^G(S)$ is the orbit configuration space $C_{n-k}^G(S_{G,k})$, where $S_{G,k}$ is $S$ with $k$ orbits removed. 
\begin{corollary}\label{cor model fib}
For $k<n$, the space $C_{n}^G(S)$ has a Sullivan model $A_{n,k}=\Lambda V_{C_k^G(S)}\otimes \Lambda V_{C_{n-k}^G(S_{G,k})}$ fitting in a cdga sequence: $\Lambda V_{C_k^G(S)}\to A_{n,k} \to \Lambda V_{C_{n-k}^G(S_{G,k})}$, where $\Lambda V_X$ denotes the minimal model of $X$, the first morphism is $\mathrm{id}\otimes 1$ and the second is obtained by applying the augmentation map of $\Lambda V_{C_k^{G}(S)}$.
\end{corollary}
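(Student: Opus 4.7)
The plan is to read off the statement essentially directly from the machinery recalled in Subsection \ref{S13}, once we know that $\pi_{n,k}$ is a rational fibration. I would proceed as follows.

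First, by the previous corollary the projection $\pi_{n,k}\colon C_n^G(S)\to C_k^G(S)$ is a rational fibration, and by the fibration theorem of Xicoténcatl its fiber is $C_{n-k}^G(S_{G,k})$. So the general setup of Subsection \ref{S13} applies with $B=C_k^G(S)$, $E=C_n^G(S)$, $F=C_{n-k}^G(S_{G,k})$. Setting $V=V_{C_k^G(S)}$ and $W=V_{C_{n-k}^G(S_{G,k})}$, the construction leading to diagram $(C)$ produces a commutative square of cdga morphisms
\[\begin{tikzcd}
\Lambda V \arrow{d}{m_B} \arrow{r}{i_V}& \Lambda V \otimes \Lambda W \arrow{d}{m_E} \arrow{r}{\varepsilon_V\cdot \mathrm{id}} & \Lambda W \arrow{d}{m_F}\\
A_{PL}(C_k^G(S)) \arrow{r}& A_{PL}(C_n^G(S)) \arrow{r} & A_{PL}(C_{n-k}^G(S_{G,k}))
\end{tikzcd}\]
in which $\Lambda V$ and $\Lambda W$ are the minimal models of the base and the fiber, $\Lambda V\otimes \Lambda W$ is a minimal relative Sullivan algebra, and $m_B$, $m_E$ are quasi-isomorphisms (the fact that $m_E$ is a quasi-isomorphism is precisely the content of Proposition \ref{prop model RF}, which I would simply cite).

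Taking $A_{n,k}:=\Lambda V_{C_k^G(S)}\otimes \Lambda V_{C_{n-k}^G(S_{G,k})}$ with the differential coming from the relative Sullivan model, the quasi-isomorphism $m_E$ exhibits $A_{n,k}$ as a Sullivan model of $C_n^G(S)$. The cdga sequence in the statement is then exactly the top row of the diagram above: the left map is $i_V=\mathrm{id}\otimes 1$ and the right map is $\varepsilon_V\cdot\mathrm{id}$, which is the morphism obtained by applying the augmentation of $\Lambda V_{C_k^G(S)}$ on the first tensor factor. This gives the required sequence and finishes the argument.

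There is really no obstacle here, since every nontrivial input has already been set up: the rationality of $\pi_{n,k}$ was handled by the previous corollary via Proposition \ref{prop nilp act}, Proposition \ref{prop composition}, Corollary \ref{cor composition}, Theorem \ref{Th RF} and the finiteness of Betti numbers (Proposition \ref{prop betti}); and the passage from a rational fibration to a Sullivan model of the total space of the form $\Lambda V\otimes \Lambda W$ together with the displayed cdga sequence is the general construction of diagram $(C)$ and Proposition \ref{prop model RF}. The corollary is just the specialization of this machinery to the locally trivial fibration $\pi_{n,k}\colon C_n^G(S)\to C_k^G(S)$ with fiber $C_{n-k}^G(S_{G,k})$.
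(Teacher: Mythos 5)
Your proposal is correct and follows essentially the same route as the paper, which likewise just invokes the rationality of $\pi_{n,k}$ (from the preceding corollary) together with the general construction of diagram $(C)$ and Proposition \ref{prop model RF}. One small attribution slip: Proposition \ref{prop model RF} asserts that $\Lambda V\otimes\Lambda W$ is a genuine Sullivan algebra (not merely relative), while the fact that $m_E$ is a quasi-isomorphism already comes from the construction of diagram $(C)$ itself; this does not affect the argument.
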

\begin{proof}
The corollary follows from the construction of a model for $C_{n}^G(S)$ using $\pi_{n,k}$ as in subsection \ref{S13} (Cf. the paragraph containing diagram (C) and proposition \ref{prop model RF} ), since $\pi_{n,k}$ is a rational fibration.
\end{proof}
The last $2$ corollaries correspond to main result $(1)$ announced in the introduction.

\subsection{Cross-sections for fibrations of orbit configuration spaces of the $2$-sphere}\label{S23}
We recall that (Cf. Subsection \ref{S21}) up to equivalence $C_n^G(S^2)$ is either $C_n(S^2)$ or $C_n^{\Z/2\Z}(S^2)$ where the action of $\Z/2\Z$ is the antipodal action.\\\\
The $2$-sphere $S^2$ is homeomorphic to $\PP^1$ the complex projective line. Since $3$ distinct points of $\PP^1$ define a projective basis of $\PP^1$, the group $\PGL(\C^2)$ of linear projective transformations of $\PP^1$ acts freely transitively on $C_3(\PP^1)\simeq C_3(S^2)$ and we have a homeomorphism $T: C_3(\PP^1)\to \PGL(\C^2)$ sending $(p_1,p_2,p_3)$ to the unique homography $T(p_1,p_2,p_3)$ mapping $(p_1,p_2,p_3)$ to $(0,1,\infty)$. In particular, for $n\geq 3$ we have and homeomorphism:
\begin{align}\label{Split Fib}\begin{split}
C_n(\PP^1)&\to C_3(\PP^1)\times C_{n-3}(\PP^1\setminus \{0,1,\infty\})\\ (p_1,\dots, p_n) &
\mapsto ((p_1,p_2,p_3), (h_{\underline{p}}(p_4),\dots,h_{\underline{p}}(p_n)))\end{split},\end{align}
where $h_{\underline{p}}=T(p_1,p_2,p_3)$ and $C_0(X)$ correspond to a point. Recall that we have a homotopy equivalence between $\PGL(\C^2)$ and $\PSU(\C^2)$ and isomorphisms-homeomorphisms $\PSU(\C^2)\simeq \mathrm{SO}_3(\R) \simeq \mathbb{P}^3(\R)$, where $ \mathbb{P}^3(\R)$ is the three-dimensional real projective space. Hence, $C_3(\PP^1)$ is homotopy equivalent to $\mathbb{P}^3(\R)$.\\\\
Denote by $T_pS^2 \subset \R^3$ the affine $2$-plane tangent to $S^2$ at $p$ and by $L_p:S^2 \setminus \{p\} \to T_{-p}S^2$ the stereographic projection with respect to $p$ onto $T_{-p}S^2$. The map $-L_p$ identifies $S^2\setminus \{p\}$ with $T_pS^2$, and we have a homeomorphism $L:C_2(S^2)\to TS^2, (x,y)\mapsto (x,-L_x(y))$, where $TS^2$ is the tangent bundle of $S^2$. The map $L$ restricts to a homeomorphism $C_n^{\Z/2\Z}(S^2)\to TS^2\setminus s_0(S^2)$, where $s_0:S^2\to TS^2$ is the zero cross-section. The space $TS^2\setminus s_0(S^2)$ is homotopy equivalent to the unitary bundle $US^2$ of $S^2$. Since the natural action of $\mathrm{SO}(3)$ on $S^2$ induces a free transitive action on $US^2$ (taken with respect to the standard metric on $S^2$), $US^2$ is homeomorphic to $\mathrm{SO}_3\simeq \PP^3(\R)$. We have proved that we have a homotopy equivalence:
\begin{equation}\label{eq hom eq Z2}
C_2^{\Z/2\Z}(S^2)\sim \PP^3(\R)
\end{equation}
\begin{lemma}\label{lem secc}
The fibration $\pi^n:C_{n+1}(\PP^1\setminus \{0,1,\infty\}) \to C_n(\PP^1 \setminus \{0,1,\infty\})$ admits a cross-section.
\end{lemma}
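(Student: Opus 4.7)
The plan is to exhibit an explicit cross-section by exploiting the unboundedness of $\PP^1\setminus\{0,1,\infty\}$ in an affine chart. First, identify $\PP^1\setminus\{0,1,\infty\}$ with $\C\setminus\{0,1\}$ via the standard affine chart on $\PP^1$ (the one in which $\infty$ is the point at infinity), so that $C_n(\PP^1\setminus\{0,1,\infty\})$ becomes $C_n(\C\setminus\{0,1\})$, the space of ordered $n$-tuples of pairwise distinct points of $\C\setminus\{0,1\}$.

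Next, define the candidate cross-section
$$s : C_n(\C\setminus\{0,1\}) \longrightarrow C_{n+1}(\C\setminus\{0,1\}), \qquad s(p_1,\dots,p_n) := (p_1,\dots,p_n,\, R(p_1,\dots,p_n)),$$
where
$$R(p_1,\dots,p_n) := 2 + \max_{1\leq i \leq n}|p_i|,$$
viewed as a positive real number in $\C$. I would then verify three points in order. Continuity of $s$ reduces to continuity of $R$, which is immediate since a maximum of finitely many continuous real-valued functions is continuous. To see that $s(p_1,\dots,p_n)$ lies in $C_{n+1}(\C\setminus\{0,1\})$, one checks that $R$ is distinct from $0$, from $1$, and from each $p_i$: the inequalities $R \geq 2 > 1 > 0$ take care of the first two, and since $R > |p_i|$ with $R \in \R_{>0}$, an equality $R = p_i$ would force $|p_i| = R$, contradicting $|p_i| < R$. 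Finally, $\pi^n \circ s = \mathrm{id}$ is automatic from the definition, since $s$ preserves the first $n$ coordinates.

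There is essentially no obstacle in this approach; the content is entirely geometric and reflects the fact that $\C\setminus\{0,1\}$ is an unbounded open subset of $\C$, so one can always continuously select a new point by going sufficiently far along the positive real axis past the given configuration and past the two finite punctures. Any continuous real-valued function of $(p_1,\dots,p_n)$ strictly exceeding $\max(|p_1|,\dots,|p_n|,\, 1)$ would serve equally well; the formula $2+\max_i|p_i|$ is simply the most economical choice.
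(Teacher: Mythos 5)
Your proof is correct and follows essentially the same approach as the paper: identify $\PP^1\setminus\{0,1,\infty\}$ with $\C\setminus\{0,1\}$ and append a point far out on the positive real axis (the paper uses $1+|z_1|+\cdots+|z_n|$ where you use $2+\max_i|p_i|$; both work). Your verification of the three required properties is complete and accurate.
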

\begin{proof}
If one replaces $\PP^1\setminus \{0,1,\infty\}$ with the homeomorphic space $\C\setminus \{0,1\}$, the map $(z_1,\dots,z_n)\mapsto(z_1,\dots,z_n, 1+\vert z_1\vert +\cdots \vert z_n \vert)$ defines a cross-section.
\end{proof}
In general fibrations of configuration spaces of manifolds with punctures admit cross-sections.
\begin{proposition}\label{prop cross-section}
We consider the fibration $\pi^n: C_{n+1}^G(S^2)\to C_n^G(S^2)$. If $G=\{1\}$, then $\pi^n$ admits a cross-section if and only if $n\neq 2$. If $G\neq \{1\}$, then the fibration $\pi^n$ admits a cross-section if and only if $n\geq 2$.
\end{proposition}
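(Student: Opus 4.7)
The plan is to split the argument into two cases according to $G$, and within each case to treat the small values of $n$ separately from the ``stable'' range where the splitting homeomorphism (\ref{Split Fib}) applies. In both obstructing cases ($n=2$ for $G=\{1\}$ and $n=1$ for $G\ne\{1\}$) the non-existence of a cross-section will reduce to the following single fact, which I will use without comment thereafter: no continuous self-map $g\colon S^2\to S^2$ can satisfy $g(p)\notin\{p,-p\}$ for every $p$, because $g(p)\ne -p$ forces $g\simeq\mathrm{id}_{S^2}$ via the normalised straight-line homotopy in $\R^3$, while $g(p)\ne p$ similarly forces $g\simeq A$ (the antipodal map), and comparing degrees yields $1=-1$.

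For $G=\{1\}$, I would handle $n=1$ by the explicit cross-section $p\mapsto (p,-p)$. For $n\geq 3$, I would use the homeomorphism (\ref{Split Fib}) to identify $\pi^n\colon C_{n+1}(\PP^1)\to C_n(\PP^1)$ with $\mathrm{id}_{C_3(\PP^1)}\times \pi^{n-3}_{\PP^1\setminus\{0,1,\infty\}}$, where $\pi^0_{\PP^1\setminus\{0,1,\infty\}}$ is interpreted as the projection $C_1(\PP^1\setminus\{0,1,\infty\})\to C_0$ onto a one-point space: for $n=3$ any chosen point of $\PP^1\setminus\{0,1,\infty\}$ gives a cross-section, while for $n\geq 4$ Lemma \ref{lem secc} supplies one. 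For the remaining case $n=2$, I would assume a cross-section $s\colon C_2(S^2)\to C_3(S^2)$, precompose with the continuous embedding $p\mapsto (p,-p)$, and project onto the third coordinate, producing a map $g\colon S^2\to S^2$ with $g(p)\notin\{p,-p\}$, contradicting the fact above.

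For $G=\Z/2\Z$, I would rule out $n=1$ in the same way: a cross-section of $\pi^1\colon C_2^{\Z/2\Z}(S^2)\to S^2$ is encoded by its second coordinate, yielding a map $g\colon S^2\to S^2$ with $g(p)\notin\{p,-p\}$. For $n\geq 2$, the plan is to invoke the pullback description of $\pi^n$ along the map $f_n\colon C_n^{\Z/2\Z}(S^2)\to C_{2n}(S^2)$ from (\ref{pullback}) already used in the proof of Proposition \ref{prop nilp act}: $\pi^n$ is the pullback of $\pi^{2n}_{S^2}\colon C_{2n+1}(S^2)\to C_{2n}(S^2)$ along $f_n$. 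Since $2n\geq 4\geq 3$, the $G=\{1\}$ case just treated provides a cross-section of $\pi^{2n}_{S^2}$, and pulling it back along $f_n$ supplies the desired cross-section of $\pi^n$; the extra coordinate chosen by this pulled-back section avoids every $\pm p_i$, so its $\Z/2\Z$-orbit is automatically disjoint from all the orbits $\Z/2\Z\cdot p_i$.

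The hard part will be the degree argument underlying both non-existence statements; everything else amounts to bookkeeping on top of (\ref{Split Fib}), Lemma \ref{lem secc}, and the pullback identification from the proof of Proposition \ref{prop nilp act}.
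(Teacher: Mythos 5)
Your proposal is correct, and for the existence half it follows the same skeleton as the paper: the explicit antipodal section $p\mapsto(p,-p)$ for $n=1$, $G=\{1\}$; the splitting homeomorphism (\ref{Split Fib}) combined with Lemma \ref{lem secc} for $n\geq 3$; and the pullback of a section of $\pi^{n|G|}$ along $f_n$ for $G\neq\{1\}$, $n\geq 2$. Where you genuinely diverge is in the two non-existence cases. The paper obstructs a section homologically, using the homotopy equivalences $C_3(S^2)\sim\PP^3(\R)$ and $C_2^{\Z/2\Z}(S^2)\sim\PP^3(\R)$ established in Subsection \ref{S23}: a section would split $\pi_*$ (resp.\ $\pi^*$) and force $H_2(S^2,\Z)\simeq\Z$ to inject into $H_2(\PP^3(\R),\Z)=0$ (resp.\ $H^2(S^2)$ into $H^2(\PP^3(\R))$), which is absurd. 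You instead extract from a hypothetical section a self-map $g$ of $S^2$ with $g(p)\notin\{p,-p\}$ for all $p$ and derive the contradiction $\deg g=1=\deg g=-1$ from the two straight-line homotopies; this step, including the reduction of the $n=2$, $G=\{1\}$ case by precomposing with $p\mapsto(p,-p)$ and projecting to the third coordinate, is sound (the normalised segment from $g(p)$ to $p$ misses the origin exactly when $g(p)\neq -p$, and dually). Your argument is more self-contained, since it never needs to identify the total spaces up to homotopy with $\PP^3(\R)$; the paper's is shorter in context because those identifications are set up in Subsection \ref{S23} and reused anyway in the computation of the minimal models of $C_3(S^2)$ and $C_2^{\Z/2\Z}(S^2)$.
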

\begin{proof}
We first consider the case $G=\{1\}$, i.e. the classical configuration space $C_n(S^2)$. It follows from the previous lemma and the homeomorphism in (\ref{Split Fib}) that $\pi^n:C_{n+1}(S^2)\to C_n(S^2)$ admits a cross-section for $n\geq 3$. In the case $n=2$, there is no cross-section, since $H_2(S^2,\Z)\simeq \Z$ can not inject into $H_2(C_3(\PP^1),\Z)\simeq H_2(\PP^3(\R),\Z)=0$. Finally, for $\pi^1:C_2(S^2)\to C_1(S^2)$, the map $S^2\to S^2, x_1\mapsto (x_1,-x_1)$, where $-x_1$ is the antipodal point to $x_1$ defines a cross-section. This proves the case $G=\{1\}$. We assume that $G\neq \{1\}$ and we can assume that action is the antipodal action. The map $f_n:C_n^G(S^2)\to C_{n\vert G \vert} (S^2)$ defined by (\ref{pullback}) in the proof of proposition \ref{prop nilp act} gives a bundle equivalence between $\pi^n:C_{n+1}^G(S^2) \to C_n^G(S^2)$ and the pullback by $f_n$ of $\pi^{n\vert G \vert }:C_{n\vert G \vert +1}(S^2)\to C_{n\vert G\vert}(S^2)$. Hence, it follows from the case $G=\{1\}$ that $\pi^n:C_{n+1}^G(S^2) \to C_n^G(S^2)$ admits a cross-section for $n\geq 2$. It follows from the homotopy equivalence in equation (\ref{eq hom eq Z2}) that $H^2(S^2)$ can not inject into $H^2(C_n^{\Z/2\Z}(S^2))$ and hence $\pi^2:C_2^{\Z/2\Z}(S^2)\to S^2$ do not admit cross-sections. We have proved the proposition.
\end{proof}
\begin{corollary}\label{prop cross1}
The fibration $\pi_{n,k} : C_{n}^G(S^2) \to C_k^G(S^2)$ admits a cross-section in the following cases: $n\geq k \geq 3$ and $G=\{1\}$, $n=2$ and $G=\{1\}$, $n\geq k \geq 2$ and $G\neq \{1\}$.
\end{corollary}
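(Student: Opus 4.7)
The plan is to deduce the corollary from Proposition \ref{prop cross-section} by factoring $\pi_{n,k}$ as a composition of one-step projections:
$$\pi_{n,k} \;=\; \pi_{k+1,k} \circ \pi_{k+2,k+1} \circ \cdots \circ \pi_{n,n-1}.$$
If for each $m \in \{k,k+1,\ldots,n-1\}$ the fibration $\pi_{m+1,m} : C_{m+1}^G(S^2) \to C_m^G(S^2)$ admits a cross-section $s_m$, then the composite $s_{n-1}\circ s_{n-2}\circ\cdots\circ s_k$ is manifestly a cross-section of $\pi_{n,k}$. Hence the task reduces to checking, in each of the three regimes listed in the corollary, that Proposition \ref{prop cross-section} supplies such an $s_m$ for every required $m$.

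If $G=\{1\}$ and $n\geq k\geq 3$, every $m$ in the range satisfies $m\geq 3$, so the excluded index $m=2$ of Proposition \ref{prop cross-section} is avoided and each $s_m$ exists. If $G=\{1\}$ and $n=2$, then necessarily $k=1$, and $\pi_{2,1}$ is itself the one-step projection with $m=1\neq 2$, which admits a cross-section by Proposition \ref{prop cross-section}. If $G\neq\{1\}$ and $n\geq k\geq 2$, every $m$ in the range satisfies $m\geq 2$, and the $G\neq\{1\}$ clause of Proposition \ref{prop cross-section} furnishes each $s_m$.

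The argument is formal once Proposition \ref{prop cross-section} is in hand, and I expect no substantial obstacle. It is worth noting, however, that the range restrictions in the statement are precisely what is needed to prevent the chain from passing through an index at which the one-step projection lacks a section, namely $m=2$ when $G=\{1\}$ and $m=1$ when $G\neq\{1\}$; a section of $\pi_{n,k}$ in such a situation would in fact induce a section of the obstructing $\pi_{m+1,m}$ by further projection, contradicting Proposition \ref{prop cross-section}. Thus the list of cases in the corollary is essentially the optimal one that the compositional strategy can reach.
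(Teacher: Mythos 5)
Your proof is correct and is exactly the argument the paper intends: the corollary is stated without proof immediately after Proposition \ref{prop cross-section}, and the deduction is precisely the factorization $\pi_{n,k}=\pi_{k+1,k}\circ\cdots\circ\pi_{n,n-1}$ together with composition of the one-step cross-sections, with the case lists arranged so that no intermediate index hits the obstructed values $m=2$ (for $G=\{1\}$) or $m=1$ (for $G\neq\{1\}$). One caveat concerning only your closing aside: composing a section of $\pi_{n,k}$ with $\pi_{n,m}$ yields a section of $\pi_{m,k}$, not of the one-step projection $\pi_{m+1,m}$, so the asserted contradiction establishing optimality of the list does not follow as stated; this does not affect the corollary, which claims existence only in the listed cases.
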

The cross-section problem for classical configuration spaces of the $2$-sphere (and closed surfaces) has already been considered in the literature. We constructed the cross-sections, for the case of the $2$-sphere, to avoid the reader going through other references.
\section{$\psi$-homotopy groups and minimal models of orbit configuration spaces of surfaces}\label{S3}
In this section, and as in the previous one, $\bar{S}$ is a closed orientable surface, $S\subset \bar{S}$ is a cofinite set and $G$ is a finite group acting freely by homeomorphisms on $S$. Here, we establish main results $(2)$, $(3)$ (the part not concerning Malcev Lie algebras) and $(4)$ announced in the introduction of the paper. We proved, in subsection \ref{S22}, the existence of Sullivan models $A_{n,k}$ ($k<n$) for $C_{n}^G(S)$ (corollary \ref{cor model fib}). We show, that except for some cases when $S\simeq S^2$ these models are minimal.%and describe the minimal model in the exceptional cases. 
 We also show that $C_n^G(S)$ is a rational $K(\pi,1)$ if $S$ is not homeomorphic to $S^2$ and compute the hotompy space in degree $k\geq 2$ of $C_n^G(S^2)$. It follows from the computation that $C_n^G(S^2)$ is not a rational $K(\pi,1)$. We first prove the results for $S$ not homeomorphic to $S^2$ (Subsection \ref{S31}) then consider the case $S=S^2$ (Subsection \ref{S32}).\\\\
We will fix some notations and recall elements from the previous section. For $X$ a topological space, the minimal model of $X$ is designated by $(\Lambda V_X,m_X)$. As in the previous section $S_{G,k}$ denoted $S$ with $k$ orbits removed. For $k>n$, the fiber of $\pi_{n,k}:C_{n}^G(S) \to C_k^G(S)$ correspond to $C_{n-k}^G(S_{G,k})$. We have shown (Subsection \ref{S22}, corollary \ref{cor model fib}) that $C_{n}^G(S)$ has a Sullivan model $A_{n,k}\simeq \Lambda V_{C_k^G(S)}\otimes \Lambda V_{C_{n-k}^G(S_{G,k})}$ fitting in a cdga sequence:
\begin{equation}\label{eq model Cn+1}
\Lambda V_{C_k^G(S)}\to A_{n,k}=\Lambda V_{C_k^G(S)}\otimes \Lambda V_{C_{n-k}^G(S_{G,k})} \to \Lambda V_{C_{n-k}^G(S_{G,k})},
\end{equation}
where the first morphism is $\mathrm{id}\otimes 1$ and the second is obtained by applying the augmentation map of $\Lambda V_{C_k^{G}(S)}$. 
\subsection{The case $S$ not homeomorphic to $S^2$}\label{S31}
In this subsection we assume that $S$ is not homeomorphic to $S^2$. 
\begin{proposition}\label{prop SRK}
The surface $S$ is a rational $K(\pi,1)$, i.e the minimal model of $S$ is the $1$-minimal model of $S$ (or equivalently $\pi_\psi^k(S)=0$ for $k\geq 2$).
\end{proposition}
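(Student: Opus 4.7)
The plan is to apply the Papadima--Yuzvinsky criterion (Theorem \ref{Koszul KP1}), which states that a formal space with finite Betti numbers is a rational $K(\pi,1)$ if and only if its rational cohomology ring is Koszul. Finiteness of the Betti numbers of $S$ is immediate (or a special case of Proposition \ref{prop betti} with $n=1$, $G=\{1\}$), so the work is to check formality and Koszulness. I would split into two cases: $S \varsubsetneq \bar{S}$ (at least one puncture removed) and $S = \bar{S}$ closed of genus $g \geq 1$; the assumption that $S$ is not homeomorphic to $S^2$ ensures these two cases are exhaustive.

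If $S \varsubsetneq \bar{S}$, then $S$ is a non-compact surface and deformation retracts onto a finite graph, hence is homotopy equivalent to a wedge of circles. Such a space is formal (it is a $1$-dimensional CW complex, and equivalently all Massey products vanish since $H^{\geq 2}(S,\Q)=0$), and its cohomology algebra reduces to $H^*(S,\Q) = \Q \oplus H^1(S,\Q)$ with trivial multiplication in positive degrees. This is a quadratic algebra with zero relations, and is Koszul (its Koszul dual is the free associative algebra on $H^1(S,\Q)^{\vee}$). Theorem \ref{Koszul KP1} then gives the claim.

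If $S = \bar{S} = S_g$ is closed of genus $g \geq 1$, then $S$ carries the structure of a compact Riemann surface, hence is a K\"ahler manifold; by the Deligne--Griffiths--Morgan--Sullivan theorem, $S$ is formal. The rational cohomology algebra is the standard quadratic graded commutative algebra generated in degree $1$ with a single Poincar\'e-dual relation, and is a classical example of a Koszul algebra (its Koszul dual is the universal enveloping algebra of the quadratic presentation of the Malcev Lie algebra of $\pi_1 S_g$). Theorem \ref{Koszul KP1} once more gives the conclusion.

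The main non-trivial input is the Koszulness of $H^*(S_g,\Q)$ for $g \geq 1$; this is standard but not completely immediate, and is the step where the argument has the least of a priori justification from material already in the excerpt. An alternative proof bypasses Theorem \ref{Koszul KP1} entirely by constructing the minimal model of $S$ directly as a Cartan--Chevalley--Eilenberg-type cdga of the Malcev Lie algebra of $\pi_1 S$, which is a complete free Lie algebra when $S$ is non-compact and the complete surface Lie algebra when $S = S_g$; in both presentations the generators of the cdga sit in degree $1$ only, so the minimal model coincides with the $1$-minimal model, which is precisely the rational $K(\pi,1)$ property.
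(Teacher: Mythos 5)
Your argument is correct, but your primary route is not the one the paper takes first. The paper's main proof is the direct one: it observes that $S$ is either $\R^2$, a space homotopy equivalent to a bouquet of circles, or a closed orientable surface of genus $g\geq 1$, and in each case cites the explicit minimal model (from \cite{FelHar2}, \S 8.3 and \S 8.5), whose generating space $V_S$ is concentrated in degree $1$; this immediately gives $\pi_\psi^k(S)=0$ for $k\geq 2$. Your main argument --- formality plus Koszulness of $H^*(S,\Q)$ plus Theorem \ref{Koszul KP1} --- is exactly the route the paper sketches in one sentence as ``another way to prove this,'' with the Koszulness of $H^*(S_g,\Q)$ for $g\geq 1$ handled by a PBW-basis argument via \cite{Prid}; you correctly identify this as the step needing the most outside input. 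Conversely, the ``alternative proof'' you append at the end (a degree-$1$-generated Cartan--Chevalley--Eilenberg model of the Malcev Lie algebra of $\pi_1 S$) is essentially the paper's primary proof in disguise. The trade-off is clear: the direct minimal-model route is shorter and self-contained given the standard references, while your Koszulness route imports a nontrivial algebraic fact but illustrates the general mechanism (Theorem \ref{Koszul KP1}) that the paper later reuses in Section \ref{S5}. Two small cautions: your parenthetical that formality of a wedge of circles is ``equivalent'' to the vanishing of Massey products is not an equivalence in general (though formality of a wedge of circles is standard and true), and you should make explicit that the non-compact case includes the degenerate subcase $S\simeq\R^2$ (a wedge of zero circles), which the paper treats separately.
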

\begin{proof}

The surface $S$ is either homeomorphic to $\R^2$, either homotopy equivalent to a bouquet of circles or is closed and orientable of genus $g\geq 1$, and has finite Betti numbers. If $S$ is homeomorphic to $\R^2$ then the minimal model of $S$ is $\Lambda 0$ and the $\psi$-homotopy space of $S$ correspond to $0$. In the other cases, one can describe $\Lambda V_S$ (\cite{FelHar2}, §8.3 and §8.5), and $V_S$ is concentrated in degree $1$. This proves the proposition. Another way to prove this is to show that the cohomolgy ring of $S$ is Koszul, for instance using a PBW basis argument (\cite{Prid} theorem 5.3). Then conclude using theorem \ref{Koszul KP1}, since $S$ is formal (\cite{FelHar} p. 156 and 162).
\end{proof}

\begin{proposition}\label{prop RKPS}
For $n\geq 1$:
\begin{itemize}
\item[1)] The space $C_{n}^G(S)$ is a rational $K(\pi,1)$, i.e. $\pi_\psi^k(C_{n}^G(S))=0$ for $k\geq 2$.
\item[2)] The Sullivan model $A_{n,k} $ of $C_{n}^G(S)$ in (\ref{eq model Cn+1}) is minimal and $1$-minimal.
\end{itemize}
\end{proposition}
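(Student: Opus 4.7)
The plan is to prove (1) and (2) jointly by induction on $n$, uniformly over all surfaces of the allowed type, i.e.\ cofinite subsets of closed orientable surfaces that are not homeomorphic to $S^2$. The base case $n=1$ is immediate: $C_1^G(S)=S$, and Proposition \ref{prop SRK} gives that the minimal model of $S$ has generators concentrated in degree $1$, so $S$ is a rational $K(\pi,1)$ and (2) is vacuous.

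For the inductive step at $n\geq 2$, I would use the fibration $\pi^{n-1}:C_n^G(S)\to C_{n-1}^G(S)$ whose fiber is $S_{G,n-1}$. Since removing orbits cannot turn a non-$S^2$ surface into $S^2$, $S_{G,n-1}$ remains of the allowed type, so Proposition \ref{prop SRK} gives $V_{S_{G,n-1}}$ concentrated in degree $1$; and by induction hypothesis $V_{C_{n-1}^G(S)}$ is also in degree $1$. Corollary \ref{cor model fib} then supplies the Sullivan model $A_{n,n-1}=\Lambda V_{C_{n-1}^G(S)}\otimes \Lambda V_{S_{G,n-1}}$ of $C_n^G(S)$, whose generator space $V_{C_{n-1}^G(S)}\oplus V_{S_{G,n-1}}$ is concentrated in degree $1$. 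By the remark recorded in Subsection \ref{S12}, a Sullivan algebra with generators in degree $1$ is automatically minimal (and hence $1$-minimal), so $A_{n,n-1}$ is the minimal model of $C_n^G(S)$; combined with the finite Betti number statement of Proposition \ref{prop betti}, this yields $\pi_\psi^k(C_n^G(S))=0$ for $k\geq 2$, closing the induction for (1).

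Part (2) for arbitrary $k<n$ then drops out: (1) applied to the smaller orbit configuration spaces $C_k^G(S)$ and $C_{n-k}^G(S_{G,k})$ places both $V_{C_k^G(S)}$ and $V_{C_{n-k}^G(S_{G,k})}$ in degree $1$, so all generators of $A_{n,k}=\Lambda V_{C_k^G(S)}\otimes \Lambda V_{C_{n-k}^G(S_{G,k})}$ lie in degree $1$ and minimality together with $1$-minimality follow automatically by the same remark.

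I do not anticipate a substantial obstacle: the whole argument is driven by the fact that a Sullivan algebra whose generators are concentrated in degree $1$ is automatically minimal, so the induction reduces to propagating the rational $K(\pi,1)$ property through the rational fibration $\pi^{n-1}$ once both base and fiber already possess it. The only minor point of care is checking that the fiber $S_{G,n-1}$ remains in the class where Proposition \ref{prop SRK} applies, which is immediate from the hypothesis $S\not\simeq S^2$.
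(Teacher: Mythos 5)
Your proof is correct and follows essentially the same route as the paper: induction on $n$ along the tower of fibrations $\pi^k$, feeding in Proposition \ref{prop SRK} for the fibers and the inductive hypothesis for the base. The only cosmetic difference is that you close the induction by noting that $A_{n,n-1}$ is generated in degree $1$ and hence automatically minimal, whereas the paper invokes the long exact sequence (\ref{ex seq 2}) for part (1) and reserves the degree-$1$ minimality argument for part (2); the two mechanisms are equivalent here.
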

\begin{proof}
We recall that $\pi^k$ is the fibration $C_{k+1}^G(S) \to C_k^G(S)$. We have already shown that $C_n^G(S)$ has finite Betti numbers (proposition \ref{prop betti}). It follows from the previous proposition (from this subsection), that the base space of $\pi^2$ and the fibers of $\pi^k$ are rational $K(\pi,1)$ spaces. We prove $(1)$ by induction using these facts and the exact sequence (\ref{ex seq 2}) of subsection \ref{S13}. Since $A_{n,k}=\Lambda V_{C_k^G(S)}\otimes \Lambda V_{C_{n-k}^G(S_{G,k})}\simeq \Lambda (V_{C_k^G(S)}\oplus V_{C_{n-k}^G(S_{G,k})})$ and $\pi_\psi^*(X)=V_X$, we deduce from $(1)$ that $A_{n,k} \simeq \Lambda V$ with $V$ concentrated in degree $1$. This proves that $A_{n,k}$ is minimal. %Hence, one can show $(1)$ by induction using the exact sequence (\ref{ex seq 2}) of subsection \ref{S13}. In particular, the base $C_{n}^G(S)$ of $\pi^n$ is a rational $K(\pi,1)$ this implies that the connecting homomorphism in (\ref{ex seq 2}) is $0$ and hence the model in the proposition is minimal by theorem \ref{th minimal RF}. This proves $(2)$. We have proved the proposition.
\end{proof}
We have proved main result $(2)$ for $S$ not homeomorphic to $S^2$ and the part not concerning Malcev Lie algebras of main result $(3)$.
\subsection{The case $S=S^2$}\label{S32}
We recall that up to equivalence (Cf. subsection \ref{S21}) $C_n^G(S^2)$ is either the configuration space $C_n(S^2)$ either $C_n^{\Z/2\Z}(S^2)$, where the action of $\Z/2\Z$ is the antipodal action.
\begin{proposition}\label{prop model S2}
\begin{itemize}
\item[1)] The minimal model of $S$ is $\Lambda V_{S^2}$, where $V_{S^2}= (\Q x)_2 \oplus (\Q y)_3$ (indices for degree), with differential given by $dx=0, dy=x^2$; a filtration of Sullivan algebras can be given by: $V_{S^2}(0)=\Q \cdot x $, $V_{S^2}(k)=V_{S^2}$ for $k\geq 1$.
\item[2)] The $\psi$-homotopy space of $S^2$ is $(\Q)_2\oplus(\Q)_3$, where indices correspond to the degree.
\end{itemize}
\end{proposition}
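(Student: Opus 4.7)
The plan is to build the minimal Sullivan model of $S^2$ directly from its rational cohomology ring $H^*(S^2,\Q) \simeq \Q[x]/(x^2)$ (with $x$ in degree $2$) by the standard degree-by-degree procedure, and then to read off the $\psi$-homotopy space from the resulting generators.

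First, since $H^1(S^2,\Q) = 0$, no generator in degree $1$ is needed. To realise the nontrivial class in $H^2$, I adjoin a generator $x$ in degree $2$ with $dx = 0$ and a morphism $m \colon \Lambda(x) \to A_{PL}(S^2)$ sending $x$ to a cocycle representing the generator of $H^2(S^2,\Q)$. The cohomology of $\Lambda(x)$ is the polynomial algebra $\Q[x]$, which overshoots the target in degree $4$ and above, so the class $[x^2]$ must be killed in order to stop contributing. Since $m(x)^2$ is a coboundary in $A_{PL}(S^2)$ (it lies in $H^4(S^2,\Q) = 0$), I adjoin a generator $y$ in degree $3$ with $dy = x^2$ and extend $m$ by sending $y$ to a chosen primitive of $m(x)^2$.

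It remains to check that $\Lambda(x,y)$ with this differential has the correct cohomology. Graded commutativity forces $y^2 = 0$, so a $\Q$-basis of $\Lambda(x,y)$ is given by $\{x^k\}_{k \geq 0}$ and $\{x^k y\}_{k \geq 0}$. A direct computation gives $d(x^k) = 0$ and $d(x^k y) = x^{k+2}$, so the only cohomology classes that survive are $[1]$ in degree $0$ and $[x]$ in degree $2$, matching $H^*(S^2,\Q)$; hence $m$ is a quasi-isomorphism. Minimality is immediate since $dx = 0$ and $dy = x^2 \in \Lambda^{+} V \cdot \Lambda^{+} V$, and the filtration $V_{S^2}(0) = \Q\cdot x$, $V_{S^2}(k) = V_{S^2}$ for $k \geq 1$ satisfies the Sullivan conditions $dV(0) = 0$ and $dV(1) \subset \Lambda V(0)$. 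Part $(2)$ then follows at once from the uniqueness of the minimal model and the identification $\pi_\psi^*(S^2) = V_{S^2}$, yielding $(\Q)_2 \oplus (\Q)_3$. The main (mild) obstacle is the explicit cohomology computation of $\Lambda(x,y)$; once that is done, everything else is bookkeeping.
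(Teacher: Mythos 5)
Your proof is correct and follows exactly the standard degree-by-degree construction of the minimal model of an even-dimensional sphere; the paper's proof simply cites this construction (Félix--Halperin--Thomas, Example 1, p.~142) rather than writing it out, and then reads off $\pi_\psi^*(S^2)=V_{S^2}$ from minimality just as you do. Your explicit verification that $H^*(\Lambda(x,y),d)$ is spanned by $[1]$ and $[x]$ is the content of that cited example, so the two arguments coincide in substance.
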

\begin{proof}
The construction of $\Lambda V_{S^k}$ can be found in \cite{FelHar} (example 1, p. 142). Recall that if $\Lambda V$ is minimal then $\pi_\psi^*(\Lambda V)$ correspond to $V$ and hence the $\psi$-homotopy space of $S^2$ correspond to $(\Q)_2\oplus(\Q)_3$. We have proved the proposition.
\end{proof}
\begin{proposition}\label{prop models S2}
Take $n\geq 2$:
\begin{itemize}
\item[1)] 
\begin{itemize} 
\item[a)] For $G=\{1\}$, the Sullivan model $A_{n,k}$ of $C_{n}^G(S^2)$ as in (\ref{eq model Cn+1}) is minimal if and only if $k\geq 3$ or $n=2$.
\item[b)] For $G\neq \{1\}$, the Sullivan model $A_{n,k}$ of $C_{n}^G(S^2)$ as in (\ref{eq model Cn+1}) is minimal if and only if $k\geq 2$.
\end{itemize}
\item[2)] The minimal models of $C_3(S^2)$ and $C_2^{\Z/2\Z}(S^2)$ are isomorphic (as Sullivan algebras) and correspond to $\Lambda \Q\cdot z$, where $z$ is of degree $3$. The differential is null and one can take the constant filtration equal to $\Q\cdot z$.
\item[3)] The space $C_n^G(S^2)$ is not rational $K(\pi,1)$ spaces. The $\psi$-homotopy space $\pi_\psi^{\geq 2}(C_n^G(S^2))$ in degre $k\geq 2$ correspond to $(\Q)_2\oplus (\Q)_3$ if $n=1$ or $(G,n)=(\{1\},2)$, and to $(\Q)_3$ otherwise.
\end{itemize}
\end{proposition}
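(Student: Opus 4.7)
The three parts of the statement are logically intertwined, so the plan is to prove them in the order (2), then the ``if'' direction of (1), then (3), and finally the ``only if'' direction of (1). Part (2) is immediate from the homotopy equivalences recorded in Subsection \ref{S23}: the chain $C_3(S^2)\simeq C_3(\PP^1)\sim \PGL(\C^2)\sim \PSU(\C^2)\simeq \mathrm{SO}_3\simeq \PP^3(\R)$, together with equation (\ref{eq hom eq Z2}) giving $C_2^{\Z/2\Z}(S^2)\sim \PP^3(\R)$. Since $H^*(\PP^3(\R),\Q)\cong H^*(S^3,\Q)$, the minimal model is $\Lambda \Q z$ with $|z|=3$ and $dz=0$, constructed as in Proposition \ref{prop model S2}.

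For the ``if'' direction of Part (1), I invoke Corollary \ref{prop cross1}, which produces cross-sections of $\pi_{n,k}$ in the cases $n\geq k\geq 3$ with $G=\{1\}$ and $n\geq k\geq 2$ with $G\neq\{1\}$; since $\pi_{n,k}$ is a rational fibration by the results of Subsection \ref{S22}, Corollary \ref{sec RF} gives the minimality of $A_{n,k}$. The remaining minimal case, $(n,k)=(2,1)$ with $G=\{1\}$, is handled directly: the fiber $C_1(\R^2)=\R^2$ is contractible, so $A_{2,1}\simeq \Lambda V_{S^2}$ is minimal by Proposition \ref{prop model S2}. Part (3) then follows by unpacking these minimal models. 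For $n=1$ and for $(G,n)=(\{1\},2)$ one has $C_n^G(S^2)\sim S^2$ (the latter because the fiber $\R^2$ of $\pi^1:C_2(S^2)\to S^2$ is contractible), so Proposition \ref{prop model S2} yields $\pi_\psi^{\geq 2}=(\Q)_2\oplus(\Q)_3$; for $n=2$ with $G\neq\{1\}$ and for $n=3$ with $G=\{1\}$ the equivalence with $\PP^3(\R)$ gives $\pi_\psi^{\geq 2}=(\Q)_3$. For larger $n$, I use the minimal model $A_{n,k}$ with $k=3$ (if $G=\{1\}$) or $k=2$ (if $G\neq\{1\}$), from which $V_{C_n^G(S^2)}=V_{C_k^G(S^2)}\oplus V_{C_{n-k}^G(S_{G,k})}$; since $S_{G,k}$ is $S^2$ with at least one orbit removed, hence not homeomorphic to $S^2$, Proposition \ref{prop RKPS} concentrates the fiber summand in degree $1$, so $\pi_\psi^{\geq 2}(C_n^G(S^2))=\pi_\psi^{\geq 2}(C_k^G(S^2))=(\Q)_3$. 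In every case $\pi_\psi^{\geq 2}\neq 0$, so $C_n^G(S^2)$ is never a rational $K(\pi,1)$.

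For the ``only if'' direction of (1) I derive a contradiction from Part (3). In each remaining case---$G=\{1\}$ with $n\geq 3$ and $k\in\{1,2\}$, or $G\neq\{1\}$ with $n\geq 2$ and $k=1$---the base $C_k^G(S^2)$ is either $S^2$ or $C_2(S^2)\sim S^2$, hence satisfies $\pi_\psi^2(C_k^G(S^2))=\Q$. If $A_{n,k}$ were minimal it would be the minimal model of $C_n^G(S^2)$, giving $V_{C_n^G(S^2)}\supseteq V_{C_k^G(S^2)}$ as graded spaces and in particular $\pi_\psi^2(C_n^G(S^2))\supseteq \Q$, which contradicts the vanishing $\pi_\psi^2(C_n^G(S^2))=0$ obtained in Part (3). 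The main subtlety---indeed essentially the only point requiring care---is the logical ordering: the minimality statements in the ``if'' direction are driven entirely by the cross-sections of Corollary \ref{prop cross1}, so they are established independently of (3); the $\psi$-homotopy computation then uses only these minimal models, and the ``only if'' direction falls out by comparison.
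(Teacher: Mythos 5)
Your proposal is correct and follows essentially the same route as the paper: cross-sections (Corollary \ref{prop cross1}) plus Corollary \ref{sec RF} for the ``if'' part of (1), the homotopy equivalences with $\PP^3(\R)$ for (2), the decomposition $V_{C_n^G(S^2)}=V_{C_k^G(S^2)}\oplus V_{C_{n-k}^G(S^2_{G,k})}$ together with Proposition \ref{prop RKPS} for (3), and a contradiction with the vanishing of $\pi_\psi^2$ for the ``only if'' part. The only cosmetic difference is that you treat the case $(n,k)=(2,1)$, $G=\{1\}$ via contractibility of the fiber $\R^2$ rather than via the cross-section $x\mapsto(x,-x)$ that the paper's Corollary \ref{prop cross1} already supplies; both are fine.
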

\begin{proof}
We have proved in proposition \ref{prop cross1} that $\pi_{n,k}:C_{n}^G(S^2)\to C_k^G(S^2)$ admits a cross-section in the cases given in $(1.a)$ and $(1.b)$. The "if" part of $(1)$ follows from the existence of the cross-section by corollary \ref{sec RF}. We prove the "only if" part after proving $(2)$ and $(3)$. We now prove $(2)$. We have seen (subsection \ref{S23}) that $C_3(S^2)$ and $C_2^{\Z/2\Z}(S^2)$ are homotopy equivalent to the $3$-dimensional real projective space whose rational cohomology is $(\Q)_0\oplus (\Q)_3$ (indices for degree). Hence, by setting $dz=0$, the map $\Lambda \Q \cdot z\to A_{PL}(C_3(S^2))$ given by $z\mapsto \tilde{a}$ where $a$ is a lift of a non-trivial class in $H^3(A_{PL}(C_3(S^3)))\simeq H^3(C_3(S^3),\Q)$, gives a minimal model of $C_3(S^2)$. The same argument applies to $C_n^{\Z/2\Z}(S^2)$. We have proved $(2)$. We now prove $(3)$. By $(2)$ and the "if" part of $(1)$ the minimal model of $C_n^G(S^2)$ is:
$$ \Lambda V_{S^2} \text{ if $n=1$}, \quad \Lambda V_{S^2}\otimes \Lambda V_{\R^2} \text{ if $G=\{1\}$ and $n=2$},$$
and in the other cases it is of the form:
$$\Lambda \Q \cdot z \otimes \Lambda V_{C_{n-l}^G(S^2_{G,l})},$$
with $l=3$ if $G=\{1\}$ and $l=2$ if $G\neq \{1\}$, and where $z$ is of degree $3$. Hence, $\pi_\psi^*(C_n^G(S^2))$ is one of the spaces $V_{S^2}$, $V_{S^2} \oplus V_{\R^2}$ or $(\Q)_3\oplus V_{C_{n-l}^G(S^2_{G,l})}$ depending on $n$ and $G$. Now $V_{\R^2}=0$, $V_{S^2}=(\Q)_2\oplus (\Q)_3$ by the previous proposition, and $V_{C_{n-l}^G(S^2_{G,l})}$ is concentrated in degree $1$ (eventually null) by proposition \ref{prop RKPS}. Claim $(3)$ follows. We now prove the "only if" part of $(1)$. We first consider the case $(a)$ ($G=\{1\}$). If one of the Sullivan models $A_{n,2}$ or $A_{n,1}$ was minimal, then $\pi_{\psi}^*(C_n(S^2))$ will be isomorphic to $\pi_{\psi}^*(C_2(S^2))\oplus \pi_{\psi}^*(C_{n-2}(S^2_{\{1\},2})$ or $\pi_{\psi}^*(C_1(S^2))\oplus \pi_{\psi}^*(C_{n-1}(S^2_{\{1\},1})$, and $\pi_{\psi}^*(C_n(S^2))$ will have a non-trivial degree $2$ component ($\pi_{\psi}^*(C_2(S^2))$ and $\pi_{\psi}^*(C_1(S^2))$ do by $(3)$). This leads to a contradiction with $(3)$ for $n\geq 3$. Hence, for $n\geq 3$, $A_{n,2}$ and $A_{n,1}$ of $(a)$ are not minimal. This proves the "only if" part of $(1.a)$. The "only if" part of $(1.b)$ is obtained using the same reasoning. This completes the proof of $(1)$. We have proved the proposition.
\end{proof}
This completes the proof of main result $(2)$ and proves main result $(4)$.

\section{On minimal models and Malcev Lie algebras of orbit configuration spaces of surfaces}\label{S4}
In the first subsection, we recall general facts relating $1$-minimal models to Malcev Lie algebras, under reasonable assumption. This is done using a third part the Fundamental Lie algebra $L_{\Lambda V_X}$ of a minimal Sullivan algebra $\Lambda V_X$. For $X$ a path connected topological space with $H^1(X,\Q)$ finite dimensional, $L_{\Lambda V_X}$ is isomorphic to the Malcev Lie algebras $L_X$ of $\pi_1X$ and it determines entirely the $1$-minimal model of $X$. In particular, it determines the minimal model of $X$ if moreover $X$ is a rational $K(\pi,1)$, as the spaces $C_n^G(S)$ for $S$ not homeomorphic to $S^2$. This establishes the part concerning Malcev Lie algebras of main result $(3)$ and is used to prove main result $(6)$ (subsection \ref{S42}). In some cases, the Malcev Lie algebra of $C_n^G(S)$ is known and is isomorphic to the degree completion of a graded Lie algebra and the $1$-minimal model (and minimal model) can be recovered using a variant of the Cartan-Chevalley-Eilenberg cdga of Lie algebras (main result $(6)$). In subsection \ref{S43}, we prove main result $(5)$.

\subsection{$1$-minimal models and Malcev Lie algebras}\label{S41}
For $V$ a vector space and $U,V$ two vector subspaces of $V$, we denote by $U\wedge V$ and $U \wedge U$ the natural images of $U\otimes V$ and $U\otimes U$ in $\Lambda V$.
To a group $H$ one can associate functorially (\cite{Q1}) a filtered $\Q$-Lie algebra $L(H)$ called the Malcev Lie algebra of $H$ over $\Q$. We will denote by $\{F_i L(H)\}_{i\geq 1}$ the filtration of $L(H)$. Since we will use elements from \cite{FelHar2}, we note that the definition of a (minimal) Sullivan algebra \cite{FelHar2} is not exactly the one in \cite{FelHar} used here. But, the definitions are consistent as mentioned by the authors. It can be readily checked that (Minimal) Sullivan algebras we use here (following \cite{FelHar}) are (minimal) Sullivan algebras in the sense of \cite{FelHar2}. Moreover, minimal models in \cite{FelHar} are also unique. In particular, the minimal models from \cite{FelHar} and \cite{FelHar2} are isomorphic.\\\\
If $(\Lambda V,d)$ is minimal and $V_1$ is the degree $1$ part of $V$, then $d(V_1)\subset V_1\wedge V_1$. Important invariants of Sullivan algebras are their fundamental Lie algebra and homotopy Lie algebra. We will only use fundamental Lie algebras of minimal Sullivan algebras (Cf. \cite{FelHar} for definitions in the other cases) .
\begin{definition}[\cite{FelHar2}]
The fundamental Lie algebra of a minimal Sullivan algebra $(\Lambda V,d)$ is the dual space $V_1^\vee$ of $V_1$ with Lie bracket $d^\vee: V_1^\vee\to \wedge^2 V_1^\vee$ obtained by dualizing the restriction $V_1\to V_1\wedge V_1$ of $d$.
\end{definition}
\begin{remark}\label{rmk fund}
The fundamental Lie algebra $L_{(\Lambda V,d)}$ of a minimal Sullivan algebra $(\Lambda V, d)$ is also the fundamental Lie algebra of the $1$-minimal model $(\Lambda V_1,d)$ of $(\Lambda V,d)$ (Cf. remark \ref{rmk min 1min}).
\end{remark}
For $L$ a Lie algebra we denote by $\{\Gamma_i L\}_{i\geq 1}$ the lower central series of $L$: $\Gamma_1L=L$ and $\Gamma_{i+1}L=[L,\Gamma_i L]$ for $i\geq 1$. We will prove the following proposition in the appendix:
\begin{proposition}\label{prop HLie Mal}
If $X$ is a path connected topological space with $H^1(X,\Q)$ finite dimensional, then the fundamental Lie algebra of the minimal model (or equivalently of the $1$-minimal model) of $X$ is isomorphic to the Malcev Lie algebra $L_X$ of $\pi_1(X)$, and the filtration $\{F_iL_X\}_{i\geq 1}$ is the lower central series filtration $\{\Gamma_i L_X\}_{i\geq 2}$.
\end{proposition}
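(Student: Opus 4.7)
The plan is to establish the proposition by identifying two constructions of the Malcev completion and checking they yield the same filtered Lie algebra. On one side is Quillen's complete Hopf algebra construction: form $\widehat{\Q[\pi_1 X]}$, the $I$-adic completion of the group algebra at the augmentation ideal $I$, and let $L_X$ be its Lie algebra of primitive elements with the induced filtration. On the other side is the fundamental Lie algebra $V_1^\vee$ of the $1$-minimal model $(\Lambda V_1, d)$, with bracket dual to $d : V_1 \to V_1 \wedge V_1$, filtered by the annihilators of the Hirsch filtration $V_1(0) \subset V_1(1) \subset \cdots$. By remark \ref{rmk fund}, working with the $1$-minimal model rather than the full minimal model costs nothing.

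First I would exploit the hypothesis $\dim H^1(X,\Q) < \infty$ to arrange each Hirsch step $V_1(k)$ to be finite dimensional; the condition $dV_1(k) \subset \Lambda V_1(k-1)$ then forces $V_1(k)^\vee$ to be a finite-dimensional nilpotent $\Q$-Lie algebra, so $V_1^\vee$ is a pronilpotent complete filtered Lie algebra whose exponential is a prounipotent $\Q$-group.

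Next I would produce the natural comparison morphism. Using the quasi-isomorphism $m_1 : \Lambda V_1 \to A_{PL}(X)$ and the Sullivan-de Rham machinery converting minimal Sullivan algebras in degree $1$ into rational nilpotent homotopy types (the same correspondence that underlies the Bousfield-Kan $\Q$-completion invoked in subsection \ref{S12}), one obtains a natural map $\pi_1 X \to \exp(V_1^\vee)$ into a prounipotent $\Q$-group. By the universal property of Quillen's Malcev completion this factors through $\widehat{\Q[\pi_1 X]}$ and induces a continuous Hopf algebra morphism; restricting to primitives yields the Lie algebra isomorphism $L_X \simeq V_1^\vee$. The careful bookkeeping - tracking compatible Hopf structures on both sides so that primitives map to primitives and filtrations are respected - is exactly what the announced appendix is set up to handle.

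The main obstacle is the identification of the Malcev filtration with the (shifted) lower central series. I would argue by induction on the Hirsch filtration: the associated graded of the dual filtration on $V_1^\vee$ is abelian at each step, so an $i$-fold iterated bracket lands in the $i$-th filtration step, giving $\Gamma_i L_X \subseteq F_i L_X$. The reverse inclusion follows from the classical fact that on finite-dimensional nilpotent $\Q$-Lie algebras Quillen's $I$-adic filtration agrees with the lower central series, then passing to the inverse limit over the Hirsch tower. The index shift in the statement is a consequence of the convention $F_1 L_X = L_X = \Gamma_1 L_X$ versus reindexing $F$ so that $F_1 L_X = [L_X, L_X] = \Gamma_2 L_X$; this is bookkeeping and not a substantive difficulty.
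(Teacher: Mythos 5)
Your overall strategy---comparing Quillen's completion $\mathcal{P}_F(\Q[\pi_1X]\:\hat{}\,)$ with the fundamental Lie algebra $V_1^\vee$ of the $1$-minimal model---is the same as the paper's, but the central step is asserted rather than proved. The universal property of Quillen's Malcev completion only produces a \emph{morphism} out of $\Q[\pi_1X]\:\hat{}$, hence a morphism $L_X\to V_1^\vee$ on primitives; it does not make that morphism an isomorphism. To conclude you must know that $\pi_1X\to \exp(V_1^\vee)=\pi_1(\Lambda V_X)$ is itself a Malcev $\Q$-completion, i.e.\ that it induces an isomorphism $\gr(\pi_1X)\otimes\Q\to\gr\exp(V_1^\vee)$ onto a complete, separated target, and you then need a criterion (the paper's lemma \ref{lem grf}, exploited in proposition \ref{prop comp}) saying that a filtered morphism which is an isomorphism on associated gradeds between such objects is an isomorphism. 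The Bousfield--Kan result you invoke from subsection \ref{S12} concerns only $\pi_k$ for $k\geq 2$ and says nothing about $\pi_1$; the actual input is corollary 7.4 of \cite{FelHar2}, quoted in the paper as proposition \ref{Mal comp FHT}. This is the substantive content of the proposition and cannot be filed under ``bookkeeping.''

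Two smaller points. First, your reverse inclusion $F_iL_X\subseteq\Gamma_iL_X$ ``by passing to the inverse limit over the Hirsch tower'' hides a genuine step: the lower central series of an inverse limit can be strictly larger than the union of iterated brackets (i.e.\ $\varprojlim\Gamma_i$ of the finite quotients need not land in the algebraic $\Gamma_i$ of the limit), and one needs finite generation in degree $1$ to control this; that is exactly the paper's lemma \ref{lem filgrL}, which reduces proposition \ref{prop CHA} to the free complete Hopf algebra $\Q\langle\langle x_1,\dots,x_m\rangle\rangle$. Second, there is no index shift to explain away: the intended statement is $F_iL_X=\Gamma_iL_X$ for all $i\geq 1$ (as the paper records later), so reindexing so that $F_1L_X=[L_X,L_X]$ would contradict $F_1L_X=L_X$; the ``$i\geq 2$'' in the statement is a typo, not a convention you need to match.
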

For $L$ a Lie algebra the Lie bracket gives a linear map $L^\vee \to L^\vee \wedge L^\vee$, where $L^\vee$ is the dual space of $L$. Since the bracket satisfies the Jacobi identity, the previous map extends to a differential $d_L$ on $\Lambda L^\vee$, where $L^\vee$ is seen as a graded space concentrated in degree $1$. This construction in know as Cartan-Chevalley-Eilenberg construction and the cdga $(\Lambda L^\vee, d_L)$ will also be denoted by $C^*(L)$. A morphism of Lie algebras $L\to M$ induces a cdga morphism $C^*(M)\to C^*(L)$.
\begin{proposition}\label{prop CCE model}
If $X$ is a path connected CW complex and $H^1(X,\Q)$ is finite dimensional, then $F_iL_X=\Gamma_i L_X$ for $i\geq 1$ and the Lie algebra $L_X$ entierly determines the $1$-minimal model of $X$. Moreover, the cdga $A_X=\varinjlim C^*(L_X/\Gamma_i L_X)$ is a $1$-minimal model of $X$.
\end{proposition}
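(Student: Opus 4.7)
The plan is as follows. The equality $F_iL_X=\Gamma_iL_X$ is exactly the content of Proposition \ref{prop HLie Mal}, so only the second assertion requires work. To prove that $L_X$ determines the $1$-minimal model and that $A_X$ realizes it, I would build the $1$-minimal model $(\Lambda V_1,d)$ of $X$ as an increasing union of finite-dimensional Hirsch extensions and identify each stage with the Chevalley--Eilenberg cdga of a finite-dimensional nilpotent quotient of $L_X$.

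First, construct $(\Lambda V_1,d)$ inductively: set $V_1(0)=H^1(X,\Q)^\vee$ with $d=0$, and at each stage $k$ adjoin a finite-dimensional subspace $W_k$ with $dW_k\subset \Lambda^2 V_1(k-1)$ chosen so as to kill the appropriate degree-$2$ cohomology classes (the standard Hirsch extension prescription for $1$-minimal models). Since $H^1(X,\Q)$ is finite-dimensional, each successive quotient $\Gamma_jL_X/\Gamma_{j+1}L_X$ is finite-dimensional as a subquotient of a tensor power of the abelianization, and by induction each $V_1(k)$ is then finite-dimensional as well. Setting $V_1=\bigcup_k V_1(k)$, the cdga $(\Lambda V_1,d)$ is a $1$-minimal model of $X$, whose fundamental Lie algebra equals $L_X$ by Proposition \ref{prop HLie Mal}. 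Each $(\Lambda V_1(k),d)$ is itself a minimal Sullivan algebra concentrated in degree $1$ with finite-dimensional generators, so its fundamental Lie algebra $L_k:=V_1(k)^\vee$ is finite-dimensional nilpotent, and the Chevalley--Eilenberg construction gives a canonical isomorphism $\Lambda V_1(k)\simeq C^*(L_k)$ via the well-known contravariant equivalence between finite-dimensional nilpotent $\Q$-Lie algebras and minimal $1$-Sullivan algebras with finite-dimensional generators.

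To conclude, the inclusions $\Lambda V_1(k-1)\hookrightarrow \Lambda V_1(k)$ dualize to central surjections $L_k\twoheadrightarrow L_{k-1}$, and by comparing the Sullivan filtration on $V_1$ with the lower central series filtration on $L_X$ via Proposition \ref{prop HLie Mal} one identifies $L_k\simeq L_X/\Gamma_{c(k)}L_X$ for an appropriate cofinal indexing $c(k)\to\infty$. Applying $C^*$ and passing to direct limits then gives
\[
A_X=\varinjlim_i C^*(L_X/\Gamma_iL_X)\;\simeq\;\varinjlim_k \Lambda V_1(k)\;=\;\Lambda V_1,
\]
so $A_X$ is a $1$-minimal model of $X$. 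The main obstacle is precisely this index-matching step $L_k\simeq L_X/\Gamma_{c(k)}L_X$: although Proposition \ref{prop HLie Mal} identifies the completed fundamental Lie algebra with $L_X$ filtered by $\{\Gamma_i\}$, one must verify that the Sullivan filtration $\{V_1(k)\}$ and the filtration dual to $\{\Gamma_iL_X\}$ are cofinal, so that the two inverse systems of nilpotent quotients have the same direct limit under $C^*$. This is where the combinatorics of Hirsch extensions and of iterated central extensions of nilpotent Lie algebras have to be carefully unpacked.
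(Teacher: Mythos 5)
Your reduction of the first claim to Proposition \ref{prop HLie Mal} matches the paper, and your overall strategy for the second claim (realize the $1$-minimal model as a union of Hirsch extensions, identify each finite stage with $C^*$ of a finite-dimensional nilpotent quotient of $L_X$, pass to the limit) is the classical Sullivan--Morgan argument and would work. But the paper does not run this argument: it simply observes that $\Lambda (V_X)_1$ is the $1$-minimal model of $\Lambda V_X$, that $H^1(\Lambda (V_X)_1)=H^1(X,\Q)$ is finite dimensional, and then invokes a remark of \cite{FelHar2} (p.~55) asserting that for any minimal Sullivan algebra $A=\Lambda V$ with $V$ concentrated in degree $1$ and $H^1(A)$ finite dimensional one has $\varinjlim C^*(L_A/\Gamma_i L_A)=A$; Proposition \ref{prop HLie Mal} then converts $L_A$ into $L_X$ with its lower central series. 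So your proposal is a self-contained expansion of exactly the cited remark.

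The problem is that you leave unproven the one step that carries all the content, and you say so yourself: the identification $L_k\simeq L_X/\Gamma_{c(k)}L_X$ together with the cofinality of the tower $\{V_1(k)^\vee\}$ with the tower of lower-central-series quotients. Without this, the computation $\varinjlim_i C^*(L_X/\Gamma_i L_X)\simeq\varinjlim_k \Lambda V_1(k)$ is not established, and this is precisely what the reference supplies in the paper's proof. The issue is not cosmetic: the filtration $\{V_1(k)\}$ produced by an arbitrary Hirsch-extension construction is not canonical, and the dual surjections $V_1^\vee\to V_1(k)^\vee$ need not have kernels equal to terms of the lower central series for your chosen stages. The clean way to close the gap is to use the canonical word-length filtration $V_1(0)=\ker d\cap V_1$, $V_1(k)=d^{-1}\bigl(\Lambda^2 V_1(k-1)\bigr)\cap V_1$, for which one can check directly (using that $L_{\Lambda V_1}/\Gamma_2\simeq H^1(X,\Q)$ is finite dimensional, so each $V_1(k)$ is finite dimensional) that $\ker\bigl(V_1^\vee\to V_1(k)^\vee\bigr)=\Gamma_{k+2}L_{\Lambda V_1}$; this is the analogue for the fundamental Lie algebra of the paper's Lemma \ref{lem filgrL}. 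Either carry out that verification or cite the remark in \cite{FelHar2} as the paper does; as written, your argument assumes its conclusion at the decisive point.
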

\begin{proof}
If $V$ is concentrated in degree $1$, then a Sullivan algebra $A=\Lambda V$ is minimal. If moreover $H^1(A)$ is finite dimensional then $\varinjlim C^*(L_A/\Gamma_i L_A)=A$ (Cf. \cite{FelHar2}, remark p. 55). The Sullivan algebra $\Lambda (V_X)_1$ is the $1$-minimal model of $\Lambda V_X$. Hence, $L_{\Lambda V_X}=L_{\Lambda (V_X)_1}$ and $\varinjlim C^*(L_{\Lambda V_X}/\Gamma_i L_{\Lambda V_X})=\varinjlim C^*(L_{\Lambda (V_X)_1}/\Gamma_i L_{\Lambda (V_X)_1})=\Lambda (V_X)_1$. Indeed, $H^1(\Lambda (V_X)_1)=H^1(\Lambda V_X)=H^1(X)$. The proposition now follows from the previous one.
\end{proof}

\begin{remark}
See also \cite{MGRH} where it is assumed that $\pi_X$ is finitely presented. This is the case of the spaces $C_n^G(S)$ but we did not address the question.
\end{remark}
For $L=\oplus_{i\geq 1} L_i$ a graded Lie algebra (i.e. $[L_i,L_j] \subset L_{i+j}$), we denote by $C_{\mathrm{gr}}^*(L)$ the cdga obtained by replacing $L^\vee$ in the construction of $C^*(L)$ by the graded dual $L'$ of $L$. We denote by $\hat{L}=\prod_{i\geq 1} L_i$ the degree completion of $L$.
\begin{lemma}\label{lem filgrL}
Let $L$ be a graded Lie algebra as above, generated by $L_1$ which is finite dimensional. The vector subspace $\prod_{j\geq i} L_j \subset \hat{L}$ is equal to $\Gamma_i \hat{L}$.
\end{lemma}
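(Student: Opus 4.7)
The plan is to prove both inclusions separately. The easy inclusion $\Gamma_i \hat{L} \subset \prod_{j\geq i} L_j$ follows purely from the graded nature of the bracket: since $[L_p, L_q] \subset L_{p+q}$ extends to give $[\prod_{j\geq p} L_j, \prod_{j\geq q} L_j] \subset \prod_{j\geq p+q} L_j$ in the completion, a straightforward induction starting from $\Gamma_1 \hat{L} = \hat{L} = \prod_{j\geq 1} L_j$ yields the desired containment.

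The substantive inclusion $\prod_{j\geq i} L_j \subset \Gamma_i \hat{L}$ is where the finite-dimensionality of $L_1$ is essential. Fix a basis $e_1,\dots,e_m$ of $L_1$. Since $L$ is generated by $L_1$, we have $L_n = [L_1, L_{n-1}]$ for all $n\geq 2$, so each $y_n \in L_n$ admits a decomposition $y_n = \sum_{k=1}^m [e_k, w_{n,k}]$ with $w_{n,k} \in L_{n-1}$ (choose any such decomposition; the finite range of $k$ is the key feature). Given $y = (y_i, y_{i+1}, \dots) \in \prod_{j\geq i} L_j$, apply this decomposition in each degree and assemble
\[W_k^{(1)} := \sum_{n\geq i} w_{n,k} \in \prod_{n\geq i-1} L_n \subset \hat{L},\]
which is well-defined in the completion because it has at most one component in each degree. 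Then $y = \sum_{k=1}^m [e_k, W_k^{(1)}]$ is a \emph{finite} sum of brackets of elements of $\hat{L}$.

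Now iterate: at stage $s<i-1$, each $W_{k_1,\dots,k_s}^{(s)} \in \prod_{n\geq i-s} L_n$ decomposes in the same way as $\sum_{k_{s+1}=1}^m [e_{k_{s+1}}, W_{k_1,\dots,k_{s+1}}^{(s+1)}]$ with $W^{(s+1)} \in \prod_{n\geq i-s-1} L_n$. After $i-1$ iterations one obtains
\[y \;=\; \sum_{(k_1,\dots,k_{i-1}) \in \{1,\dots,m\}^{i-1}} [e_{k_1},[e_{k_2},\dots,[e_{k_{i-1}}, W_{k_1,\dots,k_{i-1}}^{(i-1)}]\dots]],\]
a \emph{finite} sum of $i$-fold iterated brackets in $\hat{L}$, hence $y \in \Gamma_i\hat{L}$.

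The only mildly delicate point is the bookkeeping at each stage to confirm that $W^{(s)}$ lives in $\prod_{n\geq i-s} L_n$, which follows from $w_{n,k}\in L_{n-1}$ together with a simple index shift. Note that the argument genuinely uses $\dim L_1 < \infty$: if $L_1$ were infinite-dimensional the last-stage sum would be infinite, and the result would then require a topological closure of $\Gamma_i \hat{L}$ rather than the algebraic lower central series.
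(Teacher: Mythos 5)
Your proof is correct and follows essentially the same route as the paper's: both exploit a finite basis of $L_1$ together with the identity $L_n=[L_1,L_{n-1}]$ to rewrite an element of $\prod_{j\geq i}L_j$ as a \emph{finite} sum of $i$-fold iterated brackets, with the degree-wise remainders reassembled into single elements of $\hat{L}$. The only differences are presentational: the paper applies all $i$ operators $\mathrm{ad}(x_{\alpha_1})\circ\cdots\circ\mathrm{ad}(x_{\alpha_i})$ in one shot (treating the degree-$i$ term $z_i$ separately), whereas you peel off one $\mathrm{ad}$ per stage, and you also spell out the easy inclusion $\Gamma_i\hat{L}\subset\prod_{j\geq i}L_j$, which the paper leaves implicit.
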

\begin{proof}
Let $x_1,\dots,x_m$ be a basis of $L_1$. For any $k\geq 1$ we have $L_{k+1}=[L_1,L_k]$. Hence, an elment $z_j\in L_j$ with $j>i$ is of the form: 
$$ \sum_{\underline{\alpha} \in [1,m]^i} \mathrm{ad}(x_{\alpha_1})\circ\cdots \circ \mathrm{ad}(x_{\alpha_i})(z_{\underline{\alpha},j}),$$
for some elements $z_{\underline{\alpha},j}\in L_{j-i}$ and where $\underline{\alpha}=(\alpha_1,\dots, \alpha_i)$. Therefore, an element of $F_i \hat{L}$ is of the form 
$$z_i+ \sum_{\underline{\alpha} \in [1,m]^i} \mathrm{ad}(x_{\alpha_1})\circ\cdots \circ \mathrm{ad}(x_{\alpha_i})(\sum_{j>i} z_{\underline{\alpha},j}),$$
for some $z_i\in L_i$ and $z_{\underline{\alpha},j} \in L_{j-i}$. This proves the proposition.
\end{proof}
\begin{lemma}
Let $L$ be a graded Lie algebra as above, generated by $L_1$ which is finite dimensional, and let $\hat{L}$ be the degree completion of $L$. The cdga $\varinjlim C^*(\hat{L}/\Gamma_i \hat{L})$ is isomorphic to $C_{\mathrm{gr}}^*(L)$.
\end{lemma}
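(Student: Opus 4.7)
The plan is to use the previous lemma to explicitly identify the quotients $\hat{L}/\Gamma_i \hat{L}$ as finite-dimensional Lie algebras, then match the direct limit of their Cartan--Chevalley--Eilenberg cdga's with $C^*_{\mathrm{gr}}(L)$ level-by-level: first on the generating subspace in degree $1$, then on the differential.

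First I would observe that since $L$ is generated by $L_1$ which is finite dimensional, each graded piece $L_j$ is finite dimensional. By the previous lemma, $\Gamma_i \hat{L} = \prod_{j \geq i} L_j$, so the natural map
\[ \hat{L}/\Gamma_i \hat{L} \;\longrightarrow\; L_1 \oplus \cdots \oplus L_{i-1} \]
is a Lie algebra isomorphism, where the right-hand side carries the truncated bracket $[a,b]_i = [a,b]_L$ if $\deg(a)+\deg(b) < i$ and $0$ otherwise. Dualizing gives a canonical identification $(\hat{L}/\Gamma_i \hat{L})^\vee \cong L_1^\vee \oplus \cdots \oplus L_{i-1}^\vee$. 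For $i \leq j$, the quotient map $\hat{L}/\Gamma_j \hat{L} \to \hat{L}/\Gamma_i \hat{L}$ is a Lie algebra morphism, whose dual is the obvious inclusion $L_1^\vee \oplus \cdots \oplus L_{i-1}^\vee \hookrightarrow L_1^\vee \oplus \cdots \oplus L_{j-1}^\vee$.

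Next I would pass to the direct limit. Since the free graded commutative algebra functor $\Lambda$ commutes with colimits, and since taking the limit at the level of generating spaces yields
\[ \varinjlim_{i} (\hat{L}/\Gamma_i \hat{L})^\vee \;=\; \bigoplus_{j \geq 1} L_j^\vee \;=\; L', \]
we get $\varinjlim C^*(\hat{L}/\Gamma_i \hat{L}) \cong \Lambda L'$ as graded commutative algebras, with $L'$ placed in degree $1$. It remains to check that the differentials agree. For $\phi \in L_p^\vee$, viewed in $(L_1^\vee \oplus \cdots \oplus L_{i-1}^\vee)$ for any $i > p$, the Cartan--Chevalley--Eilenberg differential applied to $\phi$ is the map $a \wedge b \mapsto -\phi([a,b]_i)$. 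For $a \in L_q, b \in L_r$, this is $-\phi([a,b]_L)$ whenever $q+r < i$ (which is automatic if the evaluation is to be nonzero, since $\phi$ is concentrated in degree $p < i$), and is independent of $i$. Hence the transition maps in the direct system are compatible with the differentials, and the limiting differential on $\phi \in L_p^\vee$ is exactly the dual of the graded bracket $L_q \otimes L_r \to L_p$ for $q+r=p$, which is the defining differential of $C^*_{\mathrm{gr}}(L)$.

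The only mildly delicate point — and the place I would be most careful — is verifying that the direct system of differentials commutes with the dualized quotient maps, because the bracket on $\hat{L}/\Gamma_i \hat{L}$ is not a graded restriction of the bracket on $L$ but rather its truncation. The check above shows that the truncation is invisible to forms $\phi$ whose support lies in sufficiently low degree, and this is what ensures compatibility. Once this is verified, the identification $\varinjlim C^*(\hat{L}/\Gamma_i \hat{L}) \cong C^*_{\mathrm{gr}}(L)$ is an isomorphism of cdga's, as claimed.
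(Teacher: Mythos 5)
Your proof is correct and follows essentially the same route as the paper: both use the preceding lemma to identify $\Gamma_i\hat{L}=\prod_{j\geq i}L_j$, hence $\hat{L}/\Gamma_i\hat{L}$ with the finite-dimensional truncation $L_1\oplus\cdots\oplus L_{i-1}$ (the paper phrases this as $L/\Gamma_iL\simeq\hat{L}/\Gamma_i\hat{L}$), then dualize and pass to the direct limit of the free algebras. Your explicit verification that the truncated brackets induce compatible differentials is a point the paper leaves implicit, and it is carried out correctly.
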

\begin{proof}
Since $L_1$ is finite dimensional and generates $L$, we have compatible, natural isomorphisms $L/\Gamma_i L\to \hat{L}/\Gamma_i \hat{L}$ (the maps are isomorphisms by lemma \ref{lem filgrL}) and for $i\geq 1$ and $L_i$ is finite dimensional ($i\geq 1$). Hence, $\varinjlim C^*(\hat{L}/\Gamma_i \hat{L})\simeq \varinjlim C_{\mathrm{gr}}^*(L/\Gamma_i L)$. The quotient maps $L\to L/\Gamma_iL$, induce compatible morphisms $C_{\mathrm{gr}}^*(L/\Gamma_i L) \to C_{\mathrm{gr}}^*(L)$. Hence, we have a natural morphism $\varinjlim C_{\mathrm{gr}}^*(L/\Gamma_i L) \to C_{\mathrm{gr}}^*(L)$. The morphism is an isomorphism, since $\varinjlim_i (\Lambda \oplus_{j<i} L_i' )\simeq \Lambda L'$ (' is for graded dual). We have proved the lemma.
\end{proof}
\begin{proposition}\label{prop CCE model}
Let $X$ be a path connected CW complex with $H^1(X,\Q)$ finite dimensional. Assume that $L_X$ is the degrre completion of a graded Lie algebra $\mathcal{L}_X$ generated by its finite dimensional degree $1$ component. The dga $C_{\mathrm{gr}}(\mathcal{L})$ is a $1$-minimal model of $X$. Moreover, if $X$ is a rational $K(\pi,1)$, then $C_{\mathrm{gr}}(\mathcal{L})$ is the minimal model of $X$.
\end{proposition}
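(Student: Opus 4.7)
The plan is to assemble the statement directly from the preceding results; no new substantive computation is required, only a careful chaining of isomorphisms.

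First I would invoke the earlier Proposition (the one asserting that $A_X = \varinjlim C^*(L_X/\Gamma_i L_X)$ is a $1$-minimal model of $X$ whenever $X$ is a path connected CW complex with $H^1(X,\Q)$ finite dimensional). This reduces the problem to identifying $\varinjlim C^*(L_X/\Gamma_i L_X)$ with $C_{\mathrm{gr}}^*(\mathcal{L}_X)$ as cdga's. By hypothesis $L_X = \hat{\mathcal{L}}_X$, so this is exactly the content of the immediately preceding Lemma, which gives a natural isomorphism $\varinjlim C^*(\hat{\mathcal{L}}_X/\Gamma_i \hat{\mathcal{L}}_X) \simeq C_{\mathrm{gr}}^*(\mathcal{L}_X)$. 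Combining the two yields a quasi-isomorphism $C_{\mathrm{gr}}^*(\mathcal{L}_X) \to A_{PL}(X)$ exhibiting $C_{\mathrm{gr}}^*(\mathcal{L}_X)$ as a $1$-minimal model of $X$. Here one uses the standing finiteness assumption on $(\mathcal{L}_X)_1$ so that the identification $L/\Gamma_i L \to \hat{\mathcal{L}}_X/\Gamma_i \hat{\mathcal{L}}_X$ from the Lemma is valid.

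For the second assertion, I would use the definition of rational $K(\pi,1)$ (condition (2) of the definition in Subsection \ref{S12}): when $X$ is a rational $K(\pi,1)$, the $1$-minimal model and the minimal model of $X$ coincide up to isomorphism. Since we have just identified $C_{\mathrm{gr}}^*(\mathcal{L}_X)$ as a $1$-minimal model, it is therefore also the minimal model of $X$.

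The only real thing to check is that $C_{\mathrm{gr}}^*(\mathcal{L}_X)$ is literally a Sullivan algebra in the sense used throughout the text, i.e.\ that it is of the form $\Lambda V$ with $V$ concentrated in degree $1$ and equipped with a filtration satisfying $dV(k)\subset \Lambda V(k-1)$. This is immediate from the grading: take $V(k) = \bigoplus_{j \le k+1}(\mathcal{L}_X)_j'$ (graded dual), and note that the Cartan--Chevalley--Eilenberg differential $d_{\mathcal{L}_X}$ sends $(\mathcal{L}_X)_{k+1}'$ into $\bigoplus_{i+j=k+1, \, i,j\ge 1}(\mathcal{L}_X)_i'\wedge (\mathcal{L}_X)_j' \subset \Lambda V(k-1)$ because the bracket is graded. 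There is no real obstacle; the main subtlety, and the only place where care is needed, is to keep track of the fact that the Lemma required $(\mathcal{L}_X)_1$ to be finite dimensional and $\mathcal{L}_X$ generated by this piece, both of which are hypotheses of the proposition.
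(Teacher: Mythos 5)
Your proposal is correct and follows essentially the same route as the paper, whose proof simply cites the preceding lemma (identifying $\varinjlim C^*(\hat{\mathcal{L}}_X/\Gamma_i \hat{\mathcal{L}}_X)$ with $C_{\mathrm{gr}}^*(\mathcal{L}_X)$) together with the preceding proposition on $A_X=\varinjlim C^*(L_X/\Gamma_i L_X)$ being a $1$-minimal model. Your additional checks (that $C_{\mathrm{gr}}^*(\mathcal{L}_X)$ is a Sullivan algebra via the grading filtration, and the appeal to condition $(2)$ of the rational $K(\pi,1)$ definition for the second assertion) are details the paper leaves implicit but are entirely consistent with its argument.
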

\begin{proof}
The proposition follows from the previous lemma and the previous proposition.
\end{proof}
\begin{remark}\label{rmk CCE}
The assumption on $L_X$ of the proposition is equivalent to the fact that $L_X$ is the degree completion of its associated graded. This corresponds to the notion of filtered-formality of \cite{SucWan}, which holds for instance for configuration spaces of closed orientable surfaces (Cf. next subsection). The $1$-formality and formality (implying $1$-formality) properties imply filtered-formality. This gives a wide class of examples.
\end{remark}
\subsection{Applications to orbit configuration spaces of surfaces}\label{S42}
\begin{proposition}
For $S\neq S^2$ the Malcev Lie algebra of $C_n^G(S)$ determines entirely the minimal and $1$-minimal model: the cdga $A_{C_n^G(S)}$ of proposition \ref{prop CCE model} is a minimal and $1$-minimal model. The cdga $A_{C_n^G(S^2)}$ is a $1$-minimal model of $C_n^G(S^2)$.
\end{proposition}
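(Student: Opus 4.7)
The proposition is essentially a direct application of Proposition \ref{prop CCE model} combined with the results already established for $C_n^G(S)$. The plan is as follows.

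First, I would verify the hypotheses of Proposition \ref{prop CCE model}. The space $C_n^G(S)$ is an open submanifold of $S^n$ and is therefore a path connected CW complex (connectedness follows from the fibration theorem and an induction using that the fibers $S \setminus O_k(P)$ are path connected). The finite dimensionality of $H^1(C_n^G(S),\Q)$ is a consequence of Proposition \ref{prop betti}. Thus the cdga $A_{C_n^G(S)} = C^*_{\gr}(\mathcal{L}_{C_n^G(S)})$ from Proposition \ref{prop CCE model} is defined and yields a $1$-minimal model of $C_n^G(S)$ in both cases.

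For $S$ not homeomorphic to $S^2$, I would invoke Proposition \ref{prop RKPS}, which shows that $C_n^G(S)$ is a rational $K(\pi,1)$. The second part of Proposition \ref{prop CCE model} then immediately upgrades the $1$-minimal model $A_{C_n^G(S)}$ to the full minimal model, so the Malcev Lie algebra $L_{C_n^G(S)}$ determines both models.

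For $S = S^2$, the space $C_n^G(S^2)$ fails to be a rational $K(\pi,1)$ by Proposition \ref{prop models S2}, so we can only conclude that $A_{C_n^G(S^2)}$ is the $1$-minimal model, not the full minimal model. The main (and only) subtle point is ensuring that the hypothesis on $L_{C_n^G(S)}$ in Proposition \ref{prop CCE model}, namely that it is the degree completion of a graded Lie algebra generated by its finite dimensional degree $1$ component, is actually being used here implicitly through the construction of $A_{C_n^G(S)}$; this is the filtered-formality property discussed in Remark \ref{rmk CCE}. Once this is noted, the proof reduces to citing Propositions \ref{prop CCE model}, \ref{prop betti}, \ref{prop RKPS} and \ref{prop models S2} in the appropriate order.
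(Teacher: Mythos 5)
Your proof follows the same route as the paper: check the standing hypotheses, then for $S\neq S^2$ combine the rational $K(\pi,1)$ property (Proposition \ref{prop RKPS}) with Proposition \ref{prop CCE model}, and for $S\simeq S^2$ retain only the $1$-minimal model conclusion. The paper's own proof is exactly this two-line reduction, and your added verification that $C_n^G(S)$ is a path connected CW complex with finite first Betti number (Proposition \ref{prop betti}) is a harmless expansion.

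The one place you go astray is the identification $A_{C_n^G(S)}=C^*_{\gr}(\mathcal{L}_{C_n^G(S)})$ and the ensuing worry about filtered-formality. The cdga $A_X$ named in the statement is the inverse limit $\varinjlim C^*(L_X/\Gamma_i L_X)$, which is a $1$-minimal model for \emph{any} path connected CW complex $X$ with $H^1(X,\Q)$ finite dimensional; no assumption that $L_X$ is the degree completion of a graded Lie algebra generated in degree $1$ is needed for that construction. (The $C^*_{\gr}(\mathcal{L}_X)$ variant, which does need that hypothesis, is only invoked later for main result $(6)$, where filtered-formality is known from \cite{MM} and \cite{MM3}.) This matters: if the present proposition really did rest on the $C_{\gr}$ version, your proof would have a genuine gap, since you only assert that the filtered-formality hypothesis ``is being used implicitly'' without establishing it, and the paper does not establish filtered-formality of $C_n^G(S)$ for arbitrary $S$ and $G$ (e.g.\ punctured higher-genus surfaces with a general free finite action). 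Dropping that identification and citing the inverse-limit construction directly makes your argument complete and coincides with the paper's.
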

\begin{proof}
We have shown in subsection \ref{S31} that $C_n^G(S)$ is a rational $K(\pi,1)$ space if and only if $S\neq S^2$. Hence, the proposition follows from proposition \ref{prop CCE model}.
\end{proof}
We have proved the part concerning Malcev Lie algebras of $(3)$.\\\\
Let $G$ be a finite subgroup of $\PGL(\C^2)$ acting (naturally) by homography on $\PP^1\simeq S^2$ (the complex projective line) and $\PP^1_*$ be the set of points with trivial stabilizers with respect to the action of $G$. We have shown in \cite{MM}, that the Malcev Lie algebra of $\pi_1 C_n^G(\PP^1_*)$ is the degree completion of a graded Lie algebra $\mathfrak{p}_n(G)$ defined by generators and relations. We note that $\PP_*^1\neq \PP^1$ if and only if $G\neq \{1\}$ (see for instance the classification of the spaces $C_n^H(S^2)$ in subsection \ref{S21})
\begin{proposition}
Take $G$ and $\PP^1_*$ as in the previous paragraph. The cdga $C_{\mathrm{gr}}^*(\mathfrak{p}_n(G))$ is the $1$-minimal model of $C_n^G(\PP^1_*)$. It is also the minimal model if $G\neq \{1\}$.
\end{proposition}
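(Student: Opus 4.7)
The plan is to invoke Proposition \ref{prop CCE model} with $X=C_n^G(\PP^1_*)$ and $\mathcal{L}_X=\mathfrak{p}_n(G)$; the two assertions of the statement will correspond respectively to the two conclusions of that proposition (1-minimal model in general, and minimal model under the rational $K(\pi,1)$ hypothesis).

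First I would verify the hypotheses of Proposition \ref{prop CCE model}. The space $C_n^G(\PP^1_*)$ is an open submanifold of a compact manifold, hence has the homotopy type of a CW complex, and $H^1(C_n^G(\PP^1_*),\Q)$ is finite dimensional by Proposition \ref{prop betti}. The main result of \cite{MM} provides an isomorphism of filtered Lie algebras between the Malcev Lie algebra $L_{C_n^G(\PP^1_*)}$ and the degree completion of the graded Lie algebra $\mathfrak{p}_n(G)$; from the presentation of $\mathfrak{p}_n(G)$ by generators and relations in loc.\ cit., the degree one component is finite dimensional and generates the whole Lie algebra. The first conclusion of Proposition \ref{prop CCE model} then yields at once that $C_{\mathrm{gr}}^*(\mathfrak{p}_n(G))$ is a $1$-minimal model of $C_n^G(\PP^1_*)$.

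For the minimal model statement, it suffices to show that $C_n^G(\PP^1_*)$ is a rational $K(\pi,1)$ when $G\neq\{1\}$. Points of $\PP^1$ with non-trivial stabilizer under $G$ are fixed points of non-identity elements of $G\subset \PGL(\C^2)$; since every non-identity Möbius transformation has at most two fixed points on $\PP^1$, the complement $\PP^1\setminus \PP^1_*$ is finite, and it is non-empty as soon as $G\neq\{1\}$. Consequently $\PP^1_*$ is a cofinite subset of $\PP^1\simeq S^2$ which is not homeomorphic to $S^2$, so by Proposition \ref{prop RKPS} the space $C_n^G(\PP^1_*)$ is a rational $K(\pi,1)$. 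The second half of Proposition \ref{prop CCE model} then upgrades the $1$-minimal model to the minimal model, as claimed.

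There is no genuine obstacle: the argument is a clean assembly of Proposition \ref{prop CCE model}, the Malcev Lie algebra computation of \cite{MM}, and the rational $K(\pi,1)$ property established in Section \ref{S31}. The only item that deserves an explicit check is that the presentation of $\mathfrak{p}_n(G)$ given in \cite{MM} indeed exhibits it as generated in degree one with finitely many degree one generators, and that the isomorphism with $L_{C_n^G(\PP^1_*)}$ proved in loc.\ cit.\ respects the completion structure used in Proposition \ref{prop CCE model}; both points follow directly from the statements in \cite{MM} and require no additional work.
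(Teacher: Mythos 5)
Your proposal is correct and follows essentially the same route as the paper: both apply Proposition \ref{prop CCE model} to $X=C_n^G(\PP^1_*)$ with $\mathcal{L}_X=\mathfrak{p}_n(G)$, using the Malcev Lie algebra computation of \cite{MM}, and both reduce the minimal-model claim to the observation that $\PP^1_*\neq\PP^1$ exactly when $G\neq\{1\}$, so that the rational $K(\pi,1)$ property from Section \ref{S3} applies. The only difference is cosmetic: you justify $\PP^1_*\varsubsetneq\PP^1$ via fixed points of Möbius transformations, while the paper cites its classification in Subsection \ref{S21}.
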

\begin{proof}
The proposition is an application of proposition \ref{prop CCE model}, since $\PP^1_*\neq \PP^1$ if and only if $G\neq \{1\}$ and hence $C_n^G(\PP^1_*)$ is a rational $K(\pi,1)$ if and only if $G\neq \{1\}$ by results of section \ref{S3}.
\end{proof}
We have proved main result $(6)$ of the introduction.\\\\
For $S=\C$, one can prove that $L_{C_n(\C)}$ is the degree completion of the Kohno-Drinfeld Lie algebra $\mathfrak{t}_n$ and hence proposition \ref{prop CCE model} also applies to the rational $K(\pi,1)$ space $C_n(\C)$. In general, for $\bar{S}=S^2$, $C_n^G(S)$ is formal, for $G$ orientation preserving \cite{MM3}, and proposition \ref{prop CCE model} applies (Cf. remark \ref{rmk CCE}). Malcev Lie algebras associated to configuration spaces (the case $G=\{1\}$) of orientable closed surfaces are degree completions of some known graded Lie algebras (Cf. \cite{Bezr}, \cite{BE3}, for genus $g\geq 1$), and in \cite{Bezr} $C_{\mathrm{gr}}(-)$ is used to descibe the minimal model of these configuration spaces. In \cite{CalMart} (section 5), it is proved that the Malcev Lie algebra of $\pi_1C_n^{E_p}(E)$, where $E_p$ is the $p$-torsion subgroup of and elliptic curve $E$ acting by translation on $E$, is the degree completion of some graded Lie algebra definied by generators and relations and hence proposition \ref{prop CCE model} also applies to the graded Lie algebras.
\subsection{On Malcev Lie algebras of orbit configuration spaces of surfaces}\label{S43}
As examples we give the Malcev Lie algebras of $S=C_1^G(S)$ for $S$ as in the general case (i.e. $S$ cofinite in a closed orientable surface):
\begin{proposition}\label{prop Malcev el}
For $Z$ a tuple, we denote by $\mathrm{Lie}(Z)$ the (graded) free Lie algebra on $Z$ where the components of $Z$ are of degree $1$, and with the convention $L(Z)=0$ if $Z=\emptyset$.
\begin{itemize}
\item[1)] If $S$ is not compact, then $L_S$ is the degree completion of $\mathrm{Lie}(x_1,\dots,x_l)$, where $l=\mathrm{dim}(H^1(S,\Q))-1$.
\item[2)] If $S$ is compact of genus $g\geq 0$, then $L_S$ is the degree completion of $$\mathrm{Lie}(a_1,b_1,\dots,a_g,b_g)/I,$$ where $g$ is the genus of $S$, $I$ is the ideal generated by $[a_1,b_1]+\cdots +[a_g,b_g]$, and $a_i,b_i$ have degree $1$.
\end{itemize}
\end{proposition}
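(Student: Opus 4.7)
The two assertions are classical in nature, and the unified strategy is to combine proposition \ref{prop HLie Mal} (identifying $L_S$ with the fundamental Lie algebra of the $1$-minimal model of $S$) with a direct description of this $1$-minimal model via either the homotopy type or the cohomology ring of $S$. Proposition \ref{prop SRK} already ensures that $S$ is a rational $K(\pi,1)$ when $S \not\simeq S^2$, so its minimal model and $1$-minimal model coincide, and in both cases formality of $S$ will let me read off $L_S$ from the cup-product structure.

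For $(1)$, I would exploit that a connected non-compact surface without boundary deformation retracts onto a $1$-dimensional CW complex, which must be a wedge of circles (or a point when $S \simeq \R^2$). The number of circles is determined by the rank of $H^1(S,\Q)$, and $\pi_1(S)$ is therefore free of the relevant rank $l$. I would then invoke the classical Magnus--Witt/Quillen computation: the Malcev Lie algebra of a free group of rank $l$ is the degree completion of the free graded Lie algebra on $l$ degree-$1$ generators. As an internally self-contained alternative, I would use proposition \ref{prop CCE model} (the graded version): the cohomology ring of a bouquet of circles is concentrated in degrees $0$ and $1$ with vanishing cup product, so the Koszul dual quadratic Lie algebra has no relations and is precisely $\mathrm{Lie}(x_1,\dots,x_l)$; since $S$ is formal ($1$-dimensional hence automatically) and a rational $K(\pi,1)$, the Cartan--Chevalley--Eilenberg cdga $C^*_{\mathrm{gr}}(\mathrm{Lie}(x_1,\dots,x_l))$ is a minimal model of $S$, whence $L_S$ is the claimed completion.

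For $(2)$ with $g=0$, the space $S=S^2$ is simply connected, so $L_S = 0$, matching the stated formula with the convention $\mathrm{Lie}(\emptyset)=0$. For $g\geq 1$, the surface $S_g$ is closed, orientable, aspheric and formal, and its cohomology ring $H^*(S_g,\Q)$ is well-known: $H^1$ is spanned by classes $\alpha_1,\beta_1,\dots,\alpha_g,\beta_g$ subject only to the symplectic relation coming from $H^1\otimes H^1\to H^2$. This cohomology ring is Koszul (for instance by the PBW-basis criterion of \cite{Prid} referenced in proposition \ref{prop SRK}, or via direct inspection). Taking the Koszul dual quadratic Lie algebra yields exactly $\mathrm{Lie}(a_1,b_1,\dots,a_g,b_g)/I$ with $I$ generated by $[a_1,b_1]+\cdots+[a_g,b_g]$, the dual relation to the cup product. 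Formality of $S_g$ implies $1$-formality, hence $L_{S_g}$ is the degree completion of its associated graded; combined with the Koszul duality computation of this associated graded, this yields the stated presentation.

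\textbf{Main obstacle.} The only non-routine step is the identification, in case $(2)$, of the associated graded of $L_{S_g}$ with the Koszul dual Lie algebra. This is essentially the classical theorem of Labute and Kohno. Avoiding a direct appeal to it, one may argue internally: by proposition \ref{prop CCE model} (graded version) it suffices to verify that $C^*_{\mathrm{gr}}$ of the claimed Lie algebra is a valid $1$-minimal model of $S_g$; since both sides are minimal Sullivan algebras in degree $1$ with finite-dimensional $H^1$, this reduces to checking that the induced cohomology algebra in degrees $\leq 2$ matches $H^*(S_g,\Q)$, which is a short calculation from the defining relation.
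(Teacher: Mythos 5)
The paper offers no proof of this proposition at all: it is introduced with "As examples we give the Malcev Lie algebras of $S=C_1^G(S)$" and the statements are left as classical facts (Magnus--Quillen for free groups, Labute--Kohno together with formality for closed surface groups). So there is no argument in the paper to compare yours against; what you have written is a reasonable reconstruction of the standard proofs, and your overall strategy --- identify $L_S$ with the fundamental Lie algebra of the $1$-minimal model via proposition \ref{prop HLie Mal}, then produce that model from the cohomology ring using formality and the Cartan--Chevalley--Eilenberg construction --- is consistent with the machinery the paper develops in section \ref{S4}. Two caveats. First, in case $(1)$ your argument actually yields $l=\dim H^1(S,\Q)$ (the number of circles in the wedge onto which the non-compact $S$ retracts, i.e.\ the rank of the free group $\pi_1(S)$), not the $\dim H^1(S,\Q)-1$ of the statement; the stated formula cannot be right as written (test it on $S\simeq\R^2$, where it gives $l=-1$, or on $S\simeq\C^\times$, where it gives $l=0$ while $L_S\simeq\Q$) and is presumably a typo for $\dim H^*(S,\Q)-1$. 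You should say explicitly which count you are proving instead of writing "the relevant rank $l$", since as it stands your proof silently establishes a statement different from the literal one. Second, in case $(2)$ the closing claim that the internal verification "reduces to checking that the induced cohomology algebra in degrees $\leq 2$ matches, which is a short calculation from the defining relation" understates the point: to get $H^2(m_1)$ injective you must know that $H^2$ of $C^*_{\mathrm{gr}}$ of the surface Lie algebra is at most one-dimensional, i.e.\ that this CCE cohomology has no contribution in weights $\geq 3$; that is precisely the content of Koszulity of the surface Lie algebra (equivalently, of the Labute computation), so the proposed "internal alternative" is not actually independent of the classical input you name as the main obstacle. With that input granted, and the off-by-one in $(1)$ repaired, the proof is correct.
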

Recall that for $k<n$ $C_{n-k}^G(S_{g,k})$ is the fiber of $\pi{n,k}:C_{n}^G(S)\to C_k^G(S)$.
\begin{proposition}
Take $n>k\geq 1$. 
\begin{itemize}
\item[1)] There is an exact sequence of Malcev Lie algebras:
$$ 0 \to \mathrm{L}_{C_{n-k}^G(S_{G,k})} \to \mathrm{L}_{C_{n}^G(S)} \to \mathrm{L}_{C_{k}^G(S)} \to 0,$$
in the following cases:
 \begin{itemize} 
\item $S$ not homeomorphic to $S^2$, 
\item$S\simeq S^2$, $G=\{1\}$ and $k\geq 3$, 
\item $S\simeq S^2$, $G\neq \{1\}$ and $k\geq 2$.
\end{itemize}
\item[2)] If $S\simeq S^2$, then $L_{C_{1}^G(S)}=L_{C_{2}^G(S)}=L_{C_{3}(S)}=0$.
\end{itemize}
\end{proposition}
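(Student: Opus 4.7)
For part $(2)$, each of the three spaces is either simply connected or has finite fundamental group. Indeed $C_1^G(S^2)=S^2$, the fibration $C_2(S^2)\to S^2$ has the contractible fiber $\R^2$ so $C_2(S^2)\simeq S^2$, and in subsection \ref{S23} we established that both $C_2^{\Z/2\Z}(S^2)$ and $C_3(S^2)$ are homotopy equivalent to $\PP^3(\R)$. The fundamental groups are thus trivial or equal to $\Z/2\Z$, and since the Malcev $\Q$-completion of a finite group vanishes, the claim follows.

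For part $(1)$, my plan is to extract the exact sequence directly from the cdga sequence (\ref{eq model Cn+1}). Under the listed hypotheses, main result $(2)$ ensures that $A_{n,k}=\Lambda V_{C_k^G(S)}\otimes\Lambda V_{C_{n-k}^G(S_{G,k})}$ is a minimal Sullivan model of $C_n^G(S)$, so by remark \ref{rmk min 1min} its $1$-minimal model is the sub-cdga generated by
$$V_1 \;:=\; (V_{C_k^G(S)})_1\oplus (V_{C_{n-k}^G(S_{G,k})})_1.$$
The morphisms in (\ref{eq model Cn+1}) restrict to the $1$-minimal models and yield a split short exact sequence of degree-$1$ generating subspaces compatible with the (quadratic) differentials. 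Dualizing and using proposition \ref{prop HLie Mal} to identify fundamental Lie algebras with Malcev Lie algebras (its hypothesis is met because $C_n^G(S)$ has finite Betti numbers by proposition \ref{prop betti}) should produce the claimed Lie algebra sequence. A Chevalley--Eilenberg-style computation, exploiting that $\Lambda V_{C_k^G(S)}\hookrightarrow A_{n,k}$ is a sub-cdga and that applying the augmentation of $\Lambda V_{C_k^G(S)}$ projects the differential onto $\Lambda V_{C_{n-k}^G(S_{G,k})}$, then confirms that both arrows in the dual sequence are Lie algebra homomorphisms.

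The main work is to upgrade this abstract exact sequence to one of Malcev Lie algebras, i.e.\ of complete filtered Lie algebras with respect to the lower central series. The plan is to choose a Sullivan filtration of $A_{n,k}$ which extends one of $\Lambda V_{C_k^G(S)}$ and descends to one of $\Lambda V_{C_{n-k}^G(S_{G,k})}$; this is possible because $A_{n,k}$ is a relative Sullivan algebra over $\Lambda V_{C_k^G(S)}$. Such compatible filtrations induce compatible filtrations on the three fundamental Lie algebras, and proposition \ref{prop HLie Mal} identifies them with the lower central series. The expected obstacle is verifying strictness of these filtrations under the arrows of the sequence, which is precisely what guarantees exactness in the Malcev category and in particular surjectivity (rather than just dense image) of $L_{C_n^G(S)}\to L_{C_k^G(S)}$.
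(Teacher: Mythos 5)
Your proof of part (1) follows the same route as the paper: read off the split short exact sequence of generating spaces from the minimal cdga sequence (\ref{eq model Cn+1}), restrict to degree $1$, dualize to get an exact sequence of fundamental Lie algebras (the checks that the two arrows dualize to Lie morphisms and that the kernel carries the correct induced bracket are exactly the paper's ``observation'' that a morphism of minimal Sullivan algebras induces, by restriction to degree $1$ and dualization, a morphism of fundamental Lie algebras), and then apply proposition \ref{prop HLie Mal} together with proposition \ref{prop betti}. One remark: the extra step you announce as ``the main work'' --- building compatible Sullivan filtrations and checking strictness --- is not carried out in the paper and is not needed in its framework: proposition \ref{prop HLie Mal} already identifies each Malcev filtration with the lower central series, which every Lie algebra morphism preserves, so the paper simply records exactness of the underlying Lie algebras; you should either adopt that convention or actually do the strictness check rather than leave it as an announced obstacle. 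For part (2) your argument genuinely differs from the paper's: you use that $C_1^G(S^2)$ and $C_2(S^2)$ are simply connected and that $C_3(S^2)$ and $C_2^{\Z/2\Z}(S^2)$ have fundamental group $\Z/2\Z$, together with the vanishing of the Malcev Lie algebra of a finite group; the paper instead observes via proposition \ref{prop models S2} that the $\psi$-homotopy spaces of these four spaces have no degree-$1$ component, so the fundamental Lie algebra $V_1^\vee$ is zero. Both are correct; yours is more directly topological, while the paper's stays inside the minimal-model formalism it has already set up and avoids quoting the (true, but not proved in the text) fact that finite groups have trivial Malcev completion.
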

\begin{proof}
A morphism $(\Lambda V,d_V) \to (\Lambda W,d_W)$ of minimal Sullivan algebras induces naturally (by restricting to degree $1$ then dualizing) a morphism of Lie algebras $L_{(\Lambda W,d_W)} \to L_{\Lambda (V,d_V)}$. We have seen (section \ref{S3}) that in the cases mentioned in $(1)$, that $C_{n}^G(S)$ have a minimal model $A_{n,k}$ fitting in a sequence of minimal Sullivan algebras:
$$\Lambda V_{C_k^G(S)}\to A_{n,k}=\Lambda V_{C_k^G(S)}\otimes \Lambda V_{C_{n-k}^G(S_{G,k})} \to \Lambda V_{C_{n-k}^G(S_{G,k})},$$
where the map on the left is $\mathrm{id} \otimes 1$ and the one on the right is obtained by applying the augmentation map of $\Lambda V_{C_k^G(S)}$. The above sequence induces an exact sequence: $$0\to V_{C_k^G(S)}\to V_{C_k^G(S)}\oplus V_{C_{n-k}^G(S_{G,k})} \to V_{C_{n-k}^G(S_{G,k})}\to 0,$$ of graded vector spaces. This with the observation made at the beginning of the proof, shows that we have an exact sequence of Lie algebras: $$0\to L_{\Lambda V_{C_{n-k}^G(S_{G,k})}}\to L_{\Lambda V_{C_{n}^G(S)}}\to L_{C_k^G(S)} \to 0,$$
where we omit the differentials in the subscripts. We obtain the exact sequence of the proposition by applying the isomorphism $L_{\Lambda V_X}\simeq L_X$ (Cf. proposition \ref{prop HLie Mal}). We prove $(2)$. We have $\pi_\psi^*(C_1^G(S^2))=V_{S^2}$,  $\pi_\psi^*(C_3(S^2))=\pi_\psi^*(C_2^{\Z/2\Z}(S^2))=(\Q)_3$  by $(2)$ of proposition \ref{prop models S2}, and $\pi_\psi^*(C_2(S^2))=V_{S^2}\oplus V_{\R^2}$ by $(1.a)$ of the same proposition. The vector space $V_{\R^2}$ is null and $V_{S^2}$ (as for $(\Q)_3$) has no degree $1$ part (as seen previously). This proves the second claim of the proposition. We have proved the proposition.
\end{proof}
By a complete free Lie algebra on $l\geq 0$ generators, we mean the degree completion of the free Lie algebra on $l$ generators $x_1,\dots,x_l$, where $x_i$ is of degree $1$ (for $l=0$ the Lie algebra is $0$).
\begin{corollary}
The Malcev Lie algbera of $C_{n}^G(S)$ is obtained by iterated extensions by free Lie algebras of $L_S$, and $L_S$ is one of the Lie algebras in \ref{prop Malcev el}.
\end{corollary}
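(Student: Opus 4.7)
The plan is to argue by induction on $n$, using the exact sequence of Malcev Lie algebras from part $(1)$ of the previous proposition with $k=n-1$, and to isolate the base cases where that sequence is not available.

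For the base case $n=1$ we have $C_1^G(S)=S$, so $L_{C_1^G(S)}=L_S$ and the conclusion holds with zero iterated extensions. For the inductive step, assume $n\geq 2$ and that the claim is established for $C_{n-1}^G(S)$. We apply the previous proposition with $k=n-1$. The fiber is $C_1^G(S_{G,n-1})=S_{G,n-1}$, i.e.\ the surface $S$ with the $n-1$ orbits of some base configuration removed; in particular $S_{G,n-1}$ is non-compact (at least one orbit has been removed), so by case $(1)$ of Proposition~\ref{prop Malcev el} its Malcev Lie algebra $L_{S_{G,n-1}}$ is a complete free Lie algebra. Whenever part $(1)$ of the previous proposition applies (which is the case for $S\not\simeq S^2$, and also for $S\simeq S^2$ once $n-1\geq 3$ in the classical case or $n-1\geq 2$ in the antipodal case), the exact sequence
\[0\to L_{S_{G,n-1}}\to L_{C_n^G(S)}\to L_{C_{n-1}^G(S)}\to 0\]
realizes $L_{C_n^G(S)}$ as an extension of $L_{C_{n-1}^G(S)}$ by the complete free Lie algebra $L_{S_{G,n-1}}$. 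Combining this with the inductive hypothesis applied to $L_{C_{n-1}^G(S)}$ yields the desired presentation of $L_{C_n^G(S)}$ as an iterated extension of $L_S$ by complete free Lie algebras.

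It remains to handle the small-$n$ exceptions for $S\simeq S^2$ not covered by the exact sequence, namely $n\leq 3$ with $G=\{1\}$ and $n\leq 2$ with $G\neq\{1\}$. In each of these cases part $(2)$ of the previous proposition gives $L_{C_n^G(S)}=0$, while $L_{S}=L_{S^2}=0$ (since $S^2$ is simply connected, the Malcev Lie algebra is trivial, as one can also read off from the minimal model described in Proposition~\ref{prop model S2}). Hence the claim holds trivially in these cases, with the ``iterated extensions'' being empty or by the zero free Lie algebra.

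The main obstacle I anticipate is purely bookkeeping: making sure the induction can be started inside the exceptional regime for $S\simeq S^2$ so that when the exact sequence first becomes available (at $n=4$ for $G=\{1\}$, at $n=3$ for $G\neq\{1\}$), the inductive hypothesis on $L_{C_{n-1}^G(S)}$ is of the required form. This is handled by the direct identification $L_{C_n^G(S^2)}=0=L_{S^2}$ for small $n$, which trivially has the required shape and makes the induction go through. Finally, the assertion that $L_S$ is one of the Lie algebras listed in Proposition~\ref{prop Malcev el} is exactly the content of that proposition, applied to $S$.
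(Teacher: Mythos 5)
Your proof is correct and follows exactly the argument the paper intends (the corollary is stated without an explicit proof, immediately after the proposition giving the exact sequences): induction on $n$ via the sequence with $k=n-1$, identifying the fiber's Malcev Lie algebra $L_{S_{G,n-1}}$ as a complete free Lie algebra since $S_{G,n-1}$ is non-compact, and disposing of the exceptional small-$n$ cases for $S\simeq S^2$ by the vanishing $L_{C_n^G(S^2)}=0=L_{S^2}$ from part $(2)$ of the proposition. Your bookkeeping of where the exact sequence first becomes available ($n=4$ for $G=\{1\}$, $n=3$ for $G\neq\{1\}$) and the use of the $l=0$ convention for free Lie algebras are both consistent with the paper's conventions.
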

We have proved main result $(5)$. 
\section{Koszulity in genus zero}\label{S5}
In this section, we assume that $\bar{S}=S^2$ and that $G$ is a finite group acting freely by orientation preserving homeomorphisms on $S$. We have proved, in \cite{MM3}, that $C_n^G(S)$ is formal and computed its cohomology ring. It follows from the classification of orbit configuration spaces on $S^2$, in subsection \ref{S22}, that if $S=S^2$ then $G=\{1\}$. In the other cases ($S\neq S^2$), $C_n^G(S)$ is a rational $K(\pi,1)$ by results of section \ref{S3}. As a consequence of theorem \ref{Koszul KP1} (taken from \cite{PapYuz}), we get:
\begin{proposition}
The cohomology ring of $C_n^G(S)$, with $\bar{S}=S^2$ and $G$ orientation preserving, is Koszul if and only if $(G,S)\neq (\{1\},S^2)$.
\end{proposition}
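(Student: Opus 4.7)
The plan is to apply Theorem \ref{Koszul KP1} (from \cite{PapYuz}) directly. That theorem states that a formal topological space $X$ with finite Betti numbers is a rational $K(\pi,1)$ if and only if its cohomology ring $H^*(X,\Q)$ is Koszul. Thus the proposition will reduce to establishing formality, finiteness of Betti numbers, and identifying when $C_n^G(S)$ is a rational $K(\pi,1)$ under the standing hypotheses $\bar{S}=S^2$ and $G$ acting by orientation preserving homeomorphisms.

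First, I would invoke the classification of such orbit configuration spaces recalled in subsection \ref{S21}: since $G$ preserves the orientation of $S^2$ and acts freely, either $S = S^2$ and necessarily $G=\{1\}$, or $S \varsubsetneq S^2$. Finiteness of Betti numbers of $C_n^G(S)$ is already established in Proposition \ref{prop betti}. Formality is the result proved in \cite{MM3} for exactly this family of spaces, so it can be cited directly. Together with Theorem \ref{Koszul KP1} this reduces Koszulity of $H^*(C_n^G(S),\Q)$ to the question whether $C_n^G(S)$ is a rational $K(\pi,1)$.

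To finish, I would split according to the classification. In the case $(G,S)\neq(\{1\},S^2)$ we have $S\varsubsetneq S^2$, hence $S$ is not homeomorphic to $S^2$, and main result $(3)$ (proved via Proposition \ref{prop RKPS} in subsection \ref{S31}) gives that $C_n^G(S)$ is a rational $K(\pi,1)$, so $H^*(C_n^G(S),\Q)$ is Koszul. Conversely, in the remaining case $(G,S)=(\{1\},S^2)$, Proposition \ref{prop models S2} computes $\pi_\psi^{\geq 2}(C_n(S^2))$ to contain $(\Q)_3$ (and for $n\leq 2$ also $(\Q)_2$), so $C_n(S^2)$ is \emph{not} a rational $K(\pi,1)$, and hence $H^*(C_n(S^2),\Q)$ is not Koszul by Theorem \ref{Koszul KP1}. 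This is a clean two-line argument; there is no real obstacle beyond simply assembling the three ingredients formality, finite Betti numbers, and rational $K(\pi,1)$. The only mild subtlety worth flagging is to make sure that the nontrivial $\psi$-homotopy in degree $\geq 2$ for $C_n(S^2)$ persists for all $n\geq 1$, which is exactly what $(3)$ of Proposition \ref{prop models S2} records.
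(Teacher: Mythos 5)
Your proposal is correct and follows essentially the same route as the paper: formality from \cite{MM3}, finiteness of Betti numbers, the classification forcing $G=\{1\}$ when $S=S^2$, the rational $K(\pi,1)$ property (or its failure) from Section \ref{S3}, and Theorem \ref{Koszul KP1} to convert this into Koszulity. The paper only additionally remarks that an alternative proof via a PBW basis argument from \cite{Prid} is possible, which you are not required to supply.
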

We note that in \cite{MM3}, relations and generators defining the cohomology ring of $C_n^G(S)$ (as above) are given. A basis is also given. One can prove that the basis is a PBW basis in the sense of \cite{Prid} and hence obtains another proof of Koszulity for $(G,S)\neq (\{1\},S^2)$  by theorem 5.3 of \cite{Prid}. \\\\
As another application of the rational $K(\pi,1)$ property and formality we get (by corollary 5.3 of \cite{PapYuz}) an LCS formula for $(G,S)\neq (\{1\},S^2)$:
$$P_{C_n^G(S)}(-t)=\prod_{i\geq 1} (1-t^i)^{\phi_i(\pi_1C_n^G(S))},$$
where $P_{C_n^G(S)}$ is the Poincaré polynomial of $C_n^G(S)$ and $\phi_i(\pi_1C_n^G(S))$ is the rank of the quotient $\Gamma_i \pi_1C_n^G(S)/\Gamma_{i+1} \pi_1C_n^G(S)$, with $\{\Gamma_i \pi_1 C_n^G(S)\}_{i\geq 1}$ the lower central series of $\pi_1C_n^G(S)$. This was established using other arguments in \cite{MM3} and the Poincaré polynomial wase computed. A similar formula for $(G,S)= (\{1\},S^2)$ is also given. If $C_n^G(S)$ correspond to one the spaces $C_n^G(\PP^1_*)$ mentioned in subsection \ref{S43}, then the constants $\phi_i(\pi_1C_n^G(\PP^1_*))$ correspond to the dimension of the degree $i$ component of $\mathfrak{p}_n(C_n^G(\PP^1_*))$ mentioned in subsection \ref{S43}.
 
\section{Appendix}
Here we prove proposition \ref{prop HLie Mal} of section \ref{S41}. For that we use a correspondence between the constructions of "Malcev completions" from \cite{Q1} and \cite{FelHar2} (Cf. remark \ref{rmk correspondance} of \cite{FelHar2}).\\\\
For $A$ a group or a Lie algebra, we mean by a filtration a sequence of subgroups or Lie subalgebras $A=F_1 A \supset F_2 A \cdots \supset \cdots$ such that $F_iA\star F_j A \subset F_{i+j} A$, where $F_ iA \star F_j$ is either the subgroup generated by the commutators $aba^{-1}b^{-1}$ with $a \in F_i A$ and $b\in F_j A$, either the Lie subalgebra $[F_iA, F_j A]$. A Lie algebra or a group is called filtered if it is equipped with a filtration. For instance, the lower central series $\{\Gamma_i A\}_i$ given by $\Gamma_1 A=A$ and $\Gamma_{i+1}A=\Gamma_{i}A\star A$, with $\star$ as before is a filtration. Moreover, it is the thinnest filtration, i.e. if $\{F_i A\}_i$ is a filtration then $\Gamma_i A \subset F_i A$ for $ii\geq 1$. If $A$ and $A'$ are filtered (groups or Lie algebras) a filtered morphism $f:A\to A'$ is a morphism (of groups or Lie algebras) such that $f(F_i A) \subset f(F_iA')$. A filtered isomorphism is a filtered morphism admitting a filtered inverse. The associated graded of a filtered $A$ with filtration $\{F_iA\}_i$ is the graded abelian group or vector space $\mathrm{gr} A=\oplus_{i\geq 1} F_i A/F_{i+1}A$. If $f:A\to A'$ is a filtered morphism then $f$ induces a morphism $\mathrm{gr}(f): \mathrm{gr}A \to \mathrm{gr} A'$. \\\\
We recall some elements from \cite{Q1}. The author introduces the category of Malcev Lie algebras ($\mathrm{MLA}$) and Malcev groups ($\mathrm{MGp}$). The objects of these categories are filtered Lie algebras or filtered groups, with some extra conditions, and morphisms are filtered morphisms. He also introduces the category of complete Hopf algebras ($\mathrm{CHA}$), and two functors $\mathcal{G}_F:\mathrm{CHA} \to \mathrm{MGp}$ and $\mathcal{P}_F:\mathrm{CHA} \to \mathrm{MLA}$. The first functor associates to an object of CHA the group of group-like elements equipped with a given filtration, while the second associate the Lie algebra of primitive elements equipped with a given filtration. Let $G$ be a group, $\Q[G]$ the group algebra and $I$ the augmentation ideal of $\Q[G]$. The $I$-adic completion $\Q[G]\:\hat{}:=\varprojlim \Q[G]/I^i$ of $\Q[G]$ is a complete Hopf algebra. The Malcev Lie algebra of $G$ is the object $\mathcal{P}_F( \Q[G]\:\hat{})$. We will denote by $L(G)$ and $\{F_i L(G)\}_i$ the underlying algebra and filtration of $\mathcal{P}_F( \Q[G]\:\hat{})$. The Malcev group of $G$ is the object $\mathcal{P}_F( \Q[G]\:\hat{})$. We will use the notation $G_\Q$ for the group and $\{F_i G_\Q\}_i$ for the filtration. By construction, we have a natural group morphism $j_G: G\to G_\Q$, corresponding to the composite $G\to\Q[G]\to \Q[G]\:\hat{}$. If $G$ is equipped with the lower central series and $G_\Q$ with its "Malcev" filtration, then $j_G$ is a filtered morphism. The natural maps:
$$ L(G)\to \varprojlim L(G)/F_i L(G) \quad \text{and} \quad G_\Q\to \varprojlim G_Q/F_i G_\Q,$$
are isomorphisms and the natural map $\mathrm{gr} G\otimes \Q \to \mathrm{gr}G_\Q$ is an isomorphism. We note that $\cap_ i F_i G_\Q=\{1\}$.
\begin{lemma}\label{lem grf}
Let $f: G\to G'$ be a map of filtered groups $G,G'$ with filtrations $\{F_iG\}_i,\{F_i G'\}_i$. Assume that $F_k G'=\{1\}$ for a given $k$ and that $\cap_ i F_i G=\{1\}$. The map $f$ is an isomorphism of filtered groups if and only if $\mathrm{gr}(f)$ is an isomorphism.
\end{lemma}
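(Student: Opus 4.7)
The "only if" direction is immediate: if $f$ is a filtered isomorphism with filtered inverse, then $f(F_iG)=F_iG'$ and so $f$ induces isomorphisms on each quotient $F_iG/F_{i+1}G\to F_iG'/F_{i+1}G'$, whence $\mathrm{gr}(f)$ is an isomorphism. The substance of the lemma lies in the converse, and I would proceed as follows.

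First, I would make a preliminary observation that reduces the proof to a nilpotent-type setting on both sides. Namely, $F_k G'=\{1\}$ forces $\mathrm{gr}_i G'=0$ for $i\geq k$, and since $\mathrm{gr}(f)$ is an isomorphism we get $\mathrm{gr}_i G=0$, i.e.\ $F_i G=F_{i+1}G$ for all $i\geq k$. Iterating gives $F_k G=\cap_{i\geq 1}F_iG=\{1\}$ by the hypothesis on $G$. Thus both $G$ and $G'$ have their filtrations terminating at step $k$.

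Next I would establish injectivity. Note that $F_iG'$ is normal in $G'$ because $G'\star F_iG'\subset F_{i+1}G'\subset F_iG'$, and similarly for $G$, so quotienting mod $F_iG'$ makes sense. Suppose $f(x)=1$ with $x\in G=F_1G$. Its class in $F_1G/F_2G=\mathrm{gr}_1G$ is killed by the isomorphism $\mathrm{gr}_1(f)$, so $x\in F_2G$. By induction on $i$: if $x\in F_iG$, then the class of $x$ in $\mathrm{gr}_iG$ maps to $0$ under $\mathrm{gr}_i(f)$, hence is $0$, so $x\in F_{i+1}G$. At $i=k$ we reach $x\in F_kG=\{1\}$.

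For surjectivity, I would build a preimage step by step. Given $y\in G'$, I construct inductively elements $x_i\in G$ with $f(x_i)^{-1}y\in F_iG'$, starting with $x_1=1$. At step $i$, letting $z_i=f(x_i)^{-1}y\in F_iG'$, surjectivity of $\mathrm{gr}_i(f)$ yields $w_i\in F_iG$ with $f(w_i)^{-1}z_i\in F_{i+1}G'$; set $x_{i+1}=x_iw_i$, which satisfies $f(x_{i+1})^{-1}y=f(w_i)^{-1}z_i\in F_{i+1}G'$. At $i=k$ we obtain $f(x_k)=y$. Finally, to see that the inverse is filtered, I would rerun the same construction with initialisation $x_i=1$ whenever $y\in F_iG'$; then each successive $w_j$ lies in $F_jG\subset F_iG$ for $j\geq i$, so the resulting preimage $x_k$ lies in $F_iG$, giving $f^{-1}(F_iG')\subset F_iG$.

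The only real subtlety is the non-abelian bookkeeping in the inductive construction (choosing the correct order of multiplication so that $f(x_{i+1})^{-1}y$ collapses to $f(w_i)^{-1}z_i$, and checking normality of the $F_iG'$ to legitimise the mod-$F_iG'$ reasoning); once that is in place the argument is mechanical and the hypothesis $F_kG'=\{1\}$ is what allows the inductive process to terminate in finitely many steps.
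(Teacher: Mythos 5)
Your proof is correct. Note that the paper gives no argument here at all --- it states the lemma and writes ``We leave the proof of the lemma for the reader'' --- so there is nothing to compare against; your successive-approximation argument (injectivity by descending through the filtration until $F_kG=\{1\}$, surjectivity by building a preimage one graded piece at a time, and the filtered-inverse property by re-initialising the recursion at level $i$) is exactly the standard way to fill this gap, and your preliminary observation that $\mathrm{gr}(f)$ being an isomorphism forces $F_kG=\cap_iF_iG=\{1\}$ is the one point that genuinely uses both hypotheses. The only bookkeeping worth double-checking, which you do handle, is that each $F_iG'$ is normal in $G'$ and each $\mathrm{gr}_iG'$ is a group (both follow from $F_iA\star F_jA\subset F_{i+j}A$), so that the mod-$F_{i+1}G'$ manipulations are legitimate.
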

We leave the proof of the lemma for the reader. The natural map $j_G$ induces a morphism $j_{G,i}:G/\Gamma_i G \to G_\Q/F_i G_\Q$, because $\Gamma_i G_\Q\subset F_iG_i\Q$.
\begin{proposition}
The filtration $\{F_i (G/\Gamma_iG)_\Q\}_{i\geq1}$ and the one induced by $\{F_i G_\Q\}_{i\geq 1}$ on $G_\Q/F_i G_\Q $ are the corresponding lower central series. There exists a group isomorphism $f_{G,i}: G_\Q/F_i G_\Q \to (G/\Gamma_iG)_\Q$, making the following diagram commutative:
\[\begin{tikzcd}
G/\Gamma_i G\arrow{r}{j_{G,i}} \arrow{rd}[swap]{j_{G/\Gamma_iG}}&G_\Q/F_i G_\Q \arrow{d}{f_{G,i}}\\
& (G/\Gamma_iG)_\Q
\end{tikzcd}\]
In particular, the map $\mathrm{gr} G/ \Gamma_i G \otimes \Q \to \mathrm{gr} G_\Q/F_i G_\Q$, induced by $j_{G,i}$, is an isomorphism.
\end{proposition}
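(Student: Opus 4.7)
The plan is, first, to establish the filtration claims—namely, that the Malcev filtration on $(G/\Gamma_iG)_\Q$ and the filtration induced on $G_\Q/F_iG_\Q$ both coincide with the lower central series of those quotients—and then to construct $f_{G,i}$ by functoriality of the Malcev completion, concluding via Lemma \ref{lem grf} that it is an isomorphism.

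For the filtration claims, the easy half is the inclusion $\Gamma_j A \subset F_j A$ valid for any filtration, applied to $A = G_\Q$ and $A = (G/\Gamma_iG)_\Q$. The reverse inclusion is where the natural isomorphism $\mathrm{gr}(H) \otimes \Q \to \mathrm{gr}(H_\Q)$ recalled just above the lemma enters: since $\mathrm{gr}_j(H_\Q)$ is generated by the image of $\Gamma_j H \subset \Gamma_j H_\Q$, one gets $F_j H_\Q = \Gamma_j H_\Q \cdot F_{j+1} H_\Q$, and a downward induction on $j$ (starting at $j = i$, where both sides vanish modulo $F_i$) yields $F_j = \Gamma_j$ on the quotient $H_\Q/F_i H_\Q$ for $H = G$, and similarly $F_j = \Gamma_j$ on $(G/\Gamma_iG)_\Q$ (using that $H = G/\Gamma_iG$ itself has $\Gamma_i H = 1$). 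In particular, both groups are nilpotent of class $<i$.

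Next, I would construct $f_{G,i}$ from the quotient map $q: G \twoheadrightarrow G/\Gamma_iG$. By functoriality of the Malcev completion, $q$ induces a filtered morphism $q_\Q: G_\Q \to (G/\Gamma_iG)_\Q$ satisfying $q_\Q \circ j_G = j_{G/\Gamma_iG} \circ q$. Since the target satisfies $F_i = 1$ by the filtration claim, $q_\Q(F_iG_\Q) = 1$, so $q_\Q$ descends to a group morphism $f_{G,i}: G_\Q/F_iG_\Q \to (G/\Gamma_iG)_\Q$. Commutativity of the displayed triangle is immediate from $q_\Q \circ j_G = j_{G/\Gamma_iG} \circ q$ after descent along $G \twoheadrightarrow G/\Gamma_iG$.

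To conclude that $f_{G,i}$ is an isomorphism, I would invoke Lemma \ref{lem grf}: the hypothesis is satisfied because the source's filtration is trivial for indices $\geq i$, and likewise for the target by the filtration claim. It therefore suffices to check that $\mathrm{gr}(f_{G,i})$ is an isomorphism. Using the natural iso $\mathrm{gr}(H) \otimes \Q \simeq \mathrm{gr}(H_\Q)$ together with the fact that $\Gamma_j G \twoheadrightarrow \Gamma_j(G/\Gamma_iG)$ induces isomorphisms $\mathrm{gr}_j G \simeq \mathrm{gr}_j(G/\Gamma_iG)$ for $j < i$, both $\mathrm{gr}(G_\Q/F_iG_\Q)$ and $\mathrm{gr}((G/\Gamma_iG)_\Q)$ are canonically identified with $\bigoplus_{j<i} (\Gamma_j G/\Gamma_{j+1} G) \otimes \Q$, under which identifications $\mathrm{gr}(f_{G,i})$ becomes the identity. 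The ``in particular'' consequence then follows from $\mathrm{gr}(j_{G/\Gamma_iG})$ being the standard graded isomorphism and $f_{G,i}$ being a filtered isomorphism. The main obstacle is the reverse filtration inclusion step: extracting $F_j H_\Q = \Gamma_j H_\Q \cdot F_{j+1} H_\Q$ requires carefully translating Quillen's construction of the Malcev filtration via the $I$-adic topology on $\widehat{\Q[H]}$ into a statement about commutators and associated graded pieces.
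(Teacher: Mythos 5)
Your proof is correct and follows the same overall route as the paper: construct $f_{G,i}$ by functoriality of the Malcev completion applied to the quotient map $G\to G/\Gamma_iG$, observe that $F_i(G/\Gamma_iG)_\Q=\{1\}$ (via $\mathrm{gr}(G/\Gamma_iG)\otimes\Q\simeq\mathrm{gr}(G/\Gamma_iG)_\Q$ and $\cap_kF_kK_\Q=\{1\}$) so that $F_iG_\Q$ dies and the map descends, then conclude via Lemma \ref{lem grf} by identifying both associated gradeds with $\bigoplus_{j<i}(\Gamma_jG/\Gamma_{j+1}G)\otimes\Q$. The one place you genuinely diverge is the step you yourself flag as the main obstacle: proving that a Malcev filtration which terminates ($F_i=\{1\}$) is the lower central series. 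The paper does not reprove this; it simply invokes proposition 3.5 of \cite{Q1} (p.~276), which is exactly that statement. Your downward induction from $F_jH_\Q=\Gamma_jH_\Q\cdot F_{j+1}H_\Q$ amounts to a proof of that proposition, but to close it you would additionally need that in the uniquely divisible nilpotent quotient the lower central series terms are closed under $\Q$-powers (so that the $\Q$-span of the image of $\Gamma_jH$ in $\mathrm{gr}_jH_\Q$ is actually realized by elements of $\Gamma_jH_\Q$ modulo $F_{j+1}$); that is standard Malcev theory, and citing \cite{Q1} spares you the bookkeeping. Everything else — the descent of $q_\Q$, the commutativity of the triangle, the computation of $\mathrm{gr}(f_{G,i})$, and the ``in particular'' clause — matches the paper's argument.
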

\begin{proof}
By construction the quotient map $G\to G/\Gamma_iG$ gives a commutative diagram of filtered group morphisms:
\[ \begin{tikzcd}
G\arrow[d] \arrow{r}{j_G} & G_\Q \arrow{d}{f_i} \\
G/\Gamma_i G\arrow{r}{j_{G/\Gamma_i G}} & (G/\Gamma_i G)_\Q
\end{tikzcd}\]
where the groups on the left are equipped with their lower central series filtration, the groups on the right are equipped with there "Malcev filtration" and $G\to G/\Gamma_i G$ is the quotient map. Using the isomorphism $\mathrm{gr}( G/\Gamma_i G )\otimes \Q \to \mathrm{gr} (G/\Gamma_i G )_\Q$ on deduces that $\mathrm{gr} (G/\Gamma_i G )_\Q$ is null in degree $l\geq i$. Hence, $F_i(G/\Gamma_i G )_\Q=\{1\}$, since $\cap_{k\geq 0} F_k K_\Q=\{1\}$ for any $K$. This proves that $F_i G_\Q$ lies in the kernel of $f_i$ and that (\cite{Q1}, proposition 3.5, p. 276) $\{F_i(G/\Gamma_i G )_\Q\}_{i\geq 1}$ is the lower central series of $(G/\Gamma_i G)_\Q$. Hence, we have a commutative diagam:
\[ \begin{tikzcd}
G \arrow[d] \arrow[r] & G_\Q/F_i G_\Q \arrow{d}{\bar{f}_i} \\
G/\Gamma_i G \arrow[r] \arrow[ur] & (G/\Gamma_i G)_\Q
\end{tikzcd}\]
where $\bar{f}_i$ is the map of filtered groups (with filtrations as in the beginning of the proposition) induced by $f_i$. To prove our claim, we only need to prove that $\mathrm{gr}\bar{f_i}$ is an isomorphism (Cf. lemma \ref{lem grf}). One proves this by comparing the associated graded of $G, G_\Q/F_i G_\Q, G/\Gamma_i G$ and $(G/\Gamma_i G)_\Q$ using definitions and the isomorphisms $\mathrm{gr} K\otimes \Q\to \mathrm{gr} K_\Q$.
\end{proof}

\begin{definition}\cite{FelHar2}
A Malcev $\Q$-completion of a group $G$ is a group morphism $\varphi: G\to G_c$, where:
\begin{itemize}
\item[1)] the natural map $G_c\to \varprojlim G_c/ \Gamma_i {G_c}$ is an isomorphism,
\item[2)] for $i\geq 1$ the $\Z$-module structure on $\mathrm{gr} G_c$ extends to a $\Q$-vector space structure,
\item[3)] the map $\mathrm{gr}G\otimes \Q\to \gr G_c$ induced by $\varphi$ is an isomorphism.
\end{itemize}
\end{definition}
 
\begin{proposition}\label{prop comp}
If $\varphi: G\to G_c$ is a Malcev $\Q$-completion of a group $G$, then we have a group isomorphism $G_\Q\to G_c$ making the diagram:
\[ \begin{tikzcd}
\: & G_\Q \arrow{d} \\
G \arrow{r}[swap]{\varphi}\arrow{ur}{j_G}&G_c
\end{tikzcd}\]
commutative, and the filtration $\{F_i G_\Q\}_{i\geq 1}$ is the lower central series of $G_\Q$.
\end{proposition}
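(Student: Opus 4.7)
The plan is to show that Quillen's construction $j_G: G \to G_\Q$ itself satisfies the three axioms of a Malcev $\Q$-completion in the sense of the preceding definition, and then to derive the desired isomorphism by a uniqueness/functoriality argument. The central ingredient is the identification $F_k G_\Q = \Gamma_k G_\Q$, which simultaneously yields the last assertion of the proposition.

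For this identification, the inclusion $\Gamma_k G_\Q \subset F_k G_\Q$ is automatic since $\{F_i G_\Q\}_i$ is a filtration. For the reverse inclusion, the previous proposition supplies a filtered isomorphism $f_{G,j}: G_\Q/F_j G_\Q \to (G/\Gamma_j G)_\Q$ and states that the filtration induced by $\{F_i G_\Q\}_i$ on $G_\Q/F_j G_\Q$ is its lower central series. Consequently, for $j \geq k$, the image of $F_k G_\Q$ in $G_\Q/F_j G_\Q$ equals $\Gamma_k(G_\Q/F_j G_\Q)$, which is the image of $\Gamma_k G_\Q$. This yields $F_k G_\Q = \Gamma_k G_\Q \cdot F_j G_\Q$ for every $j \geq k$. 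Passing to the inverse limit using $G_\Q \cong \varprojlim G_\Q/F_j G_\Q$ and $\bigcap_j F_j G_\Q = \{1\}$, one concludes $F_k G_\Q = \Gamma_k G_\Q$.

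With this equality, the three axioms of a Malcev $\Q$-completion hold for $j_G$: axiom $(1)$ is the completeness isomorphism recalled from the text, now identified with the lower central series filtration; axiom $(3)$ is the natural isomorphism $\mathrm{gr}\, G \otimes \Q \cong \mathrm{gr}\, G_\Q$; and axiom $(2)$ is automatic since the right-hand side is visibly a $\Q$-vector space. Applying the same reasoning with $G_c$ in place of $G$, I deduce that $j_{G_c}: G_c \to (G_c)_\Q$ is a filtered isomorphism: both sides are complete with respect to their lower central series (by axiom $(1)$ for $G_c$ and the identification just established for Quillen's construction), and on associated graded the induced map $\mathrm{gr}\, G_c \to \mathrm{gr}\, (G_c)_\Q$ factors as $\mathrm{gr}\, G_c \cong \mathrm{gr}\, G_c \otimes \Q \cong \mathrm{gr}\, (G_c)_\Q$ using axioms $(2)$ and $(3)$ for $G_c$, hence is an isomorphism, and Lemma \ref{lem grf} applies.

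Functoriality of Quillen's construction applied to $\varphi$ produces a filtered morphism $\varphi_\Q: G_\Q \to (G_c)_\Q$ satisfying $\varphi_\Q \circ j_G = j_{G_c} \circ \varphi$, and the desired map is $j_{G_c}^{-1} \circ \varphi_\Q: G_\Q \to G_c$, whose commutativity with the diagram is built in. To see it is an isomorphism, its associated graded factors as $\mathrm{gr}\, G_\Q \cong \mathrm{gr}\, G \otimes \Q \cong \mathrm{gr}\, G_c$, where the second isomorphism is axiom $(3)$ for $\varphi$, and a second application of Lemma \ref{lem grf} concludes. The main obstacle is the inverse limit argument establishing $F_k G_\Q = \Gamma_k G_\Q$, where care is needed with the non-abelian product $\Gamma_k G_\Q \cdot F_j G_\Q$ and with the fact that $\Gamma_k G_\Q$ is not a priori closed in $G_\Q$; once this equality is settled, the remainder of the argument is formal.
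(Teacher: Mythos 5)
Your overall strategy---first proving $F_kG_\Q=\Gamma_kG_\Q$ directly and then deducing the isomorphism by functoriality of Quillen's completion---is genuinely different from the paper's, but it has a gap at its central step. From the previous proposition you correctly obtain $F_kG_\Q=\Gamma_kG_\Q\cdot F_jG_\Q$ for all $j\geq k$, but intersecting over $j$ only yields that $F_kG_\Q$ is the \emph{closure} of $\Gamma_kG_\Q$ in the topology defined by the filtration: the equality $\bigcap_{j}\Gamma_kG_\Q\cdot F_jG_\Q=\Gamma_kG_\Q$ requires $\Gamma_kG_\Q$ to be closed, i.e.\ that whenever $\gamma_j\in\Gamma_kG_\Q$ satisfy $\gamma_{j+1}^{-1}\gamma_j\in F_jG_\Q$, the limit of the $\gamma_j$ again lies in $\Gamma_kG_\Q$. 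This is essentially the non-trivial content of the assertion ``$\{F_iG_\Q\}_i$ is the lower central series of $G_\Q$,'' so as written the argument assumes what it must prove. You flag this yourself as the point ``where care is needed,'' but you do not supply the missing argument, and it is not a routine verification. A secondary problem: you invoke Lemma \ref{lem grf} for $j_{G_c}\colon G_c\to (G_c)_\Q$ and for $j_{G_c}^{-1}\circ\varphi_\Q$, but that lemma assumes the target filtration terminates ($F_kG'=\{1\}$ for some $k$), which fails for these groups; you would need a completed variant (graded isomorphism plus completeness and separatedness of both sides implies isomorphism), which is true but must be stated and justified, e.g.\ by reduction to the quotients.

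The paper sidesteps the closedness issue entirely by working level by level with the nilpotent quotients: it uses that $G_c/\Gamma_iG_c$ is nilpotent and uniquely divisible together with the universal property of $G/\Gamma_iG\to(G/\Gamma_iG)_\Q\cong G_\Q/F_iG_\Q$ to produce compatible filtered maps $f_i\colon G_\Q/F_iG_\Q\to G_c/\Gamma_iG_c$; these are isomorphisms by Lemma \ref{lem grf}, whose hypotheses \emph{are} satisfied at each finite level, and passing to the inverse limit gives the isomorphism $f$. The identification $F_iG_\Q=\Gamma_iG_\Q$ is then a consequence of $f$ being a group isomorphism carrying $F_iG_\Q$ onto $\Gamma_iG_c$, rather than an input. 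If you want to keep your order of argument, you would have to prove closedness of $\Gamma_kG_\Q$ independently, which in effect reproduces the paper's quotient-by-quotient analysis.
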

\begin{proof}
The group $G_c/\Gamma_i G_c$ is nilpotent and the $\Z$-module structure of $\mathrm{gr} (G_c/\Gamma_i G_c)$ extends to a $\Q$-vector space structure. Hence, $ G_c/\Gamma_i G_c$ is uniquely divisble (\cite{Q1}, $(ii)$ of corollary 3.7, p. 277-278).
The map $j_{G/\Gamma_i G}: \Gamma_iG\to (G/ \Gamma_iG)_\Q$ is universal with respect to morphisms from $G/ \Gamma_iG$ into nilpotent uniquely divisible groups (\cite{Q1}, corollary 3.8, p. 278). Hence, by the previous proposition, we have a commutative diagram of filtered group morphisms:
\[ \begin{tikzcd}
\: & G_\Q/F_i G_\Q \arrow{d}{f_i} \\
G/\Gamma_i G\arrow[r] \arrow[ur] & G_c/\Gamma_i G_c
\end{tikzcd}\]
Since $\mathrm{gr}G\otimes \Q \to \mathrm{gr}G_c$ is an isomorphism, the similar map obtained from $G/\Gamma_i G \to G_c/\Gamma_i G_c$ is also an isomorphism. Moreover, we have proved in the previous proposition that $\mathrm{gr} G/ \Gamma_i G\otimes \Q \to \mathrm{gr} G_\Q/F_i G_\Q$ is an isomorphism. Hence, $\mathrm{gr}(f_i)$ is an isomorphism. This proves that $f_i:G_\Q/F_i G_\Q\to G_c \Gamma_i G_c$ is an isomorphism (Cf. lemma \ref{lem grf}). The isomorphisms $f_i:G_\Q/F_i G_\Q\to G_c \Gamma_i G_c$ for different $i$'s are compatible, since they are obtained using a universal map. This gives an isomorphism $f: G_\Q\to G_c$, since the natural maps $G_\Q\to \varprojlim G_\Q/F_i G_\Q$ and $G_c\to \varprojlim G_c/\Gamma_i G_c$ are isomorphisms. The assertion on the filtration follows, since: $F_iG_\Q=\mathrm{Ker}(G_\Q\to G_\Q/F_iG_\Q)$, $\Gamma_i G_c=\mathrm{Ker}( G_c\to G_c/\Gamma_iG_c)$ and $f$ is obtained out of the isomorphisms $f_i$.
\end{proof}
\begin{remark}\label{rmk correspondance}
The above proposition establishes the correspondence between the construction of $G_\Q$ from \cite{Q1} and the Malcev $\Q$-completion of $G$ in the sense of \cite{FelHar2}, for $G$ admitting a Malcev $\Q$-completion. Malcev $\Q$-completions are constructed in \cite{FelHar2} (theorem 7.5, p. 214) for $G$ admitting an abelianization of finite rank using classifying spaces (see also proposition \ref{Mal comp FHT}).
\end{remark}
For $(\Lambda V,d)$ a minimal Sullivan algebra with $H^1(\Lambda V)$ finite dimensional, the natural map:
\begin{equation}\label{Malcev LLX}
{L_{\Lambda V}}\to \varprojlim L_{\Lambda V}/ \Gamma_i L_{\Lambda V},\end{equation}
where $L_{\Lambda V}$ is the fundamental Lie algebra of $(\Lambda V,d)$ (definition given in \ref{S41}), is an isomorphism, and $L_{\Lambda V}/ \Gamma_2 L_{\Lambda V}$ is isomorphic to the finite dimensional vector space $H^1(\Lambda V)$ (\cite{FelHar2},§2.3). Now consider the enveloping algebra $UL_{\Lambda V}$ of ${L_{\Lambda V}}$, denote by $I_{L_{\Lambda V}}$ its augmentation ideal and set $\widehat{U}{L_{\Lambda V}}= \varprojlim UL_{L_{\Lambda V}}/I^i_{L_{\Lambda V}}$. The algebra $\widehat{U}{L_{\Lambda V}}$ is a complete Hopf algebra. One associates to $L_{\Lambda V}$ the group $G_{L_{\Lambda V}}$ (\cite{FelHar2}, §2.4, definition p. 59) of group-like elements of $\widehat{U}L_{\Lambda V}$. Moreover, the natural map $L_{\Lambda V} \to \widehat{U}{L_{\Lambda V}}$ induces an isomorphism between $L_{\Lambda V}$ and the Lie algebra of primitive elements of $\widehat{U}{L_{\Lambda V}}$ (\cite{FelHar2}, corollary 2.3, p. 59).\\\\
For $\Lambda V$ a minimal Sullivan algebra the fundamental group of $\Lambda V$ is $\pi_1(\Lambda V)=G_{L_{\Lambda V}}$. For $X$ a path connected space, the quasi-isomorphism $\Lambda V_X \to A_{PL}(X)$ induces a group morphism $\phi_X:\pi_1(X)\to \pi_1(\Lambda V_X)$.
\begin{proposition}\cite{FelHar2}\label{Mal comp FHT}
If $X$ is path connected and $H^1(X,\Q)$ is finite dimensional then $\varphi_X : \pi_1X\to \pi_1(\Lambda V_X)$ is a Malcev $\Q$-completion.
\end{proposition}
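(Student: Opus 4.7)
The plan is to verify directly the three conditions in the definition of Malcev $\Q$-completion for the map $\varphi_X : \pi_1X \to \pi_1(\Lambda V_X) = G_{L_{\Lambda V_X}}$. Conditions (1) and (2) are properties of the target $G_{L_{\Lambda V_X}}$ and follow from Quillen's theory applied to the complete Hopf algebra $\widehat{U}L_{\Lambda V_X}$. Indeed, since $L_{\Lambda V_X}$ is complete with respect to its lower central series (Cf. \eqref{Malcev LLX}) and the successive quotients $L_{\Lambda V_X}/\Gamma_i L_{\Lambda V_X}$ are finite-dimensional $\Q$-vector spaces, the algebra $\widehat{U}L_{\Lambda V_X}$ lies in $\mathrm{CHA}$. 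The group $G_{L_{\Lambda V_X}}$ of its group-like elements is then a Malcev group in Quillen's sense, and by the same argument as in the proposition preceding \ref{prop comp} its Quillen filtration coincides with its own lower central series. This gives $G_{L_{\Lambda V_X}} \simeq \varprojlim G_{L_{\Lambda V_X}}/\Gamma_i G_{L_{\Lambda V_X}}$, while the exponential identifies $\mathrm{gr}\, G_{L_{\Lambda V_X}}$ with $\mathrm{gr}\, L_{\Lambda V_X}$, which is canonically a $\Q$-vector space.

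The main work is condition (3): the induced map $\mathrm{gr}\,\pi_1 X \otimes \Q \to \mathrm{gr}\, G_{L_{\Lambda V_X}}$ is an isomorphism. I would proceed inductively along the tower of Hirsch extensions defining the $1$-minimal model $\Lambda V_1 \subset \Lambda V_X$ (recall from remark \ref{rmk fund} that $L_{\Lambda V_X} = L_{\Lambda V_1}$). Each Hirsch stage $\Lambda V_1(k)$ has finite-dimensional nilpotent fundamental Lie algebra $L_{\Lambda V_1(k)}$, with corresponding finite-dimensional nilpotent Malcev group $G_{L_{\Lambda V_1(k)}}$; the map $\varphi_X$ induces a factorization of $\pi_1 X \to G_{L_{\Lambda V_1(k)}}$ through a nilpotent quotient of $\pi_1 X$. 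One then shows by induction on $k$ that this factorization is a Malcev $\Q$-completion of the relevant nilpotent quotient. The inductive step uses the universal property of the Hirsch extension $\Lambda V_1(k-1) \to \Lambda V_1(k)$ together with the defining conditions that $H^1(m_1)$ is an isomorphism and $H^2(m_1)$ is injective, which force the correspondence between central extensions of nilpotent Lie algebras and of nilpotent groups over $\Q$ to match at each stage. Passing to the inverse limit in $k$, using $G_{L_{\Lambda V_X}} \simeq \varprojlim G_{L_{\Lambda V_1(k)}}$, produces the claimed isomorphism of associated gradeds.

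The hard part is precisely this inductive identification: at each stage, one must match the central extension of nilpotent Lie algebras determined by the Hirsch extension with a rational central extension of the corresponding nilpotent quotient of $\pi_1 X$. In \cite{FelHar2}, theorem 7.5, this is carried out geometrically using classifying spaces of nilpotent minimal Sullivan algebras, which directly produces an explicit Malcev $\Q$-completion of $\pi_1 X$; the identification of this construction with $G_{L_{\Lambda V_X}}$ then follows formally from the universality built into proposition \ref{prop comp}. An alternative route, closer to Sullivan's original approach, would compare the continuous dual of the bar construction on $\Lambda V_1$ with the $I$-adic completion of the group algebra $\Q[\pi_1 X]$, recovering $G_{L_{\Lambda V_X}}$ as the group-like elements on one side and the Malcev $\Q$-completion of $\pi_1 X$ on the other.
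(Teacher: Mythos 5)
The paper does not actually prove this proposition: it is imported verbatim from \cite{FelHar2}, the only justification given being the sentence ``This is corollary 7.4 (p.214) of the corresponding reference.'' Your outline — verifying conditions (1)–(2) from Quillen's theory of the complete Hopf algebra $\widehat{U}L_{\Lambda V_X}$ (consistent with the finite-dimensionality arguments in the paper's appendix) and reducing condition (3) to the Hirsch-tower/classifying-space construction of \cite{FelHar2}, Theorem 7.5 — is a faithful summary of how the cited reference establishes the result, and since you, like the paper, ultimately rest the one genuinely hard step on that same reference, there is no substantive divergence to report.
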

This is corollary 7.4 (p.214) of the corresponding reference.\\\\
\textbf{Proof of proposition \ref{prop HLie Mal}}\: \:We have a path connected topological space $X$ with $H^1(X,\Q)$ finite dimensional. We want to show that the Malcev Lie algebra $L_X$ of $\pi_1X$ is isomorphic to the fundamental Lie alebra $L_{\Lambda V_X}$, and that the Malcev filtration on $L_X$ is the lower central series. The Lie algebras and groups $L_{\Lambda V_X},\pi_1(\Lambda V_X), L_X$ and $(\pi_1X)_\Q$ are the underlying Lie algebras and groups of $\mathcal{P}_F(\hat{U}L_{\Lambda V_X}),\mathcal{G}_F(\hat{U}L_{\Lambda V_X}),\mathcal{P}_F(\Q[\pi_1X]\:\hat{})$ and $\mathcal{G}_F(\Q[\pi_1X]\:\hat{})$. Moreover, by propositions \ref{prop comp} and \ref{Mal comp FHT}, $(\pi_1X)_\Q \simeq \pi_1(\Lambda V_X)$ and the filtration on $\mathcal{G}_F(\Q[\pi_1X]\:\hat{})$ is the lower central series. Since the functors $\mathcal{P}_F$ and $\mathcal{G}_F$ are category equivalences (\cite{Q1}, theorem 3.3, p. 275), in order to prove the proposition it suffices to prove that filtrations of $\mathcal{G}_F(\hat{U}L_{\Lambda V_X})$ and $\mathcal{P}_F(\Q[\pi_1X]\:\hat{})$ are the corresponding lower central series.
\begin{proposition}\label{prop CHA}
Let $H$ be a complete Hopf algebra and $I_H$ its augmentation ideal. If $I_H/I_H^2$ is finite dimensional, then the filtrations of $\mathcal{P}_F(H)$ and $\mathcal{G}_F(H)$ are the corresponding lower central series.
\end{proposition}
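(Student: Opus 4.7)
The plan is to show that the hypothesis makes the associated graded of $L:=\mathcal{P}_F(H)$ (with respect to its Quillen filtration) a graded Lie algebra finitely generated in degree one, and then to match the Quillen filtration with the lower central series by the same kind of rearrangement used in Lemma~\ref{lem filgrL}. The group case will follow via the $\exp$/$\log$ correspondence between $L$ and $\mathcal{G}_F(H)$.

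First, I would note that the $I$-adic associated graded $\mathrm{gr}_I H = \bigoplus_{k\geq 0} I_H^k/I_H^{k+1}$ is a connected graded cocommutative Hopf algebra over $\Q$ generated in degree one as an algebra, since $I_H^k$ is spanned by $k$-fold products of elements of $I_H$. By the graded Milnor--Moore theorem, $\mathrm{gr}_I H \simeq U(\mathcal{L})$ with $\mathcal{L} := \mathrm{prim}(\mathrm{gr}_I H)$, and primitives commute with passage to the associated graded, so $\mathcal{L} \simeq \mathrm{gr}\,L$. Hence $\mathrm{gr}\,L$ is a graded Lie algebra generated in degree one by the finite-dimensional space $\mathcal{L}_1 \hookrightarrow I_H/I_H^2$, and each $\mathcal{L}_k$ is finite-dimensional.

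Next, lifting the graded identity $[\mathcal{L}_1,\mathcal{L}_{i-1}] = \mathcal{L}_i$ back to the filtered setting gives $F_iL = [L,F_{i-1}L] + F_{i+1}L$. Combined with $F_1L = L = \Gamma_1L$ and the general inclusion $\Gamma_iL \subset F_iL$, an induction on $i$ yields the approximate identity $F_iL = \Gamma_iL + F_jL$ for all $j \geq i$. To upgrade this to the exact equality $F_iL = \Gamma_iL$, I would fix finitely many lifts $\tilde{x}_1,\dots,\tilde{x}_m \in L$ of a basis of $\mathcal{L}_1$ and adapt the rearrangement argument of Lemma~\ref{lem filgrL}: contributions to $F_iL$ coming from arbitrarily deep filtration pieces can be grouped by the finitely many tuples $(\tilde{x}_{j_1},\dots,\tilde{x}_{j_{i-1}})$ and their tails summed inside a single nested bracket $\mathrm{ad}(\tilde{x}_{j_1})\cdots \mathrm{ad}(\tilde{x}_{j_{i-1}})(y)$ for a convergent $y \in L$. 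This expresses any element of $F_iL$ as a finite sum of $(i-1)$-fold iterated brackets, hence as an element of $\Gamma_iL$.

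For the group $G := \mathcal{G}_F(H)$, I would transfer the Lie-algebra result through the exponential bijection $\exp \colon L \to G$, which sends $F_iL$ to $F_iG$ and, via BCH, aligns group commutators with Lie brackets modulo higher filtration. Combined with $\mathrm{gr}\,G \otimes \Q \simeq \mathrm{gr}\,L$ being finitely generated in degree one, the analogous inductive argument, with group commutators in place of Lie brackets, produces $F_iG = \Gamma_iG$. The real difficulty is precisely the passage from the approximate equality $F_iL = \Gamma_iL + F_jL$ to the exact one: in general $\Gamma_iL$ need not be closed in the filtration topology, so density is strictly weaker than equality, and the hypothesis $\dim_{\Q} I_H/I_H^2 < \infty$ is what activates the rearrangement of Lemma~\ref{lem filgrL}, forcing the infinite sums arising from completeness to collapse into a finite sum of iterated brackets.
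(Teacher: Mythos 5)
Your route is genuinely different from the paper's: the paper chooses lifts $x_1,\dots,x_m\in H$ of a basis of $I_H/I_H^2$, uses the resulting surjection from the free complete Hopf algebra $\Q\langle\langle x_l\rangle\rangle$ onto $H$ to reduce everything to the free case, and then handles that case explicitly --- Lemma \ref{lem filgrL} for $\mathcal{P}_F$, and the identification $\Q\langle\langle x_l\rangle\rangle\simeq \Q[K]\,\hat{}$ for $K$ free together with Propositions \ref{prop comp} and \ref{Mal comp FHT} (applied to a bouquet of circles) for $\mathcal{G}_F$. You instead work intrinsically in $H$, extracting from Milnor--Moore that $\mathrm{gr}\,\mathcal{P}_F(H)$ is generated in degree one by the finite-dimensional $I_H/I_H^2$ and then running a filtered version of the rearrangement of Lemma \ref{lem filgrL}. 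For the Lie algebra half this works: the peeling $F_jL=\sum_{\underline{\alpha}}\mathrm{ad}(\tilde x_{\alpha_1})\cdots\mathrm{ad}(\tilde x_{\alpha_{i-1}})(F_{j-i+1}L)+F_{j+1}L$ can be iterated and the tails resummed inside the innermost slot because the bracket is \emph{bilinear} and continuous for the filtration topology and $L$ is complete and Hausdorff; this gives $F_iL\subset\Gamma_iL$ exactly, not just densely. (You should still justify $\mathrm{gr}\,\mathcal{P}_F(H)\simeq \mathcal{P}(\mathrm{gr}\,H)$, which is part of Quillen's structure theory rather than a formality, but that is a citable point.)

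The genuine gap is the group half. The step you call a routine transfer is precisely where the Lie-algebra argument does not carry over: group commutators are not multiplicative in their arguments, $(a,bc)\neq(a,b)(a,c)$, so the infinite product of correction terms produced by the peeling cannot simply be collapsed into a single iterated commutator $(g_{\alpha_1},(\dots,(g_{\alpha_{i-1}},\prod_k h^{(k)})\dots))$; each regrouping spawns new commutators of higher weight that must themselves be absorbed by a further convergent approximation. This can be organized into a correct proof (it is the standard argument showing lower central terms of finitely generated pronilpotent groups are closed), but as stated it is asserted rather than proved, and BCH only gives $(\exp\alpha,\exp\beta)=\exp([\alpha,\beta]+\text{h.o.t.})$, so $F_iG=\exp(\Gamma_iL)$ does not formally yield $F_iG=\Gamma_iG$ either. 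To close the gap you should either carry out that iterative commutator-correction argument, or do what the paper does: push the group statement onto the free case, where it follows from the topological input of Propositions \ref{prop comp} and \ref{Mal comp FHT}.
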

\begin{proof}
Let $x_1,\dots,x_m$ be elements of $H$ lifting a basis of $I_H/I_H^2$ and let $H':=\Q\langle \langle \{x_l\}_l \rangle \rangle$ be the complete Hopf algebra of formal series, with coefficients in $\Q$, in the non commuting variables $\{x_l\}_l$. One has a unique complete Hopf algebra morphism $H' \to H $ given by $x_l \mapsto x_l$. The morphism maps $I_{H'}$ onto $I_H$. By definition:
$$ F_i \mathcal{P}_F(H)= I_H^i \cap \mathcal{P}_F(H) \quad, \quad F_i \mathcal{G}_F(H)=(1+ I_H^i) \cap \mathcal{G}_F(H),$$
where $ \{F_i \mathcal{P}_F(H)\}$ and $\{F_i \mathcal{G}_F(H)\}_i $ are the filtrations of $\mathcal{P}_F(H)$ and $\mathcal{G}_F(H)$. Hence, we only need to prove the proposition for $H'$ and we will do so. Denote by $L$ the free Lie algebra on $\{x_l\}_l$ and assume that $x_l$ has degree $1$. We have $L=\oplus_{i\geq 1} L_i$, where $L_i$ is the degree $i$ component of $L$. The underlying Lie algebra of $\mathcal{P}_F(H')$ correspond the degree completion $\hat{L}=\prod_{i\geq 1} L_i$ of $L$ (\cite{Q1}, example 2.11, p. 272), and the filtration on $\mathcal{P}_F(H')$ is given by $F_i \hat{L}=\prod_{j\geq i} L_j \subset \hat{L}$. By lemma \ref{lem filgrL}, $F_i \hat{L}=\Gamma_i \hat{L}$. This proves the part concerning $\mathcal{P}_F(H)$ of the proposition. We now consider $\mathcal{G}_F(H')$. We have an isomorphism of complete Hopf algebras $H'\simeq \Q[K]\:\hat{}$, where $K$ is the free groups on $\{x_l\}_l$ (\cite{Q1}, example 2.11, p. 272). Hence, $\mathcal{G}_F(H')\simeq \mathcal{G}_F(\Q[K]\:\hat{})$ (as filtered groups). By propositions \ref{prop comp} and \ref{Mal comp FHT} (for $X$ a bouquet of $m$ circles), the filtration of $\mathcal{G}_F(\Q[K]\:\hat{})$ is the lower central series. This proves that the filtration of $\mathcal{G}_F(H')$ is the lower central series. We have proved the proposition.
\end{proof}
We note that for $H$ a complete Hopf algebra we have isomorphisms: $\mathrm{gr}_1 \mathcal{P}_F(H) \simeq I_H/I_H^2 \simeq \mathrm{gr}_1\mathcal{G}_F(H)$ (\cite{Q1}, example 3.2, p. 275).
\begin{proposition}
The filtration of $\mathcal{G}_F(\hat{U}L_{\Lambda V_X})$ and $\mathcal{P}_F(\Q[\pi_1X]\:\hat{})$ are the corresponding lower central series.
\end{proposition}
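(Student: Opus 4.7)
The plan is to verify that each of the two complete Hopf algebras, $H_1 := \hat{U}L_{\Lambda V_X}$ and $H_2 := \Q[\pi_1X]\:\hat{}$, satisfies the hypothesis of Proposition \ref{prop CHA}, namely that its indecomposable space $I/I^2$ is finite dimensional; the proposition then yields the desired identification of the two filtrations with the respective lower central series, and this is precisely what was reduced to just before the statement.

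For $H_1 = \hat{U}L_{\Lambda V_X}$, I would use the isomorphism $\mathrm{gr}_1 \mathcal{P}_F(H) \simeq I_H/I_H^2$ recalled in the note immediately preceding the proposition. The underlying Lie algebra of $\mathcal{P}_F(H_1)$ is $L_{\Lambda V_X}$, and the text following equation (\ref{Malcev LLX}) gives $L_{\Lambda V_X}/F_2 L_{\Lambda V_X} \simeq H^1(\Lambda V_X) \simeq H^1(X,\Q)$, so finite dimensionality of $I_{H_1}/I_{H_1}^2$ is immediate from the standing hypothesis on $H^1(X,\Q)$.

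For $H_2 = \Q[\pi_1X]\:\hat{}$, a standard group-algebra computation together with the fact that $I$-adic completion does not alter $I/I^2$ gives $I_{H_2}/I_{H_2}^2 \simeq (\pi_1 X)^{\mathrm{ab}} \otimes_{\Z} \Q \simeq H_1(X,\Q)$, the last isomorphism being Hurewicz. Finite dimensionality then follows from the identification $H^1(X,\Q) \simeq \Hom_\Q(H_1(X,\Q),\Q)$ (universal coefficients over a field) and the hypothesis on $H^1(X,\Q)$.

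With both finite-dimensionality checks in hand, Proposition \ref{prop CHA} applied to $H_1$ identifies the filtration on $\mathcal{G}_F(\hat{U}L_{\Lambda V_X})$ with its lower central series, and applied to $H_2$ does the same for $\mathcal{P}_F(\Q[\pi_1X]\:\hat{})$, completing the proof. No real obstacle is anticipated; the argument is essentially a direct specialization of the previous proposition to the two relevant Hopf algebras, the only substantive step being the translation of the finite-dimensionality hypothesis on $H^1(X,\Q)$ into the required finiteness of $I/I^2$ on each side.
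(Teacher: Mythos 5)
Your proof is correct and follows essentially the same route as the paper: reduce via Proposition \ref{prop CHA} to the finite dimensionality of $I_H/I_H^2$ for the two complete Hopf algebras, identify $I/I^2$ with $H_1(X,\Q)$ (equivalently $(\mathrm{gr}_1\pi_1X)\otimes\Q$) on the group side and with the quotient of $L_{\Lambda V_X}$ by the second filtration term on the Lie side, and conclude from the hypothesis on $H^1(X,\Q)$. The only cosmetic difference is that the paper's cited fact is $L_{\Lambda V_X}/\Gamma_2 L_{\Lambda V_X}\simeq H^1(\Lambda V_X)$ and it notes $\Gamma_2\subset F_2$ to pass to the quotient by $F_2$, whereas you quote the $F_2$-quotient directly; this does not affect the argument.
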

\begin{proof}
In light of the previous proposition, we only prove that $I_H/I_H^2$ is finite dimensional for $H=\Q[\pi_1X]\:\hat{}$ and $H=\hat{U}L_{\Lambda V_X}$. We have isomorphisms $H^1(X,\Q) \simeq (\mathrm{gr}_1 \pi_1X) \otimes \Q \simeq \mathrm{gr}_1 (\pi_1 X)_\Q \simeq I_{\Q[\pi_1X]\:\hat{}}/I_{\Q[\pi_1X]\:\hat{}}^2$. This prove that $I_{\Q[\pi_1X]\:\hat{}}/I_{\Q[\pi_1X]\:\hat{}}^2$ is finite dimensional, since $H^1(X,\Q)$ is finite dimensional. The vector space $I_{\hat{U}L_{\Lambda V_X}}/I_{\hat{U}L_{\Lambda V_X}}^2 \simeq \mathrm{gr}_1 \mathcal{P}_F(\hat{U}L_{\Lambda V_X}) $ is isomorphic to the quotient of $L_{\Lambda V_X} $ by the second term of the filtration. The second term of the filtration contains $\Gamma_2 L_\Lambda V_X$ and hence it suffice to prove that $L_{\Lambda V_X}/\Gamma_2 L_{\Lambda V_X}$ is finite dimensional. Since $H^1(X,\Q)$ is finite dimensional, we have (as seen previously) $L_{\Lambda V_X}/\Gamma_2 L_{\Lambda V_X} \simeq H^1(X,\Q)$ and hence $L_{\Lambda V_X}/\Gamma_2 L_{\Lambda V_X}$ is finite dimensional. We have proved the proposition.
\end{proof}
This proves proposition \ref{prop HLie Mal} of subsection \ref{S41}, as we have discussed in the beginning of the proof.

\bibliographystyle{alpha}
\bibliography{Biblio}{}

\end{document}